\documentclass[11pt,a4paper,reqno]{amsart}

\renewcommand{\ldots}{\hspace{0.9pt}.\hspace{0.3pt}.\hspace{0.3pt}.\hspace{1.5pt}}

\newcommand{\falling}[2]{#1^{\underline{#2}}}

\newcommand{\HIDE}[1]{#1} 
\usepackage{xstring}
\newcommand{\longreal}[1]{%
  \StrLeft{#1}{12}%
} 

\usepackage[utf8]{inputenc}
\usepackage[T1]{fontenc}
\usepackage{mathrsfs}

\usepackage{amsmath}
\usepackage{amsthm}
\usepackage{amssymb}
\usepackage{mathtools}

\usepackage[foot]{amsaddr}

\usepackage{graphicx}
\usepackage{comment}
\usepackage{tikz}
\usetikzlibrary{calc, shapes, chains, positioning, fit}
\usepackage{verbatim}

\usepackage[margin=1in]{geometry}

\definecolor{darkblue}{rgb}{0,0,0.5}
\definecolor{cerisepink}{rgb}{0.93, 0.23, 0.51}
\definecolor{chocolate(traditional)}{rgb}{0.48, 0.25, 0.0}
\definecolor{chromeyellow}{rgb}{1.0, 0.8, 0.0}
\definecolor{bluegreen}{rgb}{0.0, 0.8, 0.9}
\definecolor{darkorange}{rgb}{0.8, 0.4, 0.0}
\definecolor{aonthergreen}{rgb}{0.0, 0.8, 0.4}
\definecolor{harvard-crimson}{rgb}{0.64, 0.06, 0.2}
\definecolor{dark-red}{rgb}{0.39, 0.15, 0.07}

\usepackage[unicode,
    bookmarksopen=true,
    bookmarksopenlevel=1,
    colorlinks=true,
    linkcolor=darkblue,
    linktoc=page,
    citecolor=darkblue,
    urlcolor=harvard-crimson,
    hypertexnames=false
]{hyperref}
\usepackage[capitalise]{cleveref}

\usepackage[shortlabels]{enumitem}
\setlist{topsep=.5em, itemsep=.3em}
\usepackage{subcaption}

\usepackage{thmtools}
\usepackage{thm-restate}

\usepackage[normalem]{ulem}
\usepackage{etoolbox}
\usepackage{xparse} 
\usepackage[normalem]{ulem}

\makeatletter
\patchcmd{\subsection}{\bfseries}{\bfseries\color{dark-red}}{}{} 
\patchcmd{\@begintheorem}{\thmhead}{\color{dark-red}\thmhead}{}{} 
\makeatother

\crefname{equation}{}{}
\crefname{theorem}{Theorem}{Theorems}
\crefname{claim}{Claim}{Claims}
\crefname{lemma}{Lemma}{Lemmas}
\crefname{corollary}{Corollary}{Corollaries}

\numberwithin{equation}{section}

\declaretheorem[name=Theorem, numberwithin=section]{theorem}

\declaretheorem[name=Lemma, sibling=theorem]{lemma}
\declaretheorem[name=Proposition, sibling=theorem]{proposition}
\declaretheorem[name=Conjecture, sibling=theorem]{conjecture}

\declaretheorem[name=Notation, numbered=no, style=definition]{notation*} 
\declaretheorem[name=Organisation, numbered=no, style=definition]{organization*} 

\declaretheorem[name=Claim, numberwithin=theorem]{claim}

\newenvironment{claimproof}[1][Proof of Claim]{%
  \begin{proof}[#1]%
}{%
  \end{proof}%
}

\newcommand{\eps}{\ensuremath{\varepsilon}}
\newcommand{\naturals}{\ensuremath{\mathbb N}}

\newcommand{\reals}{\ensuremath{\mathbb R}}

\newcommand{\cF}{\protect{\ensuremath{\mathcal F}}}
\newcommand{\cG}{\protect{\ensuremath{\mathcal G}}}

\ExplSyntaxOn
\cs_set:Npn \splitatcommas #1 {
  \seq_set_split:Nnn \l_tmpa_seq { , } { #1 }
  \seq_use:Nn \l_tmpa_seq { , \penalty0 \hspace{0pt plus 1em} }
}
\ExplSyntaxOff

\newcommand{\set}[1]{\{#1\}}

\newcommand{\paren}[1]{\left(#1\right)}

\NewDocumentCommand{\pr}{g}{%
  \IfNoValueTF{#1}{\mathbb{P}}{\mathbb{P}\left[#1\right]}%
}
\NewDocumentCommand{\ex}{g}{%
  \IfNoValueTF{#1}{\mathbb{E}}{\mathbb{E}\left(#1\right)}%
}
\NewDocumentCommand{\var}{g}{%
  \IfNoValueTF{#1}{\textup{\textrm{Var}}}{\textup{\textrm{Var}}\left(#1\right)}%
}
\NewDocumentCommand{\extremal}{g}{%
  \IfNoValueTF{#1}{\textup{\textrm{ex}}}{\textup{\textrm{ex}}\left(#1\right)}%
}

\newcommand{\APPENDIX}[1]{}
\newcommand{\NOTAPPENDIX}[1]{#1}
\renewcommand{\APPENDIX}[1]{#1}  
\renewcommand{\NOTAPPENDIX}[1]{} 

\AtBeginDocument{%
   \def\MR#1{}
}

\newcommand{\blow}[2]{\ifstrempty{#2}{#1{()}}{#1(#2)}}
    
\usepackage{bbm}
\usepackage{bm}
\newcommand{\B}[1]{{\boldsymbol #1}}
\newcommand{\I}[1]{{\mathbbm #1}}
\newcommand{\V}[1]{\bm{#1}}
\newcommand{\C}[1]{{\protect\mathcal{#1}}}
\renewcommand{\O}[1]{\overline{#1}}
\newcommand{\p}{p}
\renewcommand{\P}{P}
\newcommand{\N}{N}

\newcommand{\dedit}{\delta_{\mathrm{edit}}}

\usepackage{ifthen}
\newcommand{\LPSS}[1]{\ifthenelse{\equal{#1}{}}
{\cite{LiuPikhurkoSharifzadehStaden23}}
{\cite[#1]{LiuPikhurkoSharifzadehStaden23}}}

\newcommand{\hide}[1]{}

\usepackage{pgffor}
\usepackage{pgf}

\makeatletter
\newdimen\probgraph@tmpa
\newdimen\probgraph@tmpb

\pgfkeys{
  /probgraph/.is family, /probgraph,
  layout radius/.initial = 5.3mm,
  node radius/.initial   = 3.5mm,
  start angle/.initial   = 0,
  edge width factor/.initial = 0.9,
  node line width/.initial = 0.1pt,
  edge opacity/.initial  = 1
}

\newcommand{\ProbGraph}[3]{
  \begingroup
  \def\probgraph@K{0} 
  \foreach[count=\i from 1] \p in {#1}{
    \xdef\probgraph@K{\i}
  }

  \begin{tikzpicture}[baseline={(0,0)}, outer sep=0pt, inner sep=0pt]
    \foreach[count=\i from 0] \p in {#1}{ 
      \pgfmathsetmacro{\ang}{\pgfkeysvalueof{/probgraph/start angle} - 360*\i/(\probgraph@K)}
      \coordinate (probgraph-v\i) at (\ang:\pgfkeysvalueof{/probgraph/layout radius});
      \pgfmathsetmacro{\s}{sqrt(\p)}
      \pgfmathsetlengthmacro{\r}{\s*\pgfkeysvalueof{/probgraph/node radius}}
      \expandafter\xdef\csname probgraph@r@\i\endcsname{\r}
      \expandafter\xdef\csname probgraph@fill@\i\endcsname{white}
    }
    \foreach \ii/\jj/\vv in {#3}{
      \pgfmathtruncatemacro{\pct}{round(100*(\vv))}
      \ifnum\ii=\jj\relax 
        \expandafter\xdef\csname probgraph@fill@\ii\endcsname{black!\pct}
      \else 
        \probgraph@tmpa=\csname probgraph@r@\ii\endcsname\relax
        \probgraph@tmpb=\csname probgraph@r@\jj\endcsname\relax
        \ifdim\probgraph@tmpa<\probgraph@tmpb \probgraph@tmpb=\probgraph@tmpa\fi
        \edef\minrad{\the\probgraph@tmpb}
        \pgfmathsetlengthmacro{\lw}{\pgfkeysvalueof{/probgraph/edge width factor}*\minrad}
        \draw[
          line cap=round,
          line width=\lw,
          draw=black!\pct,
          opacity=\pgfkeysvalueof{/probgraph/edge opacity}
        ] (probgraph-v\ii) -- (probgraph-v\jj);
      \fi
    }
    \foreach[count=\i from 0] \p in {#1}{ 
      \filldraw[
        draw=black,
        line width=\pgfkeysvalueof{/probgraph/node line width},
        fill=\csname probgraph@fill@\i\endcsname
      ] (probgraph-v\i) circle[radius=\csname probgraph@r@\i\endcsname];
    }
    \foreach \p in {#2}{ 
      \node[font=\tiny, inner sep=0pt, outer sep=0pt] at (probgraph-v\p) {R};
    }
  \end{tikzpicture}
  \endgroup
}
\makeatother

\tikzset{
  vertex/.style={
    circle,
    draw,
    minimum size=0.2em,
    inner sep=0pt
  },
  fill overlay/.style = {circle, 
    fill=black, 
    minimum size=0.2em,
    inner sep=0pt
  },
  edge/.style={}
}

\newcommand{\DrawGraph}[2]{
\ifnum#1>0
  \begin{tikzpicture}[baseline={($(current bounding box.center)$)}]
    \def\graphradius{0.4cm}
    \foreach \i in {0, ..., \the\numexpr #1-1 \relax} {
      \pgfmathsetmacro{\angle}{90 - \i * (360/#1)}
      \node[vertex] (v\i) at (\angle:\graphradius) {};
    }
    \foreach \u / \v in {#2} {
      \ifnum\u=\v
        \node[fill overlay] at (v\u) {};
      \else
        \draw[edge] (v\u) -- (v\v);
      \fi
    }
  \end{tikzpicture}
\else 
\hspace{5mm}
\fi
}

\newcommand{\DrawRecursiveGraph}[3]{
\ifnum#1>0
  \begin{tikzpicture}[baseline={($(current bounding box.center)$)}]
    \def\graphradius{0.4cm}
    \foreach \i in {0, ..., \the\numexpr #1-1 \relax} {
      \pgfmathsetmacro{\angle}{90 - \i * (360/#1)}
      \node[vertex] (v\i) at (\angle:\graphradius) {};
      \coordinate (rv\i) at (\angle:0.56cm);
    }
    \foreach \u / \v in {#2} {
      \draw[edge] (v\u) -- (v\v);
    }
    \foreach \u in {#3} {
      \node[font=\tiny, inner sep=0pt, outer sep=0pt] at (rv\u) {$R$};
    }
  \end{tikzpicture}
\else
\hspace{5mm}
\fi
}

\newcommand{\ConjRow}[9]{%
  #1 & 
  \DrawGraph{6}{#2} & 
  \DrawGraph{6}{#3} & 
  \ProbGraph{#4}{#5}{#6} & 
  #7 & 
  #8 & 
  \ConjRowCont
}

\newcommand{\ConjRowCont}[2]{
   #2 \\
}

\newcommand{\GuessRow}[9]{%
  #1 & 
  \DrawGraph{6}{#2} & 
  \DrawGraph{6}{#3} & 
  \ProbGraph{#4}{#5}{#6} & 
  #7 & 
  #8 & 
  \GuessRowCont
}

\newcommand{\GuessRowCont}[2]{
   #2 \\
}

\newsavebox{\checktoolongbox}
\newcommand{\checktoolong}[2]{
  \begingroup
  \sbox{\checktoolongbox}{\(#1\)}
  \ifdim\wd\checktoolongbox<#2\relax
    \endgroup
    \(#1\)
  \else
    \endgroup
    \[#1\]
  \fi
}

\usepackage{longtable}
\usepackage{array}
\newcolumntype{C}[1]{>{\centering\arraybackslash}m{#1}}

\newcommand{\TableRow}[8]{
#1 & \DrawGraph{#2}{#3} & \DrawGraph{#2}{#4} & \DrawGraph{#5}{#6} & $#7$ & $#8$ & \TableRowContinued
}

\newcommand{\TableRowContinued}[2]{
$#1$ & $#2$ \\
}

\newcommand{\jared}[1]{{}{\sf\textcolor{bluegreen}{JL: #1}}}
\newcommand{\XL}[1]{{}{\sf\textcolor{magenta}{XL: #1}}}

\begin{document}

\title{The inducibility of $\mathbf{6}$-vertex graphs}

\author[L.Bodnár]{Levente Bodnár}
\address{(LB, JG, JL, OP, SS): Mathematics Institute and DIMAP, University of Warwick, Coventry, UK}
\email{\{Levente.Bodnar, Jun.Gao, Jared.Leon, O.Pikhurko, Shumin.Sun\}@warwick.ac.uk}
\thanks{(LB, JG, OP, SS): research supported by ERC Advanced Grant 101020255.}

\author[J.Gao]{Jun Gao}

\author[J.León]{Jared León}
\thanks{(JL): research supported by the Warwick Mathematics Institute Centre for Doctoral Training}

\author[X.Liu]{Xizhi Liu}
\address{(XL): School of Mathematical Sciences, USTC, Hefei, China.}
\email{liuxizhi@ustc.edu.cn}
\thanks{(XL): research supported by the Excellent Young Talents Program (Overseas) of the National Natural Science Foundation of China}

\author[O.Pikhurko]{Oleg Pikhurko}
\author[S.Sun]{Shumin Sun}



  

\newcounter{Solved}
\setcounter{Solved}{36} 
\newcounter{NewSolved}
\setcounter{NewSolved}{30}
\newcounter{Conjectures}
\setcounter{Conjectures}{12}

\begin{abstract}
  The \emph{inducibility constant} $\lambda_{F}$ of a graph $F$ is the asymptotically maximum induced density of $F$ in a growing sequence of graphs. 
  This paper systematically investigates the case when $F$ has 6 vertices (and there are 78 cases to consider up to isomorphism and complementation). We show that flag algebras can compute the sharp upper bound on $\lambda_F$ in \theSolved\  cases of which, as far as the authors know, \theNewSolved\ are new results. 
  In each of the solved cases, we also prove results about the structure of large (almost) extremal graphs. In particular, we establish perfect stability in all \the\numexpr\value{Solved}-4\relax\  cases when the extremal construction has no quasirandom parts. We also present conjectures about the value of $\lambda_{F}$ for \theConjectures\ further cases (where the upper and lower bounds are very close to each other).
\end{abstract}

\maketitle




\section{Introduction}
\label{sec:introduction}

For graphs $F$ and $G$ with $\kappa\le n$ vertices
respectively, let $P(F,G)$ denote the number of $\kappa$-subsets of
$V(G)$ that induce a graph isomorphic to $F$ and let
$p(F,G):=P(F,G)/\binom n{\kappa}$ be the \emph{(induced) density} of $F$ in~$G$.
For a given graph $F$, the \emph{$F$-inducibility problem} (or the \emph{$F$-problem} for short) asks for $p(F,n)$,
the maximum of $p(F,G)$ over all graphs $G$ with $n$
vertices. By an easy averaging argument, one can show that the limit
\[
\lambda_{F}:=\lim_{n\to\infty} p(F,n)
\]
 exists; we call it the
\emph{inducibility constant} of~$F$. 

The inducibility problem has drawn a great amount of interest since it
was introduced by Pippenger and Golumbic~\cite{PippengerGolumbic75} in
1975. For some sample of results, see
e.g.~\cite{Siran84,BollobasNaraTachibana86,BrownSidorenko94,BollobasEgawaHarrisJin95,Hirst14,HatamiHirstNorine14,EvenZoharLinial15,BaloghHuLidickyPfender16,HefetzTyomkyn18,KralNorinVolec19,Yuster19,LidickyMattesPfender23,Ueltzen24}.
Also, it was considered for many other types of structures: for example, oriented
graphs~\cite{Huang14,ChoiLidickyPfender20,BozykGrzesikKielak22,HuMaNorinWu24}, 
hypercubes~\cite{GoldwasserHansen21,GoldwasserHansen24,BodnarPikhurko25}, various tree-like structures~\cite{AlonNavesSudakov16,CzabarkaSzekelyWagner17,CzabarkaDossouOlorySzekelyWagner20,BroschPuges25}, 
constrained graphs (for example, triangle-free graphs~\cite{Gyori89,Grzesik12,HatamiHladkyKralNorine13}
), etc.

Since the inducibility constant
$\lambda_{F}$ does not change if we replace $F$ by its complement $\O F$, it is
enough to consider only one graph from each complementary pair. If $F$ is complete partite then a result by Brown and
Sidorenko~\cite[Proposition~1]{BrownSidorenko94} implies that, in order to determine
the value of $p(F,n)$, it is enough to consider only complete partite
graphs on $[n]$ and the problem in the limit reduces to maximising 
some polynomial on the space of part ratios. 
\hide{If the latter
is fully solved, with the description of all extremal ratios, then the
method of Liu, Pikhurko, Sharifzadeh and
Staden~\cite{LiuPikhurkoSharifzadehStaden23} can often be applied to
describe the structure of large (almost) extremal graphs.}%

Each $3$-vertex graph or its complement is complete partite, so they are covered e.g.\ by the above result of Brown and
Sidorenko~\cite{BrownSidorenko94}.

All $4$-vertex graphs $F$ were resolved by the results
in~\cite{BollobasNaraTachibana86,Exoo86,BrownSidorenko94,Hirst14} except when $F=P_4$ is
the $4$-vertex path. The best known lower bound
$\lambda_{P_4}\ge 1173/5824 = 0.2014\ldots$ is due to~Even-Zohar and
Linial~\cite{EvenZoharLinial15} while the best known upper bound
$\lambda_{P_4}\le 0.204513\ldots$ comes from flag algebras.

Even-Zohar and Linial~\cite[Table 2] {EvenZoharLinial15} produced the summary of
known and new results for $5$-vertex graphs $F$.
After the publication of~\cite{EvenZoharLinial15}, the values of inducibility constant for some new $5$-vertex graphs $F$ were determined in~\cite{PikhurkoSliacanTyros19,LiuMubayiReiher23,LiuPikhurkoSharifzadehStaden23,BodnarPikhurko26} but there are still $5$ (non-equivalent) remaining open cases, described in~\cite{BodnarPikhurko26}.

In this paper, we systematically investigate the inducibility problem
for graphs on $6$ vertices, using flag algebras to prove upper bounds. There are $156$ non-isomorphic $6$-vertex graphs, which we
group into $78$ complementary pairs $\{F_i,\O{F}_i\}$ for $0\le i\le 77$. (Note that, since $\binom{6}{2}$ is odd, no $6$-vertex graph is self-complementary.)
Our results are as follows; they are also summarised in Table~\ref{ta:AllResults} below.

Using the package \texttt{FlagAlgebraToolbox} of Bodn\'ar~\cite{Bodnar26}, we could produce the
sharp upper bound on  the inducibility constant $\lambda_{F}$ in \theSolved\  cases (out of $78$) of which \theNewSolved\ cases are, as far as we can see, new results.
There are three further cases where the package did not give the required sharp result but the exact value was previously known by other methods, and we include these cases in the table: $F_1$, whose inducibility constant was determined by Liu, Mubayi and Reiher~\cite[Theorem~1.13]{LiuMubayiReiher23}, the cycle case $F_{51}=C_6$, treated by Brandt, Lidick\'y and Pfender as recorded in Brandt's thesis~\cite{Brandt16}, and the net graph $F_{43}$, determined by Blumenthal and Phillips~\cite{BlumenthalPhillips25}.
In all cases solved here, we also describe almost extremal graphs of large order $n$ within $o(n^2)$ adjacency edits. It happens in each case that, informally speaking, there is a unique limit construction. The latter statement can be formalised using the language of graphons (see Lov\'asz' book~\cite{Lovasz12}), where a graphon $W$ is called \emph{$F$-extremal} if the appropriately defined density $p(F,W)$ is equal to its maximum possible value~$\lambda_{F}$. Then, in all cases solved here, the extremal graphon $W$ is unique and happens to be a step graphon (with finitely many steps).

If each value of the extremal graphon $W$ is 0 or 1 (which happens in \the\numexpr\value{Solved}-4\relax\ cases) then it is natural to represent constructions as blowups of a \emph{pattern} $B$ (a graph, with loops allowed). In all of these  \the\numexpr\value{Solved}-4\relax\  cases, we prove furthermore that the so-called \emph{perfect $B$-stability} holds using criteria from~\cite{PikhurkoSliacanTyros19}  (see Section~\ref{sec:perfect-stability} for details). This is a rather strong structural property that, in particular, implies that every large extremal graph is exactly a blowup of~$B$. Thus in order to determine $p(F,n)$ exactly for large $n$, it is enough to maximise the density of $F$ in an $n$-vertex blowup of $B$, which amounts to maximising an explicit polynomial in integers summing up to~$n$. We do not attempt to do this integer optimisation. However, we do determine the (unique in all solved cases) limiting vector of optimal part ratios. 

There are four cases, namely, $i\in\{13, 70, 75, 77\}$ in Table~\ref{ta:AllResults}, when almost extremal graphs are quasirandom balanced bipartite graphs of appropriate edge density $p_i$, and we show that every large almost optimal graph is close in the edit distance to this form. 
Thus, in each of these cases, the (unique) extremal graphon $W$ consists of two parts $V_0$ and $V_1$, each of measure $1/2$, with $W$ being $0$ on all pairs inside a part and $p_i$ on all cross-pairs. It is unclear how to define perfect stability when there are  quasirandom pairs. Nonetheless, we also establish some finer structure results about almost extremal graphs that, in particular, imply the uniqueness of the extremal graphon.
\hide{Nonetheless, we also establish some finer structure results about large extremal graphs in each of these four cases (see Section~\ref{sec:quasirandom}).}

There are eleven $6$-vertex graphs that are complete partite. 
\hide{part sizes 111111 (complement of 6), 15, 24, 33, 114, 123, 222, 1113, 1122, 11112}%
The package could determine the inducibility constant (and prove results enough for establishing perfect stability) for all of them except for $\O{F}_1=T_{2,1,1,1,1}$,
where $T_{\ell_0,\ldots,\ell_{m-1}}$ denote the complete $m$-partite graph with parts of sizes $\ell_0,\ldots,\ell_{m-1}$. 
In the case of $F_1$, the inducibility constant was determined by Liu, Mubayi and Reiher~\cite[Theorem~1.13]{LiuMubayiReiher23} and perfect stability was proved by Liu, Ma and Zhu~\cite{LiuMaZhu26}, and we note this result in our table.  Given the general result of Brown and Sidorenko~\cite{BrownSidorenko94} for complete partite $F$,  the inducibility constant (or even the whole inducibility function $p(F,n)$) was computed without flag algebras for many other $6$-vertex complete partite graphs~$F$. Of course, the problem for the empty and complete graphs, $F_0=T_{6}$ and $\O{F}_0=T_{1,1,1,1,1,1}$, is trivial. 
Already Pippenger and Golumbic~\cite[Theorem~10]{PippengerGolumbic75} determined the inducibility function of $T_{\ell,\ell}$ for every $\ell$. Brown and Sidorenko~\cite[Proposition~2]{BrownSidorenko94} proved that, for every $n\ge \ell_0+\ell_1$, the value of $p(T_{\ell_0,\ell_1},n)$ is attained by a complete partite graph with exactly two parts while the corresponding perfect stability was proved by Liu, Pikhurko, Sharifzadeh and
Staden~\cite[Theorem 1.3]{LiuPikhurkoSharifzadehStaden23}; in particular, the solution for $\O{F}_{32}=T_{5,1}$, $\O{F}_{56}=T_{4,2}$, $\O{F}_{53}=T_{3,3}$ easily follows.  Bollob{\'a}s, Egawa, Harris and Jin~\cite{BollobasEgawaHarrisJin95} determined the inducibility function for $\O{F}_8=T_{2,2,2}$ with perfect stability proved by Liu et al \cite[Theorem 1.4]{LiuPikhurkoSharifzadehStaden23}. The value of the inducibility constant of $\O{F}_3=T_{2,2,1,1}$ was determined by Yuster~\cite{Yuster26} with perfect stability proved by Liu, Ma and Zhu~\cite{LiuMaZhu26}. Solutions to the three remaining 6-vertex complete partite graphs $F$, namely $\O{F}_{37}=T_{4,1,1}$, $\O{F}_{17}=T_{3,2,1}$ and $\O{F}_6=T_{3,1,1,1}$, do not seem to appear anywhere in the literature.

Table~\ref{ta:AllResults} below summarises our findings (and also lists the answer in the special case $i=1$ for completeness). Recall that all graphs are grouped into $78$ complementary pairs indexed by $i$ running from $0$ to $77$. Row~$i$ of the table plots both $F_i$ and its complement.
The fifth column presents either the exact value of $\lambda_{F_i}$ or an upper
bound on it. The seventh column lists $N$, which is the number of vertices in $0$-flags that we needed in order to prove the upper bound (see Section~\ref{sec:flag-algebras} for details).

If we have a gap between the bounds, then we list the upper bound as the floating-point real that was returned by computer using $8$-vertex $0$-flags (with $N=8$ being the maximum value which is feasible for computer calculations). While it is routine to convert computer's output into a mathematical proof with only very small loss in the bound, we do not do this here as this would not provide any new insights.

If we are able to prove the sharp upper bound on $\lambda_{F_i}$ via flag algebras, then we also list the pattern $B$ (which we prove to be unique in all solved cases) whose blowups give the matching lower bound. For the known recursive cases $F_{43}$ and $F_{51}$, the fourth column instead displays the recursive graphon $W$ with equal parts and with $R$ indicating recursion inside the corresponding part. That is, $W$ is a self-similar graphon, where the domain is partitioned into a (possibly non-uniform) grid and some of the diagonal blocks, namely those indicated by $R$, map to linearly rescaled copies of $W$ itself. Optimal limiting part ratios (which happen to be unique up to an automorphism of $B$ in all solved cases) are not listed in the table due to space limitations but can be found in the corresponding theorems of Section~\ref{sec:exact-results}. When the value of $\lambda_{F_i}$ is an irrational number, we list it as a polynomial of $\alpha$ (where $\alpha$ is usually an optimal ratio for some collection of parts of $B$) with the sixth column listing the minimal monic polynomial having $\alpha$ as a root. The last column indicates whether perfect
stability was established (which, remarkably, is `Yes' in all solved cases apart from the four special cases with quasirandom pairs).
For the rows where this entry is a pattern $B$, extremal constructions can be obtained from the graph $B$ listed in the
fourth column as follows. We replace each hollow vertex with an
independent set and each solid vertex with a clique (and $R$ means a recursive part, see Section~\ref{sec:conjectures} for details). For each edge, we
add either a complete bipartite graph or a binomial random
bipartite graph between the corresponding sets. The precise sizes of
the independent sets and cliques, as well as further details of the
construction, are provided in Section~\ref{sec:exact-results}.

As far as we know, apart from the complete partite graphs and their complements (that we discussed above) and the previously known values for $F_{43}$ and $F_{51}$, all other cases solved here are new. 

There are \theConjectures\ further cases where the upper and lower bounds come very close to each other, discussed in Section~\ref{sec:conjectures}. We conjecture that each of the constructions presented in Section~\ref{sec:conjectures} gives the exact value of the inducibility constant.

In the remaining cases, we did computer search for constructions, using gradient descent methods in the space of step graphons with at most 4 parts and $\{0,1\}$-valued step graphons with at most 6 steps (equivalently, patterns with at most 6 vertices) also allowing for recursive parts. Table~\ref{tab:guess6} of Section~\ref{sec:Concluding} presents the found lower bounds. 
\hide{
\begin{organization*}
  The remainder of this paper is organised as follows. We introduce
  some necessary definitions and auxiliary results in
  Section~\ref{sec:preliminaries}, including brief overviews of the
  flag algebra method and the perfect
  stability. Section~\ref{sec:exact-results} contains precise
  statements and proofs of the exact results for $F_i$ whose value of
  $\lambda_{F_i}$ is determined.
  In Section~\ref{sec:quasirandom}, we \ldots
  In Section~\ref{sec:conjectures}, we make several
  conjectures concerning the remaining unsolved cases.
\end{organization*}
}
\renewcommand{\arraystretch}{1.8}
\setlength{\tabcolsep}{3pt}
\begin{longtable}{c c c c c c c c}
\caption{Inducibility constants of $6$-vertex graphs\label{ta:AllResults}} \\
\HIDE{\hline
  $i$ & $F_i$ & $\overline{F_i}$ & $B/W$ & $\lambda_{F_i}$  & $\alpha$ & $N$ & P.~Stability\\
  \hline
  \endfirsthead 
  \hline
  $i$ & $F_i$ & $\overline{F_i}$ & $B/W$ & $\lambda_{F_i}$  & $\alpha$ & $N$ & P.~Stability\\
  \hline
  \endhead 
  \TableRow{0}{6}{}{0/1,0/2,0/3,0/4,0/5,1/2,1/3,1/4,1/5,2/3,2/4,2/5,3/4,3/5,4/5}{1}{}{1}{-}{6}{Y}
  \TableRow{1}{6}{0/1}{0/2,0/3,0/4,0/5,1/2,1/3,1/4,1/5,2/3,2/4,2/5,3/4,3/5,4/5}{13}{0/0,1/1,2/2,3/3,4/4,5/5,6/6,7/7,8/8,9/9,10/10,11/11,12/12}{\frac{178200}{371293} \mbox{ (\cite{LiuMubayiReiher23})}}{-}{-}{Y \mbox{ (\cite{LiuMaZhu26})}}
  \TableRow{2}{6}{0/1,0/4}{0/2,0/3,0/5,1/2,1/3,1/4,1/5,2/3,2/4,2/5,3/4,3/5,4/5}{5}{0/1,2/3}{-\frac{255}{8} \alpha^{2} + \frac{591}{10} \alpha - \frac{129}{5}}{\alpha^{3} + 2 \alpha^{2} - \frac{32}{5} \alpha + \frac{16}{5}}{7}{Y}
  \TableRow{3}{6}{0/4,1/5}{0/1,0/2,0/3,0/5,1/2,1/3,1/4,2/3,2/4,2/5,3/4,3/5,4/5}{6}{0/0,1/1,2/2,3/3,4/4,5/5}{\frac{25}{72} \mbox{ (\cite{Yuster26})}}{-}{10}{Y\mbox{ (\cite{LiuMaZhu26})}}
  \TableRow{4}{6}{0/1,0/2,0/4}{0/3,0/5,1/2,1/3,1/4,1/5,2/3,2/4,2/5,3/4,3/5,4/5}{3}{1/2}{\frac{40}{243}}{-}{6}{Y}
  \TableRow{5}{6}{0/1,0/4,1/5}{0/2,0/3,0/5,1/2,1/3,1/4,2/3,2/4,2/5,3/4,3/5,4/5}{0}{}{\le 0.1121327654}{-}{8}{-}
  \TableRow{6}{6}{0/1,0/5,1/5}{0/2,0/3,0/4,1/2,1/3,1/4,2/3,2/4,2/5,3/4,3/5,4/5}{2}{0/0}{\frac{5}{16}}{-}{6}{Y}
  \TableRow{7}{6}{0/2,0/4,1/5}{0/1,0/3,0/5,1/2,1/3,1/4,2/3,2/4,2/5,3/4,3/5,4/5}{9}{0/2,0/1,1/2,3/4,3/5,4/5,6/7,6/8,7/8}{\frac{160}{729}}{-}{7}{Y}
  \TableRow{8}{6}{0/2,1/3,4/5}{0/1,0/3,0/4,0/5,1/2,1/4,1/5,2/3,2/4,2/5,3/4,3/5}{3}{0/0,1/1,2/2}{\frac{10}{81}\mbox{ (\cite{BollobasEgawaHarrisJin95})}}{-}{6}{Y\mbox{ (\cite{LiuPikhurkoSharifzadehStaden23})}}
  \TableRow{9}{6}{0/1,0/2,0/4,0/5}{0/3,1/2,1/3,1/4,1/5,2/3,2/4,2/5,3/4,3/5,4/5}{0}{}{\le 0.1677211391}{-}{8}{-}
  \TableRow{10}{6}{0/1,0/2,0/4,1/5}{0/3,0/5,1/2,1/3,1/4,2/3,2/4,2/5,3/4,3/5,4/5}{0}{}{\le 0.1026174578}{-}{8}{-}
  \TableRow{11}{6}{0/1,0/3,0/5,1/5}{0/2,0/4,1/2,1/3,1/4,2/3,2/4,2/5,3/4,3/5,4/5}{0}{}{\le 0.1265831119}{-}{8}{-}
  \TableRow{12}{6}{0/2,0/3,0/4,1/5}{0/1,0/5,1/2,1/3,1/4,2/3,2/4,2/5,3/4,3/5,4/5}{3}{0/0,1/2}{\frac{40}{243}}{-}{6}{Y}
  \TableRow{13}{6}{0/2,0/4,1/2,1/5}{0/1,0/3,0/5,1/3,1/4,2/3,2/4,2/5,3/4,3/5,4/5}{2}{0/1}{\frac{5440}{729} \alpha - \frac{2560}{729}}{\alpha^{2} - \frac{7}{3} \alpha + \frac{8}{9}}{6}{N/A}
  \TableRow{14}{6}{0/2,0/5,1/2,1/5}{0/1,0/3,0/4,1/3,1/4,2/3,2/4,2/5,3/4,3/5,4/5}{3}{1/2}{\frac{10}{81}}{-}{6}{Y}
  \TableRow{15}{6}{0/3,0/4,1/2,1/5}{0/1,0/2,0/5,1/3,1/4,2/3,2/4,2/5,3/4,3/5,4/5}{4}{0/2,1/3}{\frac{45}{256}}{-}{6}{Y}
  \TableRow{16}{6}{0/1,0/2,1/3,4/5}{0/3,0/4,0/5,1/2,1/4,1/5,2/3,2/4,2/5,3/4,3/5}{0}{}{\le 0.0685473237}{-}{8}{-}
  \TableRow{17}{6}{0/1,0/3,1/3,4/5}{0/2,0/4,0/5,1/2,1/4,1/5,2/3,2/4,2/5,3/4,3/5}{3}{0/0,1/1,2/2}{\frac{40}{81}}{-}{6}{Y}
  \TableRow{18}{6}{0/1,0/2,0/4,0/5,1/5}{0/3,1/2,1/3,1/4,2/3,2/4,2/5,3/4,3/5,4/5}{0}{}{\le 0.0966144422}{-}{8}{-}
  \TableRow{19}{6}{0/1,0/2,0/3,0/4,1/5}{0/5,1/2,1/3,1/4,2/3,2/4,2/5,3/4,3/5,4/5}{0}{}{\le 0.0873819112}{-}{8}{-}
  \TableRow{20}{6}{0/1,0/2,0/4,1/2,1/5}{0/3,0/5,1/3,1/4,2/3,2/4,2/5,3/4,3/5,4/5}{0}{}{\le 0.0543206983}{-}{8}{-}
  \TableRow{21}{6}{0/1,0/2,0/5,1/2,1/5}{0/3,0/4,1/3,1/4,2/3,2/4,2/5,3/4,3/5,4/5}{6}{0/1,0/2,0/3,0/4,1/2,1/3,1/4,2/3,2/4,3/4}{\frac{128}{675}}{-}{8}{Y}
  \TableRow{22}{6}{0/1,0/3,0/4,1/2,1/5}{0/2,0/5,1/3,1/4,2/3,2/4,2/5,3/4,3/5,4/5}{0}{}{\le 0.0617310538}{-}{8}{-}
  \TableRow{23}{6}{0/2,0/4,0/5,1/2,1/5}{0/1,0/3,1/3,1/4,2/3,2/4,2/5,3/4,3/5,4/5}{0}{}{\le 0.1116212593}{-}{8}{-}
  \TableRow{24}{6}{0/2,0/3,0/4,1/2,1/5}{0/1,0/5,1/3,1/4,2/3,2/4,2/5,3/4,3/5,4/5}{6}{0/1,1/2,2/3,3/4,4/5,5/0}{\frac{5}{54}}{-}{6}{Y}
  \TableRow{25}{6}{0/1,0/4,0/5,1/3,4/5}{0/2,0/3,1/2,1/4,1/5,2/3,2/4,2/5,3/4,3/5}{0}{}{\le 0.1061720738}{-}{8}{-}
  \TableRow{26}{6}{0/1,0/2,0/5,1/3,4/5}{0/3,0/4,1/2,1/4,1/5,2/3,2/4,2/5,3/4,3/5}{0}{}{\le 0.0432004682}{-}{8}{-}
  \TableRow{27}{6}{0/1,0/2,0/3,1/3,4/5}{0/4,0/5,1/2,1/4,1/5,2/3,2/4,2/5,3/4,3/5}{8}{0/1,1/2,2/3,3/0,4/5,5/6,6/7,7/4,0/0,1/1,2/2,3/3,4/4,5/5,6/6,7/7}{\frac{135}{1024}}{-}{7}{Y}
  \TableRow{28}{6}{0/2,0/3,1/4,1/5,4/5}{0/1,0/4,0/5,1/2,1/3,2/3,2/4,2/5,3/4,3/5}{3}{0/0,1/2}{\frac{15}{64}}{-}{6}{Y}
  \TableRow{29}{6}{0/3,0/4,1/3,1/5,4/5}{0/1,0/2,0/5,1/2,1/4,2/3,2/4,2/5,3/4,3/5}{0}{}{\le 0.0155317467}{-}{8}{-}
  \TableRow{30}{6}{0/2,0/4,1/3,1/5,4/5}{0/1,0/3,0/5,1/2,1/4,2/3,2/4,2/5,3/4,3/5}{0}{}{\le 0.0432388288}{-}{8}{-}
  \TableRow{31}{6}{0/2,0/3,1/2,1/3,4/5}{0/1,0/4,0/5,1/4,1/5,2/3,2/4,2/5,3/4,3/5}{3}{0/0,1/2}{\frac{10}{81}}{-}{6}{Y}
  \TableRow{32}{6}{0/1,0/2,0/3,0/4,1/2,1/3,1/4,2/3,2/4,3/4}{0/5,1/5,2/5,3/5,4/5}{2}{0/0,1/1}{\frac{20}{3} \alpha^{2} - \frac{20}{3} \alpha + \frac{4}{3}\mbox{ (\cite{BrownSidorenko94})}}{\alpha^{4} - 2 \alpha^{3} + \frac{7}{3} \alpha^{2} - \frac{4}{3} \alpha + \frac{1}{6}}{7}{Y\mbox{ (\cite{LiuPikhurkoSharifzadehStaden23})}}
  \TableRow{33}{6}{0/1,0/2,0/4,0/5,1/2,1/5}{0/3,1/3,1/4,2/3,2/4,2/5,3/4,3/5,4/5}{0}{}{\le 0.0759651322}{-}{8}{-}
  \TableRow{34}{6}{0/1,0/2,0/3,0/4,1/2,1/5}{0/5,1/3,1/4,2/3,2/4,2/5,3/4,3/5,4/5}{0}{}{\le 0.0650311105}{-}{8}{-}
  \TableRow{35}{6}{0/2,0/4,0/5,1/2,1/3,1/5}{0/1,0/3,1/4,2/3,2/4,2/5,3/4,3/5,4/5}{0}{}{\le 0.0668123579}{-}{8}{-}
  \TableRow{36}{6}{0/2,0/3,0/5,1/2,1/3,1/5}{0/1,0/4,1/4,2/3,2/4,2/5,3/4,3/5,4/5}{4}{0/1,2/3}{\frac{25}{6} \alpha^{2} - \frac{25}{6} \alpha + \frac{5}{6}}{\alpha^{4} - 2 \alpha^{3} + \frac{7}{3} \alpha^{2} - \frac{4}{3} \alpha + \frac{1}{6}}{7}{Y}
  \TableRow{37}{6}{2/3,2/4,2/5,3/4,3/5,4/5}{0/1,0/2,0/3,0/4,0/5,1/2,1/3,1/4,1/5}{2}{0/0}{\frac{80}{243}}{-}{6}{Y}
  \TableRow{38}{6}{0/1,0/2,0/3,0/4,1/5,4/5}{0/5,1/2,1/3,1/4,2/3,2/4,2/5,3/4,3/5}{5}{0/1,1/2,2/3,3/4,4/0}{\frac{72}{625}}{-}{6}{Y}
  \TableRow{39}{6}{0/1,0/3,0/4,0/5,1/3,4/5}{0/2,1/2,1/4,1/5,2/3,2/4,2/5,3/4,3/5}{0}{}{\le 0.0628954466}{-}{8}{-}
  \TableRow{40}{6}{0/1,0/2,0/4,0/5,1/3,4/5}{0/3,1/2,1/4,1/5,2/3,2/4,2/5,3/4,3/5}{0}{}{\le 0.0655364348}{-}{8}{-}
  \TableRow{41}{6}{0/1,0/2,0/5,1/4,1/5,4/5} {0/3,0/4,1/2,1/3,2/3,2/4,2/5,3/4,3/5}{0}{}{\le 0.0772859012}{-}{8}{-}
  \TableRow{42}{6}{0/1,0/2,0/3,1/4,1/5,4/5}{0/4,0/5,1/2,1/3,2/3,2/4,2/5,3/4,3/5}{0}{}{\le 0.0657267553}{-}{8}{-}
  43 & \DrawGraph{6}{0/1,0/2,0/5,1/3,1/5,4/5} & \DrawGraph{6}{0/3,0/4,1/2,1/4,2/3,2/4,2/5,3/4,3/5} & \DrawRecursiveGraph{6}{0/1,0/2,0/5,1/3,1/5,4/5}{0,1,2,3,4,5} & $\frac{24}{1555}\mbox{ (\cite{BlumenthalPhillips25})}$ & $-$ & $-$ & $-$ \\
  \TableRow{44}{6}{0/1,0/3,0/4,1/3,1/5,4/5}{0/2,0/5,1/2,1/4,2/3,2/4,2/5,3/4,3/5}{0}{}{\le 0.0408582489}{-}{8}{-}
  \TableRow{45}{6}{0/1,0/2,0/4,1/3,1/5,4/5}{0/3,0/5,1/2,1/4,2/3,2/4,2/5,3/4,3/5}{0}{}{\le 0.0481709303}{-}{8}{-}
  \TableRow{46}{6}{0/1,0/2,0/3,1/3,1/5,4/5}{0/4,0/5,1/2,1/4,2/3,2/4,2/5,3/4,3/5}{0}{}{\le 0.0602613357}{-}{8}{-}
  \TableRow{47}{6}{0/2,0/3,0/4,1/3,1/5,4/5}{0/1,0/5,1/2,1/4,2/3,2/4,2/5,3/4,3/5}{0}{}{\le 0.0432004547}{-}{8}{-}
  \TableRow{48}{6}{0/1,0/2,0/3,1/2,1/3,4/5}{0/4,0/5,1/4,1/5,2/3,2/4,2/5,3/4,3/5}{0}{}{\le 0.2178297374}{-}{8}{-}
  \TableRow{49}{6}{0/3,0/4,0/5,1/2,1/3,4/5}{0/1,0/2,1/4,1/5,2/3,2/4,2/5,3/4,3/5}{6}{0/0,1/1,2/2,3/3,4/4,5/5,0/1,1/2,2/3,3/4,4/5,5/0}{\frac{5}{54}}{-}{6}{Y}
  \TableRow{50}{6}{0/2,0/3,0/5,1/2,1/3,4/5}{0/1,0/4,1/4,1/5,2/3,2/4,2/5,3/4,3/5}{6}{0/1,1/2,2/3,3/4,4/5,5/0}{\frac{5}{54}}{-}{6}{Y}
  51 & \DrawGraph{6}{0/2,0/4,1/2,1/3,3/5,4/5} & \DrawGraph{6}{0/1,0/3,0/5,1/4,1/5,2/3,2/4,2/5,3/4} & \DrawRecursiveGraph{6}{0/2,0/4,1/2,1/3,3/5,4/5}{0,1,2,3,4,5} & $\frac{24}{1555}\mbox{ (\cite{Brandt16})}$ & $-$ & $-$ & $-$ \\
  \TableRow{52}{6}{0/1,0/3,0/4,0/5,1/3,1/4,1/5,3/5,4/5}{0/2,1/2,2/3,2/4,2/5,3/4}{0}{}{\le 0.2067706556}{-}{8}{-}
  \TableRow{53}{6}{0/4,0/5,1/2,1/3,2/3,4/5}{0/1,0/2,0/3,1/4,1/5,2/4,2/5,3/4,3/5}{2}{0/0,1/1}{\frac{5}{16}\mbox{ (\cite{BrownSidorenko94})}}{-}{6}{Y\mbox{ (\cite{LiuPikhurkoSharifzadehStaden23})}}
  \TableRow{54}{6}{0/1,0/2,0/4,0/5,1/2,1/3,1/5}{0/3,1/4,2/3,2/4,2/5,3/4,3/5,4/5}{0}{}{\le 0.0469245022}{-}{8}{-}
  \TableRow{55}{6}{0/1,0/2,0/3,0/5,1/2,1/3,1/5}{0/4,1/4,2/3,2/4,2/5,3/4,3/5,4/5}{4}{0/1,0/0,2/3,3/3}{\frac{288}{125} \alpha^{2} - \frac{288}{125} \alpha + \frac{288}{625}}{\alpha^{4} - 2 \alpha^{3} + \frac{7}{3} \alpha^{2} - \frac{4}{3} \alpha + \frac{1}{6}}{7}{Y}
  \TableRow{56}{6}{0/1,2/3,2/4,2/5,3/4,3/5,4/5}{0/2,0/3,0/4,0/5,1/2,1/3,1/4,1/5}{2}{0/0,1/1}{\frac{15}{32}\mbox{ (\cite{BrownSidorenko94})}}{-}{6}{Y\mbox{ (\cite{LiuPikhurkoSharifzadehStaden23})}}
  \TableRow{57}{6}{0/1,0/3,0/4,0/5,1/4,1/5,4/5}{0/2,1/2,1/3,2/3,2/4,2/5,3/4,3/5}{0}{}{\le 0.1297872777}{-}{8}{-}
  \TableRow{58}{6}{0/1,0/2,0/3,0/5,1/4,1/5,4/5}{0/4,1/2,1/3,2/3,2/4,2/5,3/4,3/5}{4}{0/1,1/2,2/3,1/1,2/2}{\frac{45}{512}}{-}{6}{Y}
  \TableRow{59}{6}{0/1,0/3,0/4,0/5,1/3,1/5,4/5}{0/2,1/2,1/4,2/3,2/4,2/5,3/4,3/5}{0}{}{\le 0.0582381881}{-}{8}{-}
  \TableRow{60}{6}{0/1,0/2,0/4,0/5,1/3,1/5,4/5}{0/3,1/2,1/4,2/3,2/4,2/5,3/4,3/5}{0}{}{\le 0.0311475323}{-}{8}{-}
  \TableRow{61}{6}{0/1,0/2,0/3,0/4,1/3,1/5,4/5}{0/5,1/2,1/4,2/3,2/4,2/5,3/4,3/5}{0}{}{\le 0.0511213762}{-}{8}{-}
  \TableRow{62}{6}{0/1,0/3,0/4,0/5,1/2,1/3,4/5}{0/2,1/4,1/5,2/3,2/4,2/5,3/4,3/5}{0}{}{\le 0.0650311132}{-}{8}{-}
  \TableRow{63}{6}{0/1,0/2,0/3,0/5,1/2,1/3,4/5}{0/4,1/4,1/5,2/3,2/4,2/5,3/4,3/5}{0}{}{\le 0.0905122900}{-}{8}{-}
  \TableRow{64}{6}{0/3,0/4,0/5,1/2,1/4,1/5,4/5}{0/1,0/2,1/3,2/3,2/4,2/5,3/4,3/5}{6}{0/0,1/1,2/2,3/3,4/4,5/5,0/1,1/2,2/3,3/4,4/5,5/0}{\frac{5}{108}}{-}{6}{Y}
  \TableRow{65}{6}{0/2,0/4,0/5,1/2,1/4,1/5,4/5}{0/1,0/3,1/3,2/3,2/4,2/5,3/4,3/5}{12}{0/0,1/1,2/2,3/3,4/4,5/5,0/1,1/2,2/3,3/4,4/5,5/0,0/3,1/4,2/5,6/6,7/7,8/8,9/9,10/10,11/11,6/7,7/8,8/9,9/10,10/11,11/6,6/9,7/10,8/11}{\frac{50}{27}\alpha^{2}-\frac{50}{27}\alpha+\frac{10}{27}}{\alpha^{4}-2\alpha^{3}+\frac{7}{3}\alpha^{2}-\frac{4}{3}\alpha+\frac{1}{6}}{7}{Y}
  \TableRow{66}{6}{0/1,0/2,0/4,0/5,1/2,1/4,1/5,4/5}{0/3,1/3,2/3,2/4,2/5,3/4,3/5}{0}{}{\le 0.1120734210}{-}{8}{-}
  \TableRow{67}{6}{0/2,0/3,0/5,1/2,1/4,1/5,4/5}{0/1,0/4,1/3,2/3,2/4,2/5,3/4,3/5}{0}{}{\le 0.0390894387}{-}{8}{-}
  \TableRow{68}{6}{0/2,0/3,0/5,1/2,1/3,1/5,4/5}{0/1,0/4,1/4,2/3,2/4,2/5,3/4,3/5}{5}{0/1,1/2,2/3,3/4,4/0}{\frac{72}{625}}{-}{6}{Y}
  \TableRow{69}{6}{0/2,0/3,0/4,1/2,1/3,1/5,4/5}{0/1,0/5,1/4,2/3,2/4,2/5,3/4,3/5}{5}{0/1,1/2,2/3,3/4,4/0}{\frac{72}{625}}{-}{6}{Y}
  \TableRow{70}{6}{0/1,0/2,0/3,0/4,1/5,3/5,4/5}{0/5,1/2,1/3,1/4,2/3,2/4,2/5,3/4}{2}{0/1}{\frac{12353145}{67108864}}{-}{6}{N/A}
  \TableRow{71}{6}{0/1,0/2,0/3,0/4,1/2,3/5,4/5}{0/5,1/3,1/4,1/5,2/3,2/4,2/5,3/4}{4}{0/1,1/2,2/3,0/0,3/3}{\frac{45}{512}}{-}{6}{Y}
  \TableRow{72}{6}{0/1,0/2,0/3,0/4,1/2,1/5,3/5,4/5}{0/5,1/3,1/4,2/3,2/4,2/5,3/4}{6}{0/1,1/2,2/0,3/4,4/5,5/3,0/5,1/4,2/3}{\frac{5}{54}}{-}{7}{Y}
  \TableRow{73}{6}{0/1,0/2,0/4,1/2,1/3,3/5,4/5}{0/3,0/5,1/4,1/5,2/3,2/4,2/5,3/4}{0}{}{\le 0.0289172243}{-}{8}{-}
  \TableRow{74}{6}{0/1,0/3,0/4,1/2,1/3,3/5,4/5}{0/2,0/5,1/4,1/5,2/3,2/4,2/5,3/4}{0}{}{\le 0.0487738849}{-}{8}{-}
  \TableRow{75}{6}{0/2,0/3,0/4,1/2,1/3,3/5,4/5}{0/1,0/5,1/4,1/5,2/3,2/4,2/5,3/4}{2}{0/1}{\frac{4117715}{86093442}}{-}{6}{N/A}
  \TableRow{76}{6}{0/1,0/3,0/4,0/5,1/3,1/4,3/5,4/5}{0/2,1/2,1/5,2/3,2/4,2/5,3/4}{0}{}{\le 0.1490854571}{-}{8}{-}
  \TableRow{77}{6}{0/2,0/3,0/4,1/2,1/3,1/4,3/5,4/5}{0/1,0/5,1/5,2/3,2/4,2/5,3/4}{2}{0/1}{\frac{5242880}{43046721}}{-}{6}{N/A}
  \hline
  \caption{Inducibility constants of $6$-vertex graphs} \\
  }
\end{longtable}

\section{Preliminaries}
\label{sec:preliminaries}

Let $\reals$ denote the set of real numbers, and let $\naturals$ denote the
set of non-negative integers. For $n \in \naturals$, we define
$[n] := \set{0, \ldots, n - 1}$. We may abbreviate an unordered pair
$\{u,w\}$ as $uw$, including the case when $u$ and $w$ are
single-digit numbers. For an integer $\kappa\ge 0$, let $\falling{n}{\kappa}:=n(n-1)\ldots(n-\kappa+1)$ denote
the \emph{falling $\kappa$-th factorial} of $n$. 
For a set $X$ and an integer $\kappa \geq 0$, the set
of all $\kappa$-subsets of $X$ is denoted by $\binom{X}{\kappa}$.
For $a, b, \eps \in \reals$, we write $a=b\pm \eps$ if
$b-\eps \leq a\leq b+\eps$. We may omit ceiling/floor signs when they
are not essential.

A \emph{pattern} is a pair $B = (V(B), E(B))$ where $V(B)$ is a set (of \emph{vertices}) and  $E(B)$ is a symmetric subset of $V(B)^2$ (of \emph{edges}). Thus we allow loops on
vertices but no multiple edges. Its \emph{order} is $v(B) := |V(B)|$.
A loop on vertex $u$ is indicated by $\{u,u\} \in E(B)$ or
$uu \in E(B)$.
A \emph{pattern automorphism} is a bijection $f: V(B) \to V(B)$ such
that for all $u, w \in V(B)$, $uw \in E(B)$ if and only if
$f(u)f(w) \in E(B)$. (Thus it preserves loops and non-loops.)
The \emph{neighbourhood} of a vertex $u$ in $B$ is
\[
\Gamma_B(u) := \{w \in V(B) \mid \{u,w\} \in E(B)\}.
\] 
Note that
$u \in \Gamma_B(u)$ if and only if $u$ has a loop. The \emph{degree}
of $u$ is $\deg_B(u) := |\Gamma_B(u)|$.
These definitions also apply to graphs, which are patterns without loops.

For a graph $F$, its \emph{complement} is
$\O F := \left(V(F), \binom{V(F)}{2} \setminus E(F)\right)$. The
\emph{induced subgraph} on $X \subseteq V(F)$ is
\[
F[X] := (X, \{uw \in E(F) \mid u, w \in X\}),
\] 
and for disjoint
$X, Y \subseteq V(F)$,
$F[X,Y] := \{(u,w) \in X \times Y \mid uw \in E(F)\}$ denotes the set of edges
between $X$ and $Y$. When we say \emph{subgraph}, we mean \emph{induced subgraph}.

We will need the following special graphs: $K_n$ is the complete graph on $n$ vertices, $P_n$ is the
path with $n$ vertices, and $T_{n_0,\ldots,n_{m-1}}$ is the complete
$m$-partite graph with parts of sizes $n_0,\ldots,n_{m-1}$. The graph
$T_{2,1} \cong P_3$ is called the \emph{cherry}. The vertex-disjoint union of
graphs $F$ and $H$ is denoted $F \sqcup H$. Small graphs or patterns
can be written as $(m, E)$, meaning the vertex set is $[m]$; for
example, the $4$-vertex path is $(4, \{01,12,23\})$ and the pattern consisting of two isolated
loops is $(2, \{00,11\})$.

Let $B$ be a pattern with vertex set $[m]$. For pairwise disjoint sets
$V_0, \ldots, V_{m-1}$ (some possibly empty), the \emph{blowup}
$\blow{B}{V_0,\ldots,V_{m-1}}$ is the (loopless) graph on
$V = \bigcup_{i=0}^{m-1} V_i$ where distinct $x \in V_i$ and
$y \in V_j$ are adjacent if and only if $\{i,j\} \in E(B)$. In
particular, $V_i$ induces a clique if $i$ has a loop in $B$, and an
independent set otherwise. The family of all blowups of $B$ is denoted
by $\blow{B}{}$.
The standard \emph{$(m-1)$-dimensional simplex} is
\begin{equation}\label{eq:Simplex}
  \I S_m:=\left\{(x_0,\ldots,x_{m-1})\in \I R^m\mid
    x_0+\ldots+x_{m-1}=1\mbox{ and } \forall i\in [m]\ x_i\ge
    0\right\}.
\end{equation}
For $\V x=(x_0,\ldots,x_{m-1})\in \I S_m$,
let $p(F, \blow{B}{\V x})$ be the limit as $n\to\infty$ of
$p(F, \blow{B}{V_0,\ldots,V_{m-1}})$, where $|V_i|=(x_i+o(1))n$ for
$i\in [m]$. This is a continuous function on $\I S_m$ (in fact, a
polynomial). Let
\[
  p(F, \blow{B}{}) := \sup\{p(F, \blow{B}{\V x})\mid \V x\in\I S_m\}.
\]
By the compactness of $\I S_m$ the supremum is attained by at least
one $\V x\in\I S_m$; such vectors will be called
\emph{$(F,B)$-maximisers}. The pattern $B$ is called \emph{$F$-minimal}
if, for every pattern $B'$ (of order $m-1$) obtained from $B$ by
removing one vertex, it holds that
$p(F, \blow{B'}{}) < p(F, \blow{B}{})$. By compactness and continuity,
this holds if and only if the (closed) set of $(F,B)$-maximisers is
disjoint from the boundary of the simplex~$\I S_m$. The pattern $B$ is
called \emph{$F$-optimal} if $\lambda_{F} = p(F, \blow{B}{})$, that is,
we can attain the asymptotically optimal constant $\lambda_{F}$ by some
blowups of~$B$.  If $F$ and/or $B$ is understood, we may omit them
from the above notation.

A \emph{homomorphism} from a graph $F$ to a pattern $B$ is a function
$f: V(F) \to V(B)$ (not necessarily injective) such that for all
distinct $x, y \in V(F)$, $\{x,y\} \in E(F)$ if and only if
$\{f(x), f(y)\} \in E(B)$. This corresponds to assigning vertices of
$F$ to parts in a blowup of $B$ to form an induced copy of $F$. 
 Note that a homomorphism in our definition has to preserve both edges and
non-edges. An 
\emph{embedding} $f$ of a graph $F$ into a graph $G$, denoted $f: F \hookrightarrow G$,  is an injective homomorphism from $F$ to $G$ that preserves edges and non-edges. (Thus $f$ is an isomorphism of $F$ onto its
image.)  For graphs $F$ and
$G$ with $\kappa$ and $n$ vertices respectively, the \emph{embedding density}
$t(F,G)$ is the probability that a random injective map
$V(F) \to V(G)$ is an embedding. Note that
\begin{equation}\label{eq:t}
    p(F,G) = \frac{\kappa!}{|\mathrm{aut}(F)|}\, t(F,G),
\end{equation}
where $\mathrm{aut}(F)$ is the automorphism group of $F$.

  
The \emph{edit distance} $\delta_{\mathrm{edit}}(G,H)$ between graphs
$G$ and $H$ of the same order is the minimum of
$|E(G) \bigtriangleup f(E(H))|$ over all bijections
$f: V(H) \to V(G)$, that is, the smallest number of \emph{edits} (i.e.\ adjacency changes) needed to make
one graph isomorphic to the other. The distance from $G$ to a graph family
$\cG$ is
\[
\delta_{\mathrm{edit}}(G,\cG) := \min\{ \delta_{\mathrm{edit}}(G,H)
\mid H \in \cG \text{ and } v(H) = v(G) \}.
\]
We often consider
$\cG = \blow{B}{}$, so $\delta_{\mathrm{edit}}(G, \blow{B}{})$ is the
minimum number of edits required to transform $G$ into a blowup of~$B$.

\hide{
A sequence of bipartite graphs $(G_n)_{n\in\I N}$ with almost equal parts is
\emph{$c$-quasirandom} if for every bipartite graph $F$, the
\emph{bipartite non-induced homomorphism density}
$t_{\mathrm{bip}}(F, G_n)$, the probability that a random
part-preserving map $V(F) \to V(G_n)$ sends edges to edges, satisfies
$t_{\mathrm{bip}}(F, G_n) = c^{|E(F)|} + o(1)$ as $n \to \infty$. This
matches the density in a typical $c$-random subgraph of $T_{n,n}$. By
adapting the proof by Chung--Graham--Wilson~\cite{ChungGrahamWilson89} (see e.g.~\cite[Lemma 14]{CooleyKangPikhurko22}), it suffices to verify this for $F$ being an edge and a $4$-cycle.
}

We call a sequence of graphs $(G_n)_{n\in\I N}$
whose orders tend to infinity \emph{almost $F$-extremal} (resp.\
\emph{almost $c$-regular}) if for every $\eps>0$ there is $n_0$ such that, for
every $n\ge n_0$, it holds that $p(F, G_n) \ge \lambda_{F}-\eps$ (resp.\
at least $(1-\eps)v(G_n)$ vertices $u$ of $G_n$ satisfy
$\deg_{G_n}(u)=(c\pm\eps)v(G_n)$).

Asymptotic notation, such as $o(1)$, is taken with respect to
$n\to\infty$ (where $n$ is usually the order of the unknown graph
$G$).

We will also need the following routine facts.

 \begin{proposition}
\label{pr:ClaimF3} The maximum value of $p(x):=x^5 (1 - x) + x (1 - x)^5$ for $x\in [0,1]$ 
is attained if and only if $x\in\{y_0,1-y_0\}$, where
\begin{equation}\label{eq:y0}
 y_0:=\frac12- \sqrt{\frac{2\sqrt{10} - 5}{12}}=0.16776\ldots\ .
 \end{equation}     
 \end{proposition}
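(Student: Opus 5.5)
The plan is to exploit the symmetry $p(x)=p(1-x)$ and substitute $t:=x(1-x)\in[0,1/4]$. Writing $s:=x^4+(1-x)^4$, we have $p(x)=x(1-x)\bigl(x^4+(1-x)^4\bigr)=ts$. Since $x^2+(1-x)^2=1-2t$, it follows that
\[
s=(1-2t)^2-2t^2=1-4t+2t^2 .
\]
Hence $p(x)=q(t):=t-4t^2+2t^3$. The map $x\mapsto t$ sends $[0,1/2]$ bijectively and increasingly onto $[0,1/4]$, and each $t\in[0,1/4)$ has exactly the two preimages $x$ and $1-x$. So it suffices to show that $q$ has a unique maximiser $t_0$ on $[0,1/4]$ and that $t_0<1/4$.

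For this we compute $q'(t)=1-8t+6t^2$, whose roots are $t=\frac{8\pm\sqrt{64-24}}{12}=\frac{4\pm\sqrt{10}}{6}$. The smaller root $t_0=\frac{4-\sqrt{10}}{6}\approx0.1396$ lies in $(0,1/4)$, while the larger one exceeds $1/4$. Therefore $q'>0$ on $[0,t_0)$ and $q'<0$ on $(t_0,1/4]$, which makes $t_0$ the unique maximiser of $q$ on $[0,1/4]$. As a check, $q(0)=0$ and $q(1/4)=\tfrac14-\tfrac14+\tfrac1{32}$ is smaller than $q(t_0)$, consistent with strict monotonicity on each side.

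It remains to solve $x(1-x)=t_0$ with $x\le 1/2$. This gives $x=\frac12-\sqrt{\frac14-t_0}$, and
\[
\frac14-t_0=\frac{3-8+2\sqrt{10}}{12}=\frac{2\sqrt{10}-5}{12},
\]
which matches~\eqref{eq:y0}. A numerical check gives $y_0\approx0.16776$.

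There is no real obstacle in this argument. The only points needing care are to confirm that the second root of $q'$ lies outside $[0,1/4]$, and to note that strict monotonicity of $q$ on each side of $t_0$ yields the ``only if'' direction of the claim.
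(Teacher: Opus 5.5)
Your proof is correct and is essentially the paper's argument. The paper shifts to $Q(u)=p(u+\frac12)$, an even sextic with $Q'(u)=u(-12u^4-10u^2+\frac54)$. Your variable is $t=x(1-x)=\frac14-(x-\frac12)^2$, so $Q'(u)=-2u\,q'(\frac14-u^2)$, and the two computations coincide up to an affine change of variable. Your sign analysis of $q'$ on $[0,\frac14]$ treats the endpoints a little more explicitly than the paper's appeal to the local maxima of $Q$.
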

\begin{proof}
Define $Q(x) := p(x + 1/2)$ for $x\in\I R$. It can be verified that 
$Q(x) = -2x^6 - \frac{5}{2}x^4 + \frac{5}{8}x^2 + \frac{1}{32}$. Also, 
the derivative $Q'(x)$ is $x\paren{-12x^4 - 10x^2 + \frac{5}{4}}$ and its five (simple) roots are 0 and when $x^2$ is $(-5\pm 2\sqrt{10})/12$, including two complex roots. Thus the two square roots of $(2\sqrt{10}-5)/12>0$ are the only two local maxima of $Q$, both are in $(-1/2,1/2)$ and give the same value of the symmetric polynomial~$Q$. This proves the proposition.
\end{proof}

Let us remark that the minimal monic polynomial of the real $y_0$ from~\eqref{eq:y0} is 
\[
y^4-2 y^3+\frac{7 }{3}\,y^2 -\frac{4}{3}\,y+\frac{1}{6}=\frac{p'(y)}{6(1-2y)},
\]  and this polynomial will appear in a number of our theorems.

\begin{proposition}\label{pr:F2} For any fixed reals $a,b> 0$, the maximum of $r(x):=x^a(1-x)^b$ for $x\in [0,1]$ is attained if and only if $x=a/(a+b)$.\end{proposition}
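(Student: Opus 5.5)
The plan is to locate the maximum of $r$ by a standard calculus argument. At the endpoints $x=0$ and $x=1$ we have $r(0)=r(1)=0$, because $a,b>0$. On the open interval $(0,1)$ the function $r$ is strictly positive and smooth. By compactness of $[0,1]$ and continuity of $r$, a maximum is attained, and since $r$ is positive somewhere, it must be attained in the interior. It therefore suffices to show that $r$ has exactly one critical point in $(0,1)$, namely $x=a/(a+b)$, and that this point is a strict global maximum.

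To find the critical points, I would pass to the logarithm on $(0,1)$. Set $g(x):=\ln r(x)=a\ln x+b\ln(1-x)$. Since $\ln$ is strictly increasing, $r$ and $g$ have the same maximisers in $(0,1)$. Differentiating gives
\[
g'(x)=\frac{a}{x}-\frac{b}{1-x}=\frac{a(1-x)-bx}{x(1-x)}=\frac{a-(a+b)x}{x(1-x)}.
\]
The denominator is positive on $(0,1)$, so the sign of $g'$ is the sign of the linear function $a-(a+b)x$. Hence $g'>0$ on $(0,a/(a+b))$ and $g'<0$ on $(a/(a+b),1)$. Note that $x_0:=a/(a+b)$ lies in $(0,1)$ because $a,b>0$.

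It follows that $g$, and therefore $r$, is strictly increasing on $(0,x_0]$ and strictly decreasing on $[x_0,1)$. So $x_0$ is the unique maximiser on $(0,1)$. Combined with $r(x_0)>0=r(0)=r(1)$, this shows $x_0$ is the unique maximiser on $[0,1]$, which gives the ``if and only if''. Alternatively, one could note that $g''(x)=-a/x^2-b/(1-x)^2<0$, so $g$ is strictly concave and its unique stationary point is its unique maximum.

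There is no real obstacle. The only point needing care is the endpoints: there $\ln r$ is undefined, and for non-integer exponents $r$ need not be differentiable. Both issues are avoided by handling the endpoints separately via $r(0)=r(1)=0$.
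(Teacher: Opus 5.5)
Your proof is correct and follows essentially the same route as the paper, which computes $r'(x)=x^{a-1}(1-x)^{b-1}(a-(a+b)x)$ and compares the values of $r$ at $0$, $1$ and $a/(a+b)$. Your logarithmic derivative $g'=r'/r$ has the same sign as $r'$ on $(0,1)$. Handling the endpoints separately simply makes explicit what the paper leaves implicit.
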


\begin{proof} The derivative of $r$ is $r'(x)=x^{a-1}(1-x)^{b-1}(a-(a+b)x)$ so the proposition follows by checking the values of $r$ on $0$, $1$ and $a/(a+b)$.\end{proof}

\subsection{Flag algebras}
\label{sec:flag-algebras}

As the flag algebra framework introduced by Razborov~\cite{Razborov07}
is now standard (with
e.g.~\cite{Razborov10,BaberTalbot11,deCarliSilvadeOliveiraFilhoSato16,GilboaGlebovHefetzLinial22,JeongParkYang26} giving  detailed treatments),
we only provide the essential definitions required to describe a
flag algebra certificate.

Let $\tau$ be a \emph{type}, that is,  a graph with vertex set $[q]$ for some
$q \in \I N$, where all $q$ vertices are regarded as labelled. A \emph{$\tau$-flag}
is a pair $(F,f)$ consisting of a graph $F$ and an embedding
$f:[q] \to V(F)$ of $\tau$ into $F$.
This represents a partially labelled graph in
which the labelled vertices, called \emph{roots}, induce a copy of
$\tau$.
For $\tau$-flags $(F,f)$ and $(H,h)$, define $\P((F,f),(H,h))$ as the
number of $(v(F)-q)$-subsets $X$ of $V(H) \setminus h([q])$ such that
the $\tau$-flag $(H[X \cup h([q])],h)$ is isomorphic to $(F,f)$, where
isomorphisms must preserve each labelled root (in addition to edges and
non-edges). If $v(F) \le v(H)$, the corresponding \emph{flag density}
is
\[
\p((F,f),(H,h)) := \frac{\P((F,f),(H,h))}{\binom{v(H)-q}{v(F)-q}}.
\]
For $s \ge q$, let $\C F_s^\tau$ denote the set of all $\tau$-flags
with $s$ vertices, considered up to isomorphism. 
In particular, $\cF_s^0$ is the set of isomorphism class
representatives of  graphs of order~$s$, since we let $0$ denote the empty type that has no vertices. We fix an ordering
on $\C F_s^\tau$ for indexing vectors and matrices. For a $\tau$-flag
$(H,h)$ with $v(H) \ge s$, define the column vectors
\begin{equation}\label{eq:TauVector}
  \V V_{(H,h)}^{\tau,s} := \left( \P((F,f),(H,h)) \right)_{(F,f) \in \C F_s^\tau} \quad \text{and} \quad \V v_{(H,h)}^{\tau,s} := \left( \p((F,f),(H,h)) \right)_{(F,f) \in \C F_s^\tau},
\end{equation}  
which list the counts and densities, respectively, of all $s$-vertex
$\tau$-flags in $(H,h)$, following the fixed ordering of
$\C F_s^\tau$.

We now describe the contents of a flag algebra certificate that
establishes an upper bound $\lambda_{F} \le u$ for a given graph $F$ on
$\kappa$ vertices and $u \in \I R$. The certificate specifies an
integer $\N \ge \kappa$ and, for each type $\tau$ on $[q]$ with $1 \le
q \le \N-2$ and $q + \N$ even, a positive semi-definite matrix
$X^\tau$ indexed by $\C F_s^\tau$, where $s := (\N +
q)/2$. More precisely, from each equivalence class $\C C$ of types under
isomorphism as unlabelled graphs, only one representative $\tau \in \C
C$ and its matrix $X^\tau$ are listed, with all-zero matrices assigned
to other types in~$\C C$. Additionally, for every $H \in \C F_\N^0$, a
non-negative real coefficient $c_H$ (referred to as the \emph{slack} at $H$)
is provided, so that for any graph $G$ of order $n \to \infty$, the
following identity holds:
\begin{equation}
  \label{eq:FAMain}
  \begin{split}
    \sum_{H \in \C F_\N^0} (u-p(F, H)) \P(H, G) = & \sum_{\substack{1 \le q \le \N-2 \\ q \equiv N \bmod{2}}} \sum_{f: [q] \hookrightarrow V(G)} \left( \V V_{(G,f)}^{\tau,s} \right)^T X^\tau \V V_{(G,f)}^{\tau,s} \\
        & + \sum_{H \in \C F_\N^0} c_H \P(H, G) + O(n^{\N-1}),
  \end{split}
\end{equation}
where, in the inner sum, $s := (\N + q)/2$ and
$\tau := ([q], f^{-1}(E(G)))$ is the graph on $[q]$ such that the
injection $f: [q] \hookrightarrow V(G)$ embeds $\tau$ into $G$. Note
that the right-hand side of \eqref{eq:FAMain}, excluding the error
term, is non-negative because all matrices $X^\tau$ are positive
semi-definite and all slacks $c_H$ are non-negative. Meanwhile, the
left-hand side equals $(u-p(F, G)) \binom{n}{\N}$, confirming that
$\lambda_{F} \le u$.

Note that for any choice of square matrices $X^\tau$ of
appropriate dimensions and a given $u \in \I R$, there exists a unique
set of slack coefficients $c_H$ (some of which may be negative) that
satisfies~\eqref{eq:FAMain} for every $G$. In essence, for a fixed
type $\tau$ with $q \le \N-2$ vertices and two $\tau$-flags
$H_1, H_2 \in \C F_s^\tau$ where $s := (\N + q)/2$, the sum
$\sum_{f} \P(H_1, (G, f)) \P(H_2, (G, f))$ over all embeddings $f$ of
$\tau$ into an $n$-vertex graph $G$ can be expressed, up to an
$O(n^{\N-1})$ error, as a linear combination of counts of $\N$-vertex
subgraphs in $G$, with coefficients independent of $n$. This arises
because the sum counts pairs of copies of $H_1$ and $H_2$  in $G$ with
shared roots, and each such pair involves at most $\N$ vertices, with
the contribution from each $\N$-set depending solely on the induced
subgraph isomorphism class.
Consequently, for a fixed $\N$, the optimal upper bound $u$ provable
via \eqref{eq:FAMain} corresponds to the value of an explicit, though
typically large, semi-definite program.

\hide{
The following lemma asserts,
informally, that in every nearly extremal graph, the typical vectors
of $\tau$-rooted densities must lie close to the kernel of
$X^\tau$. We use the statement from~\cite[Lemma~2.1]{BodnarPikhurko26}, 
although versions of it appeared in some earlier papers.

\begin{lemma}
  \label{lm:Eigenspaces}
  Suppose that, for some $\N$, we have a flag algebra certificate
  proving that $\lambda_{F}\le u$ as in~\eqref{eq:FAMain}. Let $\tau$ be
  any type present in the certificate, say with $V(\tau)=[q]$, and let
  $s:=(\N+q)/2$. (Thus $s$ is an integer and $s\ge 1$).  Then for
  every $\eps>0$ there are $\delta>0$ and $n_0$ such that, for every
  graph $G$ with $n\ge n_0$ vertices and $p(F, G) \ge u- \delta$,
  there are at most $\eps n^q$ embeddings $f:\tau\hookrightarrow G$ such
  that
  \begin{equation}\label{eq:Eigenspaces}
    \left\| X^\tau \B v_{(G,f)}^{\tau,s}\right\|_\infty\ge \eps.
  \end{equation}
\end{lemma}
}

\subsection{Perfect stability}
\label{sec:perfect-stability}
Let $F$ be a given graph on $\kappa \geq 2$ vertices. 
\hide{We define two
notions of stability (namely, Erd\H os--Simonovits stability and
perfect stability) and present the sufficient condition for perfect
stability from \cite{PikhurkoSliacanTyros19} that can be automatically
verified by computer.

We allow the pattern in our definition of Erd\H os--Simonovits
stability to depend on $G$. Namely, we call the $F$-problem
\emph{Erd\H os--Simonovits stable} if for every $\eps>0$ there are
$\delta>0$ and $n_0$ such that if $G$ is a graph with $n\ge n_0$
vertices and $p(F, G) \geq \lambda_{F}-\delta$ then there is an
$F$-optimal and $F$-minimal pattern $B$ with
$\delta_{\mathrm{edit}}(G,\blow{B}{})\leq \eps \binom{n}{2}$. Recall
that the last inequality means that there is a partition
$V(G)=V_0\cup\ldots\cup V_{m-1}$ with $m:=v(B)$ such that

\begin{equation}
  \label{eq:ESStab}
  \left|{\textstyle E(G)\bigtriangleup}
    E(\blow{B}{V_0,\ldots,V_{m-1}})\right|\le \eps \binom{n}{2}.
\end{equation}

Of course, if there is a unique $F$-optimal and $F$-minimal pattern
$B$ up to isomorphism and the $(F,B)$-optimal vector in $\I S_m$ is
unique (up to an automorphism of the pattern $B$) then our definition
implies the more common formulation of Erd\H os--Simonovits stability
that any two graphs $G$ and $G'$ of the same order $n\to\infty$ with
both $p(F, G)$ and $p(F, G')$ being $\lambda_{F}+o(1)$ are
$o(n^2)$-close to each other in the edit distance. This property is
very useful as the first step towards characterising graphs of
sufficiently large order $n$ with $p(F, G) = p(F,n)$. This
approach was pioneered by Erd\H os~\cite{Erdos67} and
Simonovits~\cite{Simonovits68}.

The perfect stability is a stronger property which, roughly speaking,
states there is a constant $C$ so that~\eqref{eq:ESStab} holds for any
function $\eps(n) \ge 0$ with $\delta:=C \eps$.}%
Following the (more general) definition from
\cite{PikhurkoSliacanTyros19}, we call the $F$-problem
\emph{perfectly $B$-stable} for a pattern $B$ if there is $C>0$ such
that for every graph $G$ of order $n\ge C$ we have
\begin{equation}\label{eq:PerfectStabDef}
  \dedit(G,\blow{B}{})\le C \left(p(F,n)- p(F, G)\right)n^2.
\end{equation}
In particular, it follows that, for all $n\ge C$, every order-$n$ graph
$G$ with $p(F, G) = p(F,n)$ is a blowup of $B$ and then the determination of $p(F,n)$ reduces to  maximising an explicit polynomial of degree at
most $v(F)$ over (integer) part sizes summing up to~$n$.
This definition was motivated by the earlier results in~\cite{Furedi15,NorinYepremyan17,NorinYepremyan18} proving  versions of this property. 

Pikhurko, Sliacan
and Tyros~\cite[Theorem 7.1]{PikhurkoSliacanTyros19} presented a sufficient condition for a flag algebra proof to
give perfect stability and successfully applied it to a number of
problems (including some instances of the graph inducibility problem). Here we use the version of this condition that appears in \cite[Theorem~2.2]{BodnarPikhurko26} (with the latter allowing a pattern $B$ to contain loops).
\hide{Recall that a homomorphism of a graph $F$ to a pattern $B$ is a (not
necessarily injective) map that preserves both edges and non-edges; in
other words it is an assignment of the vertices of $F$ to the parts of
a blowup of $B$ that gives an induced copy of~$F$.
}

\begin{theorem}[{\cite[Theorem~7.1]{PikhurkoSliacanTyros19}} and  {\cite[Theorem~2.2]{BodnarPikhurko26}}]\label{th:PST7.1}
  Let $F$ be a graph with $\kappa \geq 2$ vertices. Let $B$ be a pattern
  (possibly with loops) on $[m]$ and let $\V a\in\I S_m$ be a vector
  with all entries positive. Suppose that all of the following
  statements hold.
  \begin{enumerate}[1)]
  \item \label{it:PST7.11} We have a flag algebra certificate $\C C$
    on $\N$ vertices proving that
    $\lambda_{F} \leq p(F, \blow{B}{\V a})$ as in~\eqref{eq:FAMain}.
  \item \label{it:PST7.12} We have a graph $\tau$ (without loops) with
    at most $\N-2$ vertices such that
    \begin{enumerate}[(a)]
    \item\label{it:PST7.12a} 
    if we restrict the maximisation of
          $p(F,n)$ to graphs without induced copy of $\tau$ then the
          limit strictly decreases, that is,
    \begin{equation}\label{eq:PST7.1a} \lim_{n\to\infty}
            \max\{p(F, G)\mid v(G)=n,\ P(\tau,G)=0\}<
            \lambda_{F};
          \end{equation}
      \hide{, that is
      \begin{equation}\label{eq:PST7.1a} \lim_{n\to\infty}
        \max\{p(F, G)\mid v(G)=n,\ \mbox{$G$ is $\tau$-free}\}<
        \lambda_{F};
      \end{equation}}%
    \item\label{it:PST7.12b} up to an automorphism of $B$ (which by
      definition has to preserve loops and non-loops), there is a unique
      homomorphism from $\tau$ to $B$;
    \item\label{it:PST7.12c} every two distinct vertices $x,y\in V(B)$
      have distinct neighbourhoods in $f(V(\tau))$, that is,
      $\Gamma_B(x)\cap f(V(\tau)) \not=\Gamma_B(y)\cap f(V(\tau))$, for some
      (or, equivalently by Item~\ref{it:PST7.12b}, for every) homomorphism
      $f$ of $\tau$ to $B$;
    \end{enumerate}
  \item\label{it:PST7.13} Every $H\in \C F_{\N}^0$ with $c_H=0$ admits a
    homomorphism to~$B$.
  \end{enumerate} Suppose further that at least one of the following
  statements holds:
  \begin{enumerate}[(i)]
  \item\label{it:PST7.1i} the certificate $\C C$ contains the graph
    $\tau$ as a type and the corresponding matrix $X^\tau$ in $\C C$ is of
    co-rank 1 (that is, its kernel has dimension 1);
  \item\label{it:PST7.1ii} suppose that the vertices of $B$ are either all loops or all non-loops, and  the limiting maximum density of $F$ in growing graphs without any induced copy of the graph $B^\circ$ (which is obtained from $B$ by removing 
  loops) is strictly less than $\lambda_{F}$;
    \item\label{it:PST7.1iii} the pattern $B$ is $F$-minimal.
  \end{enumerate}
  Then the problem of maximising $p(F, G)$ over $n$-vertex graphs $G$
  is perfectly $B$-stable. Moreover, if \Cref{it:PST7.1i} holds (in
  addition to Items~\ref{it:PST7.11}--\ref{it:PST7.13}) then $\V a$ is
  the unique vector $\V x\in \I S_m$ that maximises
  $p(F, \blow{B}{\V x})$ up to the automorphisms of~$B$.
\end{theorem}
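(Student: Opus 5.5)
The statement is Theorem~\ref{th:PST7.1}, a sufficient condition for perfect stability quoted from~\cite{PikhurkoSliacanTyros19,BodnarPikhurko26}. I would follow the standard two-stage scheme: first get Erd\H os--Simonovits-type closeness, then upgrade it to a linear bound via local vertex-cleaning. Fix $G$ of order $n$ and write $\delta:=p(F,n)-p(F,G)$. We may assume $\delta$ is small, since otherwise \eqref{eq:PerfectStabDef} holds trivially with $C$ large, because $\dedit\le\binom n2$.

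\textbf{Step 1 (near-homomorphism to $B$).} By the certificate identity \eqref{eq:FAMain}, almost all $\N$-vertex subsets of $G$ induce graphs $H$ with $c_H=0$; precisely, the total count of $H$ with $c_H>0$ is $O(\delta n^{\N})$. By Item~\ref{it:PST7.13}, each remaining $H$ maps homomorphically to $B$. By Item~\ref{it:PST7.12a}, $G$ contains $\Omega(n^{v(\tau)})$ induced copies of $\tau$. Indeed, a removal-lemma argument shows that $G$ is $o(n^2)$-close to a $\tau$-free graph if there are few copies, which would contradict~\eqref{eq:PST7.1a}.

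\textbf{Step 2 (partition from a typical copy of $\tau$).} Pick a typical copy $T$ of $\tau$. Most $\N$-sets containing $T$ are homomorphic to $B$, and by Item~\ref{it:PST7.12b} the restriction to $T$ is essentially unique up to automorphism. So each vertex $v\notin T$ whose neighbourhood in $T$ matches $\Gamma_B(x)\cap f(V(\tau))$ is assigned to part $V_x$. By Item~\ref{it:PST7.12c} this assignment is well defined. Vertices whose trace on $T$ matches no $x$, or which lie in many bad $\N$-sets, are declared exceptional.

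\textbf{Step 3 (linear-order cleaning).} Count the pairs that violate the pattern $B$ between or inside parts. Each violating pair, together with $T$ and a random extension, lies in a bad $\N$-set with probability bounded below. This gives $O(\delta n^2)$ violations plus $O(n)$ per exceptional vertex.

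For the exceptional vertices I would move each to the part maximising its $F$-count. Here we need that every vertex of an almost-extremal graph has $F$-degree close to $\kappa\lambda_F$; this follows from the usual symmetrisation/deletion argument. We also need that a vertex far from every ``part profile'' loses a constant fraction of its degree. That is where (i), (ii) or (iii) enters:
\begin{enumerate}[(i)]
\item co-rank $1$ of $X^\tau$ forces the rooted density vector at typical $\tau$-copies to be proportional to the kernel vector. This pins the part sizes to $\V a$, which gives the uniqueness of maximisers, and makes $p(F,\blow{B}{\V x})$ strictly concave-like near $\V a$.
\item the $B^\circ$-freeness gap prevents parts from collapsing.
\item $F$-minimality ensures all parts have size bounded away from $0$, so the profile comparison is uniformly strict.
\end{enumerate}

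\textbf{Main obstacle.} The hardest step is this last quantitative one: showing that each wrongly placed or exceptional vertex costs $\Omega(n^{\kappa-1})$ copies of $F$. The difficulty is to rule out vertices that fit two part profiles almost equally, so that their cost tends to $0$. I would handle this as in~\cite{PikhurkoSliacanTyros19}. First show that the $F$-degree of a vertex attached with profile $x$ is a polynomial in the part ratios, maximised only at genuine parts. Then use positivity of the entries of $\V a$ together with (i), (ii) or (iii) to get a constant gap.

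Summing the costs gives $\dedit(G,\blow{B}{})=O(\delta n^2)$, which is \eqref{eq:PerfectStabDef}. The ``moreover'' part follows from (i) by applying the whole argument to blowups $G=\blow{B}{\V x}$ themselves.
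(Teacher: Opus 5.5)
The paper gives no proof of this statement; it is quoted from \cite[Theorem~7.1]{PikhurkoSliacanTyros19} and \cite[Theorem~2.2]{BodnarPikhurko26}. The cited argument has three stages. Items 1)--3) give \emph{robust} $B$-stability as in~\eqref{eq:RobustStab}. Each of (i)--(iii) makes $B$ $F$-minimal. Robust stability plus minimality then give perfect stability by \cite[Theorem~5.13]{PikhurkoSliacanTyros19}; the paper follows this same route for $F_{65}$ in \Cref{thm:graph_65}. Your Steps~1--3, before the cleaning, are essentially the first stage, but there are two genuine gaps.

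First, Step~1 overstates what \eqref{eq:FAMain} gives. Evaluated on $G$, it yields only $\sum_H c_H P(H,G)\le \delta\binom{n}{\N}+O(n^{\N-1})$. The error term has no sign and dominates once $\delta\lesssim 1/n$, e.g.\ for extremal $G$. If your bound $O(\delta n^{\N})$ held, the identity alone would make every extremal graph an exact blowup of $B$, which is the whole difficulty. So Steps~2--3 give $O(\max\{n,\delta n^2\})$ wrong pairs, i.e.\ only robust stability. The residual $\Theta(n)$ need not come from exceptional vertices: $\Theta(n)$ wrong pairs forming a matching put each vertex in only $O(n^{\N-2})$ bad $\N$-sets, and relocating vertices does not remove them. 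What is missing is a \emph{linear} cost for every defect: $\Omega(n^{\kappa-1})$ copies of $F$ per badly attached vertex, and $\Omega(n^{\kappa-2})$ per single wrong pair at a well-placed vertex. You then compare $G$ with the blowup $G^*$ obtained by correcting all defects, and use $P(F,G^*)\le p(F,n)\binom{n}{\kappa}$ to get $\delta\ge c\,\dedit(G,\blow{B}{})/n^2$. This comparison also replaces your appeal to symmetrisation, which controls individual $F$-degrees only for extremal $G$, not for the arbitrary $G$ in~\eqref{eq:PerfectStabDef}.

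Second, this linear cost is exactly the crux you flag. You assert it (``maximised only at genuine parts'') rather than prove it, and attribute the gap to (i)--(iii). It does not come from there. For instance, (ii) and (iii) concern deleting vertices of $B$ and say nothing about a new vertex whose profile is not that of a part. The mechanism that works in all cases is the slack term, via Item~3).

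Add $\eps n$ clones of such a vertex to a large blowup $\blow{B}{\V a}$. Alternatively, flip an $\eps$-fraction of the pairs between two parts, or inside one. The $F$-density changes by $\kappa\eps(d-\lambda_F)+O(\eps^2)$, where $d$ is the normalised $F$-degree of the added vertex; in the flipping case it changes by $\eps D+O(\eps^2)$. Meanwhile an $\Omega(\eps)$-fraction of all $\N$-sets contain a copy of $\tau$ together with either the perturbed vertex and a witness of its wrong adjacency, or a flipped pair. These $\N$-sets admit no homomorphism to $B$, so they carry positive slack. Hence \eqref{eq:FAMain} forces $\lambda_F-d=\Omega(1)$ (respectively $D<0$); in effect, this is robust stability applied to perturbed blowups.

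The alternatives (i)--(iii) enter only through $F$-minimality. Under (i), the one-dimensional kernel forces every maximiser to equal $\V a$, whose entries are positive. Under (ii), Item~2)(c) makes $B$ twin-free, so blowups of $B$ minus a vertex are $B^\circ$-free. Minimality keeps all parts of near-extremal graphs of linear size, which is what makes these costs uniform.
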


\hide{Let us remark that the limit in the left-hand side
of~\eqref{eq:PST7.1a} exists by an easy double-counting argument, see
e.g.\ \cite[Lemma~2.2]{PikhurkoSliacanTyros19}.
Also, note that we
allow non-injective homomorphisms in \Cref{it:PST7.12b} and such maps
may be potentially required (e.g.\ we may need to force that a
specific vertex $x$ of $\tau$ is mapped to a loop, which can be done
by adding a clone $x'$ of $x$ and making them adjacent).}%

\hide{
We now prove some auxiliary and straightforward to verify statements that will be useful later.
\begin{proposition}
\label{prop:two_components_densities}
    For any $\eps > 0$, integers $k_1, k_2 \geq 1$ and any $x \in [0, 1]$ there exist $n_0$ such that if $H$ a disjoint union of two connected graphs $H_1$ and $H_2$ with $k_1$ and $k_2$ vertices respectively, and $G = G_1 \cup G_2$ is a graph on $n \geq n_0$ vertices, where $|V(G_1)| = xn$ and $|V(G_2)| = (1 - x)n$. Assume furthermore that every embedding of $H$ into $G$ maps $H_1$ to $G_i$ and $H_2$ to $G_j$, where $i \neq j$. Then,
    \begin{equation}
        p(H, G) = \frac{\binom{k}{k_1}}{1 + \mathbb{1}_{\{H_1 \cong H_2\}}} \paren{x^{k_1} (1 - x)^{k_2}p(H_1, G_1)p(H_2, G_2) + x^{k_2}(1 - x)^{k_1}p(H_2, G_1)p(H_1, G_2)} \pm \eps,
    \end{equation}
    where $\mathbb{1}_{\{H_1 \cong H_2\}}$ is $1$ if $H_1$ is isomorphic to $H_2$, and $0$ otherwise.
\end{proposition}

\begin{proposition}
\label{prop:optimisation-simplex}
    The following holds:
    \begin{enumerate}
        \item\label{item:f1-maximiser} If $x_0, x_1, x_2 \in \reals$ with $x_i \geq 0$ and $x_0 + x_1 + x_2 = 1$, then the expression $x_0^2 x_1 x_2 (x_1^2 + x_2^2)$ is uniquely maximised when $\paren{x_0, x_1, x_2} = \paren{1/3, 1/3, 1/3}$.
        \item If $x_0, x_1 \in \reals$ with $x_i \geq 0$ and $x_0 + x_1 = y$, where $y \geq 0$ is fixed, then
        \begin{enumerate}
            \item\label{item:f2-maximiser-1} the expression $x_0^2 x_1^2$ is uniquely maximised when $\paren{x_0, x_1} = \paren{y / 2, y / 2}$; and
            \item\label{item:f2-maximiser-2} the expression $x_0^2
              x_1^3$ is uniquely maximised when $\paren{x_0, x_1} =
              \paren{2y / 5, 3y / 5}$.
            \item\label{item:f2-maximiser-3} the expression
              $x_0^5 x_1$ is uniquely maximised when
              $\paren{x_0, x_1} = \paren{5y / 6, y / 6}$.
        \end{enumerate}
        \item\label{item:f3-maximisers} The real polynomial $x(1 - x) (x^4 + (1 - x)^4)$ is symmetric around $1 / 2$ and its unique maximisers are $1/2 - y_0 = 0.1677\ldots$ and $1/2 + y_0 = 0.8322\ldots$, where
        \begin{equation*}
            y_0 = \sqrt{\frac{2\sqrt{10} - 5}{12}}.
        \end{equation*}
    \end{enumerate}
\end{proposition}
\begin{proof}
    We first prove \cref{item:f1-maximiser}. Let $s = x_1 + x_2$, and $d = x_1 - x_2$, so that $x_1 = (s + d) / 2$ and $x_2 = (s - d) / 2$. We also have $x_1x_2 = \paren{s^2 - d^2} / 4$ and $x_1^2 + x_2^2 = \paren{s^2 + d^2} / 2$. We then have $x_1x_2 \paren{x_1^2 + x_2^2} = \paren{s^4 - d^4} / 8 \leq s^4 / 8$, with equality only when $d^4 = 0$, which holds if and only if $x_1 = x_2 = s / 2$. Also, by the arithmetic mean -- geometric mean inequality we have
    \begin{equation*}
        \frac{1}{6} \paren{2 \cdot \frac{x_0}{2} + 4 \cdot \frac{s}{4}} \geq \paren{\paren{\frac{x_0}{2}}^2 \paren{\frac{s}{4}}^4}^{1/6},
    \end{equation*}
    and therefore, by noticing that $x_0 + s = 1$ this inequality holds if and only if $x_0 s^4 / 8 \leq 2 / 729$, with equality only when all terms averaged are equal, i.e., $x_0 / 2 = s / 4$, which holds if and only if $x_0 = 1/3$. We conclude that the expression $x_0^2 x_1 x_2 (x_1^2 + x_2^2)$ is uniquely maximised when $x_0 = 1/3$ and $x_1 = x_2 = s/2 = 1/3$.

    We now prove Items \ref{item:f2-maximiser-1} and \ref{item:f2-maximiser-2}. By the arithmetic mean -- geometric mean inequality we have
    \begin{equation*}
        \frac{1}{4} \paren{2 \cdot \frac{x_0}{2} + 2 \cdot \frac{x_1}{2}} \geq \paren{\paren{\frac{x_0}{2}}^2 \paren{\frac{x_1}{2}}^2}^{1/4},
    \end{equation*}
    and since $x_0 + x_1 = y$, this inequality holds if and only if $x_0^2 x_1^2 \leq y^4 / 2^8$ with equality only when all the terms being average are the same, i.e., $x_0 / 2 = x_1 / 2 = y / 4$. The proof of \ref{item:f2-maximiser-2} is similar.

    We now prove \cref{item:f3-maximisers}. Let $P(x) = x(1 - x)(x^4 + (1 - x)^4)$, and let $Q(x) = P(x + 1/2)$. It can be verified that $Q(x) = -2x^6 - \frac{5}{2}x^4 + \frac{5}{8}x^2 + \frac{1}{32}$. Given that all non-zero terms in $Q$ are even, $Q$ is symmetric around $0$, which implies that $P$ is symmetric around $1/2$. We also have $Q'(x) = x\paren{-12x^4 - 10x^2 + \frac{5}{4}}$. Using the quadratic formula (for $x^2$), we conclude that $Q'(x) = 0$ if and only if $x \in \set{y_0, 0, -y_0}$, where $y_0$ is defined in the statement of \cref{item:f3-maximisers}.

    Given that $Q$ is an even-degree polynomial with a negative leading coefficient, it is bounded. In addition, it is not difficult to verify that $Q''(0) > 0$ and $Q''(y_0) = Q''(-y_0) < 0$, so that $y_0$ and $-y_0$ are the unique maximisers of $Q$. Therefore, $1/2 + y_0$ and $1/2 - y_0$ are the unique maximisers of $P$.
\end{proof}

\begin{lemma}
  \label{lem:6-variables-equality}
  Let $x_0, \ldots, x_5 \geq 0$ be real numbers with
  $x_0 + \cdots + x_5 = 1$. Define $e_1 = x_0 + x_1 + x_2$,
  $e_2 = x_0x_1 + x_1x_2 + x_2x_0$ and $e_3 = x_0x_1x_2$, and definite
  $e_1', e_2', e_3'$ similarly for $x_3, x_4, x_5$. Then the function
  \begin{equation*}
    \frac{e_1'^3e_2(e_1e_2-3e_3)}{e_1^3}
    +
    \frac{e_1^3e_2'(e_1'e_2'-3e_3')}{e_1'^3}
  \end{equation*}
  is uniquely maximised when $x_0 = \cdots = x_5 = 1 / 6$, when it
  achieves a value of $1 / 216$.
\end{lemma}
\begin{proof}
\jared{ Will change this proof to something simpler.
  Set
  \[
    a:=e_1=x_0+x_1+x_2,\qquad a':=e_1'=x_3+x_4+x_5=1-a.
  \]
  Since the expression contains the factors \(a^{-3}\) and \((a')^{-3}\), we work on the admissible region
  \[
    a>0,\qquad a'>0.
  \]
  Define
  \[
    G(u_1,u_2,u_3)
    := s_2\bigl(s_1s_2-3s_3\bigr),
  \]
  where
  \[
    s_1=u_1+u_2+u_3,\qquad
    s_2=u_1u_2+u_2u_3+u_3u_1,\qquad
    s_3=u_1u_2u_3.
  \]
  Then the quantity to be maximised can be written as
  \[
    F
    = \frac{(a')^3}{a^3}\,G(x_0,x_1,x_2)
      +
      \frac{a^3}{(a')^3}\,G(x_3,x_4,x_5).
  \]
  Thus, for fixed \(a\in(0,1)\), the two triples \((x_0,x_1,x_2)\) and \((x_3,x_4,x_5)\) may be optimized independently, under the constraints
  \[
    x_0+x_1+x_2=a,\qquad x_3+x_4+x_5=a'.
  \]
  We first solve the following auxiliary problem: for fixed \(s>0\), maximise
  \[
    G(u_1,u_2,u_3)
  \]
  subject to
  \[
    u_1,u_2,u_3\ge 0,\qquad u_1+u_2+u_3=s.
  \]
  Write
  \[
    u_i=s y_i,\qquad y_i\ge 0,\qquad y_1+y_2+y_3=1.
  \]
  If
  \[
    \sigma_2:=y_1y_2+y_2y_3+y_3y_1,\qquad
    \sigma_3:=y_1y_2y_3,
  \]
  then
  \[
    s_2=s^2\sigma_2,\qquad s_3=s^3\sigma_3,
  \]
  and hence
  \[
    G(u_1,u_2,u_3)
    =s^5\,\sigma_2(\sigma_2-3\sigma_3).
  \]
  Therefore, for fixed \(s\), it suffices to maximise
  \[
    \Phi(y_1,y_2,y_3):=\sigma_2(\sigma_2-3\sigma_3)
  \]
  on the simplex
  \[
    y_1,y_2,y_3\ge 0,\qquad y_1+y_2+y_3=1.
  \]
  Now \(\Phi\) is a symmetric polynomial in \(y_1,y_2,y_3\) of degree \(4\). By the \(uvw\)-principle for symmetric polynomials with fixed sum, it is enough to consider the case of two equal variables. So let
  \[
    y_1=y_2=t,\qquad y_3=1-2t,\qquad 0\le t\le \frac12.
  \]
  Then
  \[
    \sigma_2=t^2+2t(1-2t)=2t-3t^2,
    \qquad
    \sigma_3=t^2(1-2t),
  \]
  and therefore
  \[
    \Phi(t)
    =(2t-3t^2)\Bigl((2t-3t^2)-3t^2(1-2t)\Bigr).
  \]
  A direct simplification gives
  \[
    \Phi(t)=-2t^2(3t-2)(3t^2-3t+1).
  \]
  Differentiating, we obtain
  \[
    \Phi'(t)
    =-2t(3t-1)(15t^2-15t+4).
  \]
  Since
  \[
    15t^2-15t+4
    =15\left(t-\frac12\right)^2+\frac14>0
    \qquad\text{for all }t,
  \]
  it follows that
  \[
    \Phi'(t)>0 \quad\text{for }0<t<\frac13,
    \qquad
    \Phi'(t)<0 \quad\text{for }\frac13<t<\frac12.
  \]
  Hence \(\Phi\) attains its unique maximum at $t=\frac13$, that is,
  \[
    y_1=y_2=y_3=\frac13.
  \]
  At this point,
  \[
    \sigma_2=\frac13,\qquad \sigma_3=\frac1{27},
  \]
  hence
  \[
    \max \Phi
    =\frac13\left(\frac13-\frac3{27}\right)
    =\frac13\cdot\frac29
    =\frac{2}{27}.
  \]
  Consequently,
  \[
    G(u_1,u_2,u_3)\le \frac{2s^5}{27},
  \]
  with equality if and only if
  \[
    u_1=u_2=u_3=\frac{s}{3}.
  \]
  Applying this to the two triples in the original problem, we get
  \[
    G(x_0,x_1,x_2)\le \frac{2a^5}{27},
    \qquad
    G(x_3,x_4,x_5)\le \frac{2(a')^5}{27},
  \]
  and equality holds simultaneously if and only if
  \[
    x_0=x_1=x_2=\frac{a}{3},
    \qquad
    x_3=x_4=x_5=\frac{a'}{3}.
  \]
  Therefore,
  \[
    F \le \frac{(a')^3}{a^3}\cdot \frac{2a^5}{27} +
    \frac{a^3}{(a')^3}\cdot \frac{2(a')^5}{27}.
  \]
  Simplifying,
  \[
    F \le \frac{2}{27}\bigl(a^2(a')^3+a^3(a')^2\bigr) =
    \frac{2}{27}a^2(a')^2(a+a').
  \]
  Since \(a+a'=1\), this becomes
  \[
    F\le \frac{2}{27}a^2(1-a)^2.
  \]
  Now the function
  \[
    \psi(a):=a^2(1-a)^2,\qquad 0<a<1,
  \]
  has derivative
  \[
    \psi'(a)=2a(1-a)(1-2a),
  \]
  so \(\psi\) attains its unique maximum at
  \[
    a=\frac12.
  \]
  Hence
  \[
    F\le \frac{2}{27}\left(\frac12\right)^2\left(\frac12\right)^2
    =\frac{2}{27}\cdot\frac1{16}
    =\frac{1}{216}.
  \]
  Equality holds if and only if all equality conditions above hold
  simultaneously, namely
  \[
    a=\frac12,\qquad
    x_0=x_1=x_2=\frac{a}{3}=\frac16,\qquad
    x_3=x_4=x_5=\frac{a'}{3}=\frac16.
  \]
  Therefore the maximum value is $\max F=\frac{1}{216}$ and the unique
  maximiser is $x_0=x_1=x_2=x_3=x_4=x_5=\frac16$. This proves the
  claim.}
\end{proof}
}

\section{Exact results}
\label{sec:exact-results}

This section contains the formal statements of the exact results from Table~\ref{ta:AllResults}. They all are proved using the package \texttt{FlagAlgebraToolbox} (commit 22b8765) which is being developed by Bodn\'ar~\cite{Bodnar26}. The script and certificates can be found in \href{https://github.com/bodnalev/supplementary_files/tree/main/graph_inducibility_6vtx}{\url{https://github.com/bodnalev/supplementary_files/tree/main/graph_inducibility_6vtx}}. The provided notebooks contain both the code that was used by us to generate all certificates (in case the reader would like to experiment with it by modifying it and running on some similar problems) as well as the code that verifies all stated results. General instructions on how to use the package can be found in~\cite{Bodnar26}.

\subsection{Cases where the `plain' application of the package sufficed}

Here we present the cases when the standard application of the flag algebra method gave the sharp upper bound on $\lambda_{F}$ and produced  a certificate which satisfied the perfect stability condition from \Cref{th:PST7.1}. 
For each such case, we list the $6$-vertex graph $F$, the value of $\lambda_{F}$, the pattern $B$, the vector $\V x$ of limiting ratios that maximise $F$-density in blowups of $B$ (which in all these cases happens to be unique). Also, we list a choice for $\tau$ that satisfies \Cref{th:PST7.1} and indicate which one of Items~\ref{it:PST7.1i}--\ref{it:PST7.1iii} is satisfied. If it is Item~\ref{it:PST7.1i} then the full theorem is proved. Otherwise, we provide a proof that the stated vector $\V x$ is the unique maximiser and, in case of Item~\ref{it:PST7.1iii}, also that the pattern $B$ is $F$-minimal. It is quite often the case that our graph $\tau$ is a subgraph of $F$ and then Condition~
\ref{it:PST7.12a} of \Cref{th:PST7.1} is trivially true.

In all these cases, except for \Cref{thm:graph_7} and~\Cref{thm:graph_27} (the graphs $F_7$ and $F_{27}$), it happens that the smallest $\N$ that gave the proof of the tight upper bound on $\lambda_{F_i}$ was also sufficient for verifying perfect stability via \Cref{th:PST7.1}, so we list this value of $\N$. 
(In each of the two exceptional cases $i\in\{7,27\}$, we could determine $\lambda_{F_i}$ using $\N=7$ but needed $\N=8$ for proving perfect stability.)

\label{sec:perf-stab-from-old-theorem}

We omit the trivial case $i=0$, where $F_0=(6,\{\})$ is the edgeless graph on $6$ vertices for which, trivially, $\lambda_{F_0}=1$ with the problem being perfectly $B$-stable for $B=(1,\{\})$. Here, the upper bound can also be formally proved via a flag algebra identity~\eqref{eq:FAMain} by taking each $X^\tau$ to be the all-zero matrix. 

In the following results, uniqueness of the part ratios is understood to be up to the automorphisms of the underlying pattern~$B$. Recall that $(m,E)$ is an abbreviation for the graph with vertex set $[m]$ and edge set $E$. 

\newcommand{\irationalConditionIPartI}[7]{Let
  $F_{#1} := \paren{#2, \set{#3}}$. Then $\lambda_{F_{#1}} =$ #4, where $\alpha_0 = #5\ldots$
  is a root of the polynomial #6. Furthermore, the $F_{#1}$-problem
  is perfectly $B$-stable,}
\newcommand{\irationalConditionIPartII}[7]{with
  $B := (#1, \set{#2})$ by satisfying  Theorem~\ref{th:PST7.1}\,\ref{it:PST7.1i} with $N = #4$ and
  $\tau = \left([#5], \set{#6}\right)$. Also, the unique maximiser of
  $p(F_{#7}, \blow{B}{\V x})$ is $\V x := ($#3$)$.}

\newcommand{\rationalConditionIPartI}[6]{Let  $F_{#1} := \paren{#2, \set{#3}}$. Then $\lambda_{F_{#1}} = #4$#6. Furthermore, the
  $F_{#1}$-problem is perfectly $B$-stable,}
\newcommand{\rationalConditionIPartII}[8]{with
  \checktoolong{B = ([#1], \set{#2}),}{6cm} and the unique maximiser of
  $p(F_{#8}, \blow{B}{\V x})$ is $\V x = ($#3$)$, by satisfying
Theorem~\ref{th:PST7.1}\,\ref{it:PST7.1i} with $N = #4$ and
  $\tau = \left([#5], \set{#6}\right)$.}

\newcommand{\rationalConditionIIPartI}[5]{Let
  $F_{#1} := \paren{#2, \set{#3}}$. Then $\lambda_{F_{#1}} = #4$. Furthermore, the
  $F_{#1}$-problem is perfectly $B$-stable,}
\newcommand{\rationalConditionIIPartII}[7]{with
  $B := (#1, \set{#2})$ by satisfying
Theorem~\ref{th:PST7.1}\,\ref{it:PST7.1ii} with $N = #4$ and $\tau \coloneqq (#5, \set{#6})$. Also,  the unique maximiser of
  $p(F_{#7}, \blow{B}{\V x})$ is $\V x = ($#3$)$.}

\newcommand{\irationalConditionIIPartI}[7]{Let
  $F_{#1} := \paren{#2, \set{#3}}$. Then $\lambda_{F_{#1}} =$ #4, where
  $\alpha_0 = #5\ldots$ is a root of the polynomial #6. Furthermore,
  the $F_{#1}$-problem is perfectly $B$-stable,}
\newcommand{\irationalConditionIIPartII}[7]{with
  $B := (#1, \set{#2})$ by satisfying
  Theorem~\ref{th:PST7.1}\,\ref{it:PST7.1ii} with $N = #4$ and $\tau \coloneqq (#5, \set{#6})$. Also, the unique maximiser of
  $p(F_{#7}, \blow{B}{\V x})$ is $\V x = ($#3$)$.}


\begin{theorem}
    \label{thm:graph_2}
    \irationalConditionIPartI{2}{6}{01, 04}{$- \frac{255}{8}\alpha_0^{2} + \frac{591}{10}\alpha_0 - \frac{129}{5}$}{0.7210}{$\alpha^{3} + 2\alpha^{2} - \frac{32}{5}\alpha + \frac{16}{5}$}{0/1,0/4}{}
    \irationalConditionIPartII{5}{01, 23}{$\frac{1}{4}\alpha_0,\frac{1}{4}\alpha_0,\frac{1}{4}\alpha_0,\frac{1}{4}\alpha_0,- \alpha_0 + 1$}{7}{5}{02,14}{2}
\end{theorem}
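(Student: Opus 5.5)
The proof splits into a lower bound coming from an explicit construction and an upper bound with stability coming from \Cref{th:PST7.1}. Here $F_2=(6,\{01,04\})$ is a cherry $P_3$ plus three isolated vertices, and $B=(5,\{01,23\})$ is two disjoint edges plus an isolated vertex, with no loops.

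\textbf{Lower bound.} A blowup of $B$ is two complete bipartite graphs $T_{V_0,V_1}$ and $T_{V_2,V_3}$ together with an independent set $V_4$, all mutually non-adjacent. An induced copy of $F_2$ must have its cherry inside one bipartite component, with centre in one part and both leaves in the other. The three isolated vertices must be pairwise non-adjacent and non-adjacent to the cherry. So each of them lies in $V_4$, or in the leaf part of the cherry's component, or inside a single part of the other component.

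Writing $x_i$ for the part ratios, I will expand $p(F_2,\blow{B}{\V x})$ as an explicit degree-$6$ polynomial. Using the symmetry $x_0=x_1=x_2=x_3=y$ and $x_4=1-4y$, I reduce it to a one-variable polynomial and maximise in $y$. The critical-point equation should factor to give the cubic $\alpha^3+2\alpha^2-\frac{32}{5}\alpha+\frac{16}{5}$ with $\alpha=4y$, whose relevant root is $\alpha_0\approx0.7210$. Substituting back and reducing modulo the cubic gives the stated quadratic expression in $\alpha_0$. This shows $\lambda_{F_2}\ge$ the stated value. I do not need to prove separately that the symmetric point is the global maximiser: that will follow from the final clause of \Cref{th:PST7.1}.

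\textbf{Upper bound and stability.} I take the flag algebra certificate with $N=7$ produced by the package and verify identity \eqref{eq:FAMain} exactly, with $u=p(F_2,\blow{B}{\V a})$, where $\V a$ is the stated vector. Since $u$ is irrational, the certificate has to be rounded into $\mathbb{Q}(\alpha_0)$ and checked exactly. I then check the remaining hypotheses of \Cref{th:PST7.1} with $\tau=([5],\{02,14\})$, which is two disjoint edges plus an isolated vertex.
\begin{itemize}
\item Condition (a) holds because $\tau$ is an induced subgraph of $F_2$, so $\tau$-free graphs contain no copy of $F_2$.
\item For condition (b), one homomorphism is $0,2\mapsto$ the edge $01$, $1,4\mapsto$ the edge $23$, and $3\mapsto 4$. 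For uniqueness, each edge of $\tau$ must go to an edge of $B$ with its endpoints in different parts. The two edges of $\tau$ are mutually non-adjacent, so they must go to different edges of $B$. The isolated vertex must then avoid all four of those parts, forcing it to $4$. This is unique up to $\mathrm{aut}(B)$.
\item For condition (c), the five vertices of $B$ have neighbourhood traces in the image $\{0,1,2,3,4\}$ equal to $\{1\},\{0\},\{3\},\{2\},\emptyset$. These are pairwise distinct.
\item Condition 3) requires computing by machine that every $7$-vertex graph $H$ with $c_H=0$ admits a homomorphism to $B$.
\item Item (i) requires checking that $X^\tau$ for this $\tau$ in the certificate has co-rank $1$.
\end{itemize}

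Once all of these hold, \Cref{th:PST7.1} gives perfect $B$-stability and uniqueness of the maximiser $\V a$. The main obstacle is the exact rational certificate over the cubic field, in particular making sure the rounding preserves positive semidefiniteness together with the required kernel structure: co-rank $1$ for $X^\tau$ and zero slacks only on $B$-colourable graphs. I would first try projecting onto the kernel spanned by the flag-density vectors of the construction before rounding.
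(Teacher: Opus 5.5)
The overall route matches the paper: a computer-checked certificate with $N=7$, fed into Theorem~\ref{th:PST7.1} with $\tau=([5],\{02,14\})$ and Item~(i). Your checks of Conditions~2(b) and~2(c) are correct. However, your verification of Condition~2(a) is false, and this leaves a genuine gap.

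\textbf{Condition 2(a).} The type $\tau$ is $2K_2\sqcup K_1$. But $F_2=(6,\{01,04\})$ has only two edges, and both contain vertex $0$. So $F_2$ has no two disjoint edges, and $\tau$ is \emph{not} an induced subgraph of $F_2$. Consequently $\tau$-free graphs can contain many copies of $F_2$. For instance, a complete bipartite graph plus an independent set (a blowup of $(3,\{01\})$) has no induced $2K_2$. Its $F_2$-density is $15s^3(1-s)^3$, reaching $15/64$. So the inequality $\lim_{n\to\infty}\max\{p(F_2,G): v(G)=n,\ P(\tau,G)=0\}<\lambda_{F_2}\approx 0.2418$ is a real additional statement that your proposal never proves. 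It needs its own argument, e.g.\ a second flag-algebra computation in the theory of $\tau$-free graphs giving an upper bound strictly below $\lambda_{F_2}$. The shortcut ``$\tau$ is an induced subgraph of $F$'' works for many other cases in the paper, but not this one. Without 2(a), Theorem~\ref{th:PST7.1} cannot be invoked. Then neither perfect $B$-stability nor the uniqueness of the maximiser follows; you deferred the latter to the final clause of that theorem. Only the value of $\lambda_{F_2}$ would remain proved.

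\textbf{The lower-bound count.} You allow an isolated vertex of $F_2$ to lie in the leaf part of the cherry's component. This is wrong: every vertex there is adjacent to the centre, just as every vertex of the centre's part is adjacent to both leaves. Suppose the centre is in $V_a$ and the leaves in $V_b$, where $\{a,b\}$ is one edge of $B$ and $\{c,d\}$ the other. Then the three isolated vertices must lie in $V_c\cup V_d\cup V_4$, and they cannot use both $V_c$ and $V_d$. Hence, with $60=6!/|\mathrm{aut}(F_2)|$,
\[
p(F_2,\blow{B}{\V x})=60\sum_{(a,b)}x_a x_b^2\left((x_c+x_4)^3+(x_d+x_4)^3-x_4^3\right),
\]
summing over $(a,b)\in\{(0,1),(1,0),(2,3),(3,2)\}$. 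At $x_0=\dots=x_3=\alpha/4$ and $x_4=1-\alpha$ this becomes $\frac{15}{4}\alpha^3\left(2\left(1-\frac{3\alpha}{4}\right)^3-(1-\alpha)^3\right)$. Its derivative is $\frac{225}{64}\alpha^2\left(\alpha^3+2\alpha^2-\frac{32}{5}\alpha+\frac{16}{5}\right)$, which gives exactly the stated cubic. With your description, the extra terms would change the polynomial, and the stated cubic would not come out.
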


\begin{theorem}
    \label{thm:graph_4}
    \rationalConditionIIPartI{4}{6}{01, 02, 04}{40/243}{0/1,0/2,0/4}
    \rationalConditionIIPartII{3}{12}{$1/3,1/3,1/3$}{6}{2}{01}{4}
\end{theorem}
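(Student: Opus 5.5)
The plan is to apply Theorem~\ref{th:PST7.1} with Item~\ref{it:PST7.1ii}, taking $B=(3,\{12\})$, $\N=6$ and $\tau=(2,\{01\})$, the single edge. Since Item~\ref{it:PST7.1ii} does not itself give uniqueness of the optimal ratios, we must also check separately that $\V a=(1/3,1/3,1/3)$ is the unique maximiser. The steps are: compute the blowup polynomial and identify its maximisers; then verify, by hand or by machine, each hypothesis of Theorem~\ref{th:PST7.1}.

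\emph{The polynomial.} A blowup of $B$ consists of an independent set $V_0$ together with a complete bipartite graph between $V_1$ and $V_2$. Consider an induced copy of $F_4=K_{1,3}\sqcup 2K_1$ in such a blowup.
\begin{itemize}
\item The centre lies in $V_1$ or $V_2$, say $V_1$. Then its three leaves must lie in $V_2$.
\item The two isolated vertices must be non-adjacent to the centre, which forces them into $V_0\cup V_1$.
\item They must also be non-adjacent to the leaves in $V_2$, which rules out $V_1$. Hence they lie in $V_0$.
\end{itemize}
Since $|\mathrm{aut}(F_4)|=12$ and $6!/12=60$, we get
\[
p(F_4,\blow{B}{\V x}) = 60\,x_0^2\bigl(x_1x_2^3+x_2x_1^3\bigr)=60\,x_0^2\,x_1x_2(x_1^2+x_2^2).
\]
Write $s:=x_1+x_2$ and $d:=x_1-x_2$. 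Then $x_1x_2(x_1^2+x_2^2)=(s^4-d^4)/8\le s^4/8$, with equality if and only if $x_1=x_2$. By Proposition~\ref{pr:F2} with $(a,b)=(2,4)$, the product $x_0^2s^4=x_0^2(1-x_0)^4$ is uniquely maximised at $x_0=1/3$. Hence $(1/3,1/3,1/3)$ is the unique maximiser, and the maximum value is $60\cdot\frac19\cdot\frac{16}{81}\cdot\frac18=\frac{40}{243}$.

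\emph{Hypotheses of Theorem~\ref{th:PST7.1}.}
\begin{itemize}
\item Item~\ref{it:PST7.11}: this is the computer-generated flag algebra certificate with $\N=6$ proving $\lambda_{F_4}\le 40/243$.
\item Item~\ref{it:PST7.12a}: a graph with no induced edge is empty, and the empty graph has zero $F_4$-density.
\item Item~\ref{it:PST7.12b}: an edge must map onto the pair $\{1,2\}$, because $0$ is adjacent to nothing and neither $1$ nor $2$ carries a loop. This homomorphism is unique up to the automorphism swapping $1$ and $2$.
\item Item~\ref{it:PST7.12c}: the neighbourhoods of $0$, $1$, $2$ in the image $\{1,2\}$ are $\emptyset$, $\{2\}$ and $\{1\}$, which are pairwise distinct.
\item Item~\ref{it:PST7.13}: every $6$-vertex graph with zero slack must admit a homomorphism to $B$. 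This is a finite check performed by the verification script.
\item Item~\ref{it:PST7.1ii}: all vertices of $B$ are non-loops, and $B^\circ=K_2\sqcup K_1$. Graphs with no induced $K_2\sqcup K_1$ are exactly the complete multipartite graphs. Since $F_4$ contains an induced $K_2\sqcup K_1$, it never occurs in such graphs, so the restricted limit is $0<40/243$.
\end{itemize}

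The main obstacle is Item~\ref{it:PST7.11}, together with the zero-slack check in Item~\ref{it:PST7.13}. Both depend on obtaining an exact rational certificate from the SDP. I would first round the numerical solution, projecting it onto the kernel forced by the extremal blowup at $\V a$, and then confirm Items~\ref{it:PST7.11} and~\ref{it:PST7.13} with \texttt{FlagAlgebraToolbox}. All the remaining hypotheses follow from the elementary arguments above.
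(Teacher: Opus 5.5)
Your proposal is correct and matches the paper's proof. The hypotheses of Theorem~\ref{th:PST7.1} with Item~\ref{it:PST7.1ii} are supplied by the certificate and verification script, and your hand checks of Items~\ref{it:PST7.12a}--\ref{it:PST7.12c} and~\ref{it:PST7.1ii} are all valid; uniqueness of $(1/3,1/3,1/3)$ comes from $60x_0^2x_1x_2(x_1^2+x_2^2)$ by forcing $x_1=x_2$ and then applying Proposition~\ref{pr:F2} to $x_0^2(1-x_0)^4$. The only cosmetic difference is that you force $x_1=x_2$ via the identity $x_1x_2(x_1^2+x_2^2)=(s^4-d^4)/8$, whereas the paper uses that the derivative of $y^3(1-y)+y(1-y)^3$ has a triple root at $1/2$ together with homogeneity.
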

\begin{proof}
 We need to prove only that the given vector $\V x$ is the unique maximiser. Notice that $F \coloneqq F_4$ is the star $T_{1, 3}$ with two isolated vertices. Also, every embedding $f$ of $F$ into $\blow{B}{V_0,V_1,V_2}$ satisfies $f(3),f(5)\in V_0$, $f(0)\in V_i$ and $f(1), f(2), f(4) \in V_{3-i}$ for some $i\in \{1,2\}$ (and, conversely, every such injective map gives an embedding). Thus, 
 we get by~\eqref{eq:t} that 
 \[
 p(F,\blow{B}{\V y}) = \frac{6!}{3!\,2!} y_0^2 (y_1 y_2^3 + y_1^3 y_2) = 60 y_0^2 \paren{y_1^3y_2 + y_1y_2^3},\quad \mbox{for }\V y\in\I S_3.
 \]
 The derivative of $y^3(1-y)+y(1-y)^3$ has a triple root at $1/2$ so $y=1/2$ is the unique argument maximising it on $[0,1]$ (or even on $\I R$). Thus, by the homogeneity of the polynomial $p(F,\B y)$, any optimal vector $\B y
 \in\I S_3$ satisfies $y_1=y_2=(1-y_0)/2$ and we have
 $p(F,\blow{B}{\B y})=\frac{15}2\, y_0^2(1-y_0)^4$. 
 By \Cref{pr:F2}, $y_0=1/3$ is the unique maximiser on $[0,1]$ of the last polynomial.
 \hide{The last polynomial is easily seem to be maximised on $[0,1]$ for and only for $y_0=1/3$ by the geometric-arithmetic means inequality (or by just computing that the roots of its derivative are $0$, $1/3$ and a triple root at 1). We conclude}%
 Thus $\paren{1/3, 1/3, 1/3}\in\I S_3$ is a the unique maximiser of $p(F,\blow{B}{\B y})$, as desired.
\hide{We will only prove that $\V x$ is the unique maximiser. Notice that $F \coloneqq F_4$ is the star $K_{1, 3}$ with two isolated vertices. Let $V_0, V_1$ and $V_2$ be the parts of $B$, where $V_1$ and $V_2$ induce a complete bipartite graph. Notice that every embedding $f$ of $F$ in $B$ satisfies $f(3), f(5) \in V_0$. Also, either $f(0) \in V_1$ and $f(1), f(2), f(4) \in V_2$, or $f(0) \in V_2$ and $f(1), f(2), f(4) \in V_1$. There are $6! / \paren{2! \cdot 3!} = 60$ ways to arrange $6$ vertices in each of these ways. Hence, for real numbers $x_0, x_1, x_2 \geq 0$ with $x_0 + x_1 + x_2 = 1$ we have $p\paren{F, B\paren{x_0, x_1, x_2}} = 60 x_0^2 x_1 x_2^3 + 60 x_0^2 x_1^3 x_2 = 60 x_0^2 x_1 x_2 \paren{x_1^2 + x_2^2} = 60 f_1(x_0, x_1, x_2)$, where $f_1$ is the function in \cref{item:f1-maximiser} of \cref{prop:optimisation-simplex}. We conclude that the unique maximiser of $p\paren{F, B\paren{x_0, x_1, x_2}}$ is $\paren{x_0, x_1, x_2} = \paren{1/3, 1/3, 1/3}$.
}
\end{proof}

\begin{theorem}
    \label{thm:graph_6}
    \rationalConditionIPartI{6}{6}{01, 05, 15}{5/16}{0/1,0/5,1/5}{}
    \rationalConditionIPartII{2}{00}{$1/2,1/2$}{6}{2}{01}{0/0}{6}
\end{theorem}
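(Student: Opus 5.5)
Theorem~\ref{thm:graph_6} concerns $F_6=(6,\{01,05,15\})$, a triangle together with three isolated vertices. The pattern is $B=([2],\{00\})$: one looped vertex (a clique $V_0$) and one unlooped vertex (an independent set $V_1$), with no edges between them. The proof will apply Theorem~\ref{th:PST7.1} with Item~\ref{it:PST7.1i}, because that item also yields uniqueness of the maximiser. No separate optimisation argument is then needed, although we will record the easy computation as a sanity check.

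\textbf{Lower bound.} In $\blow{B}{x_0,x_1}$, an induced copy of $F_6$ must place the triangle inside the clique $V_0$. The three isolated vertices can have at most one vertex in $V_0$, but if one lies in $V_0$ it is adjacent to the triangle. So all three lie in $V_1$, and they are pairwise non-adjacent there, as required. Hence
\[
p(F_6,\blow{B}{\V x})=\binom{6}{3}x_0^3x_1^3=20x_0^3x_1^3 .
\]
By \Cref{pr:F2} this is uniquely maximised at $x_0=x_1=1/2$, with value $20/64=5/16$.

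\textbf{Flag algebra certificate.} We will run \texttt{FlagAlgebraToolbox} with $\N=6$ and round the SDP solution to an exact rational certificate satisfying~\eqref{eq:FAMain} with $u=5/16$. This gives Item~\ref{it:PST7.11}. For Item~\ref{it:PST7.12} we take $\tau=([2],\{01\})$, a single edge.
\begin{itemize}
\item[(a)] $\tau$ is an induced subgraph of $F_6$, so $\tau$-free graphs contain no copy of $F_6$ and Condition~\ref{it:PST7.12a} holds trivially.
\item[(b)] Any homomorphism of an edge to $B$ must send both ends to the looped vertex $0$, since the only edge of $B$ is the loop $00$. So the homomorphism is unique.
\item[(c)] The image is $\{0\}$. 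Vertex $0$ has neighbourhood $\{0\}$ there and vertex $1$ has $\emptyset$, so they are distinguished.
\end{itemize}

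\textbf{Remaining checks.} Item~\ref{it:PST7.13} is verified by computer. We list every $6$-vertex graph $H$ with zero slack $c_H$ and check that each is a disjoint union of a clique and an independent set, i.e.\ admits a homomorphism to $B$. Finally, we check that the rational matrix $X^\tau$ for the edge type has co-rank exactly $1$, which is Item~\ref{it:PST7.1i}.

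\textbf{Main obstacle.} The difficult step is obtaining an exact certificate that simultaneously gives the sharp bound $5/16$ and has corank exactly $1$ for $X^{\tau}$. Naive rounding tends to either perturb the kernel or create small negative slacks. The approach is to first project the numerical solution onto the kernel predicted by the extremal construction. For the edge type, that kernel is spanned by the limiting flag-density vector for a root pair inside $V_0$. Only after this projection do we round and verify positive semidefiniteness exactly. If $\N=6$ turns out to be insufficient, we fall back to $\N=7$ or change the choice of $\tau$.
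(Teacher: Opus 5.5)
Your proposal is correct and takes the paper's route. The paper gives no written argument for this theorem: it rests entirely on the computer-verified flag algebra certificate with $\N=6$ and $\tau=([2],\{01\})$, using Theorem~\ref{th:PST7.1}\,\ref{it:PST7.1i} (co-rank $1$ of $X^\tau$) for both perfect stability and uniqueness of $(1/2,1/2)$. Your hand checks of Conditions~\ref{it:PST7.12a}--\ref{it:PST7.12c} and of $p(F_6,\blow{B}{\V x})=20x_0^3x_1^3\le 5/16$ are accurate, though the latter is only a sanity check since Item~\ref{it:PST7.1i} already gives uniqueness.
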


\begin{theorem}
    \label{thm:graph_7}
    \rationalConditionIPartI{7}{6}{02, 04, 15}{160/729}{0/2,0/4,1/5}{ by flag algebra computations using $7$ vertices}
    \rationalConditionIPartII{9}{02, 01, 12, 34, 35, 45, 67, 68, 78}{$1/9,1/9,1/9,1/9,1/9,1/9,1/9,1/9,1/9$}{8}{6}{02,13,45}{0/2,0/1,1/2,3/4,3/5,4/5,6/7,6/8,7/8}{7}
\end{theorem}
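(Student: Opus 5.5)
The plan is to assemble this from a computer-generated flag algebra certificate and Theorem~\ref{th:PST7.1}, in the same way as for the other results in this subsection. For the lower bound, take $B=(9,\{01,02,12,34,35,45,67,68,78\})$, which is three disjoint triangles. Its blowup is the disjoint union of three complete tripartite graphs. Since $F_7=(6,\{02,04,15\})$ is a cherry $2$--$0$--$4$ plus a disjoint edge $15$ plus an isolated vertex $3$, an embedding into $\blow{B}{\V x}$ works as follows. The cherry goes into one triangle, with the centre in one part and both leaves in a second part. The edge $15$ goes into a different triangle, using two of its parts. Vertex $3$ then has no neighbours among the images, so it must go to a part of the third triangle.

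With all parts equal to $1/9$, I will count the homomorphisms. The cherry has $3\cdot3\cdot 2=18$ placements (choice of triangle, centre part, leaf part). The edge has $2\cdot 6=12$ ordered placements (remaining triangle, ordered pair of parts), and vertex $3$ has $3$. This gives $18\cdot12\cdot3=648$ homomorphisms, each of probability $9^{-6}$. Hence $t(F_7,\blow{B}{\V x})=648/9^6$. Here $|\mathrm{aut}(F_7)|=2\cdot2=4$ (swap the leaves, swap $1\leftrightarrow5$), so by~\eqref{eq:t} we get $p=\frac{720}{4}\cdot\frac{648}{531441}=160/729$, as claimed.

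For the upper bound, I will use \texttt{FlagAlgebraToolbox} to solve the semidefinite program~\eqref{eq:FAMain} with $\N=7$ and $u=160/729$. The floating-point solution will then be rounded to an exact rational certificate, projecting onto the kernel vectors forced by the extremal construction so that the zero pattern is exact. This gives $\lambda_{F_7}\le 160/729$.

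For perfect stability, I will rerun the program with $\N=8$, since $7$ vertices apparently do not suffice for the stability conditions. I will check Items~\ref{it:PST7.11}--\ref{it:PST7.13} with $\tau=(6,\{02,13,45\})$, the perfect matching on six vertices.

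\begin{itemize}
\item For~\ref{it:PST7.12a}: $\tau$ is not an induced subgraph of $F_7$, so I need a separate bound. I will run an auxiliary flag algebra computation on $\tau$-free graphs and show that its optimum is strictly below $160/729$. Intuitively, $\tau$-freeness kills the three-triangle structure.
\item For~\ref{it:PST7.12b}: a homomorphism from $\tau$ sends each matching edge into a triangle, using two distinct parts, since non-loops cannot hold an edge. Non-adjacency between different matching edges then forces them into different triangles, or into equal-part configurations, and these must be ruled out. I will argue directly that, up to automorphisms of $B$, the only option is one edge per triangle. The automorphism group (permuting triangles and the parts within each) then makes it unique.
\item For~\ref{it:PST7.12c}: under this homomorphism each triangle has two of its parts hit and one unused. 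Within a triangle the three parts have distinct traces: the two used parts see each other's image, and the unused part sees both images. Across triangles, parts have neighbours only in their own triangle, so their traces are disjoint and nonempty. Hence the traces are distinct.
\item For~\ref{it:PST7.13}: I will have the computer verify that every $8$-vertex graph with zero slack admits a homomorphism to $B$.
\end{itemize}

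Finally, Item~\ref{it:PST7.1i} requires $\tau$ to appear as a type in the certificate with $X^\tau$ of corank $1$. Since $|\tau|=6$ and $\N=8$ have the same parity, $\tau$ is an admissible type. I will steer the rounding so that $X^\tau$ has exactly a one-dimensional kernel, which also gives uniqueness of $\V x$.

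I expect the main obstacle to be the exact rounding at $\N=8$ with the corank-$1$ condition on $X^\tau$ and the extra zero-slack constraints. The $\tau$-free bound~\ref{it:PST7.12a} is a secondary difficulty; I would try $\N=7$ or $8$ there first.
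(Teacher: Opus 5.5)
Your proposal is correct and follows the same route as the paper. The bound $\lambda_{F_7}\le 160/729$ comes from a computer-verified flag algebra certificate with $N=7$, and a second certificate with $N=8$ satisfies Theorem~\ref{th:PST7.1}\,\ref{it:PST7.1i} with $\tau=([6],\{02,13,45\})$, which also gives uniqueness of the uniform maximiser. Your explicit lower-bound count, your direct checks of Items~\ref{it:PST7.12b} and~\ref{it:PST7.12c}, and your observation that Item~\ref{it:PST7.12a} is not automatic here (since $\tau\cong 3K_2$ is not an induced subgraph of $F_7$, so a separate $\tau$-free computation is needed) spell out what the paper leaves to its verification script.
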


\begin{theorem}
    \label{thm:graph_8}
    \rationalConditionIPartI{8}{6}{02, 13, 45}{10/81}{0/2,1/3,4/5}{}
    \rationalConditionIPartII{3}{00, 11, 22}{$1/3,1/3,1/3$}{6}{2}{}{0/0,1/1,2/2}{8}
\end{theorem}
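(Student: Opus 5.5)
The plan is to apply Theorem~\ref{th:PST7.1} with Item~\ref{it:PST7.1i}. That single route gives the value of $\lambda_{F_8}$, perfect $B$-stability, and uniqueness of the maximising vector $\V x=(1/3,1/3,1/3)$ (up to automorphisms of $B$) all at once, so no separate optimisation argument is needed. Here $F_8=3K_2$ is the perfect matching on six vertices, the complement of $T_{2,2,2}$. The pattern $B=([3],\{00,11,22\})$ consists of three isolated looped vertices, so its blowups are disjoint unions of three cliques.

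\textbf{Lower bound.} Take the blowup $\blow{B}{\V x}$ with $\V x=(1/3,1/3,1/3)$. In a disjoint union of cliques, any two non-adjacent vertices lie in different cliques. Hence an embedding of $3K_2$ must send its three edges to three distinct cliques, and every such injective assignment is an embedding. This gives $t(F_8,\blow{B}{\V x})=3!\,(1/3)^6$. Since $|\mathrm{aut}(3K_2)|=48$, identity~\eqref{eq:t} gives
\[
p(F_8,\blow{B}{\V x})=\tfrac{6!}{48}\cdot\tfrac{6}{729}=\tfrac{10}{81}.
\]
Item~\ref{it:PST7.11} then requires a flag algebra certificate with $\N=6$ proving $\lambda_{F_8}\le 10/81$. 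This will be generated and verified with \texttt{FlagAlgebraToolbox}.

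\textbf{Structural conditions.} Take $\tau=([2],\{\})$, two non-adjacent roots.
\begin{itemize}
\item Condition~\ref{it:PST7.12a} is trivial: $\tau$ is an induced subgraph of $F_8$, so any $\tau$-free graph has $F_8$-density zero.
\item For~\ref{it:PST7.12b}: any homomorphism of $\tau$ into $B$ must send the two roots to distinct loops, because two vertices mapped to the same loop would be adjacent. So the homomorphism is unique up to an automorphism of $B$.
\item For~\ref{it:PST7.12c}: with roots mapped to vertices $0$ and $1$, the neighbourhoods of $0,1,2$ in the image are $\{0\}$, $\{1\}$ and $\emptyset$, which are pairwise distinct.
\item Condition~\ref{it:PST7.13} asks that every $6$-vertex graph with zero slack be a disjoint union of at most three cliques. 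This is a finite check on the certificate's slack vector.
\item Item~\ref{it:PST7.1i} asks that the matrix $X^\tau$ for this $\tau$ have co-rank exactly $1$. This is again a finite check.
\end{itemize}

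\textbf{Main obstacle.} The real difficulty is producing an exact, rational version of the $\N=6$ certificate. The floating-point SDP solution must be rounded so that three things hold simultaneously. First, identity~\eqref{eq:FAMain} holds with $u=10/81$ exactly. Second, the slacks are zero precisely on graphs homomorphic to $B$ and strictly positive elsewhere. Third, $X^\tau$ stays positive semi-definite with a one-dimensional kernel.

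I would handle this by projecting onto the subspace forced by the extremal construction. The vector of $\tau$-flag densities in $\blow{B}{\V x}$ must lie in $\ker X^\tau$, and the slacks of all blowup subgraphs must vanish. After imposing these constraints, I would round within that subspace and then verify positive semi-definiteness and the rank condition in exact arithmetic.
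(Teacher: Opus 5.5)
Your proposal takes the same approach as the paper. The paper proves this theorem with a computer-verified flag algebra certificate for $N=6$ that satisfies Theorem~\ref{th:PST7.1} via Item~\ref{it:PST7.1i} with $\tau=([2],\emptyset)$, which gives the value $10/81$, perfect $B$-stability and uniqueness of $(1/3,1/3,1/3)$ at once. Your hand computation of $10/81$ and your checks of Conditions~\ref{it:PST7.12a}--\ref{it:PST7.12c} are correct. The remaining conditions (the exact identity, the slack condition~\ref{it:PST7.13} and co-rank $1$ of $X^\tau$) are exactly what the paper's script verifies.
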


\begin{theorem}
    \label{thm:graph_12}
    \rationalConditionIPartI{12}{6}{02, 03, 04, 15}{40/243}{0/2,0/3,0/4,1/5}{}
    \rationalConditionIPartII{3}{00, 12}{$1/3,1/3,1/3$}{6}{4}{01,02,03,12,13,23}{0/0,1/2}{12}
\end{theorem}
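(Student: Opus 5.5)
The proof is a computer-assisted application of Theorem~\ref{th:PST7.1} via Item~\ref{it:PST7.1i}. That item also yields uniqueness of the maximising vector $\V x$, so no separate optimisation argument like the one for \Cref{thm:graph_4} should be needed.

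\textbf{Lower bound.} Note that $F_{12}=K_{1,3}\sqcup K_2$ with centre $0$, leaves $2,3,4$ and extra edge $15$. In a blowup $\blow{B}{V_0,V_1,V_2}$ of $B=([3],\{00,12\})$, the part $V_0$ induces a clique and $V_1\cup V_2$ induces a complete bipartite graph, with no edges between $V_0$ and $V_1\cup V_2$. The star cannot meet the clique $V_0$, so it lies in $V_1\cup V_2$ with its centre on one side and its three leaves on the other. The $K_2$ cannot lie in $V_1\cup V_2$, since it would then be adjacent to the star, so it lies in $V_0$. Counting these assignments as in the proof of \Cref{thm:graph_4} gives
\[
p(F_{12},\blow{B}{\V y})=60\,y_0^2\paren{y_1^3y_2+y_1y_2^3}.
\]
This is exactly the polynomial from \Cref{thm:graph_4}, so at $(1/3,1/3,1/3)$ it equals $40/243$. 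This gives $\lambda_{F_{12}}\ge 40/243$.

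\textbf{Checking the hypotheses of Theorem~\ref{th:PST7.1}.}
\begin{enumerate}[1)]
\item Item~\ref{it:PST7.11}: I would run \texttt{FlagAlgebraToolbox} with $\N=6$ to produce a certificate~\eqref{eq:FAMain} with $u=40/243$. I would round it to exact rational positive semi-definite matrices $X^\tau$ and non-negative slacks $c_H$, and verify the identity exactly.
\item Item~\ref{it:PST7.13}: for each $H\in\C F_6^0$ with $c_H=0$, I would check by finite search that $H$ admits a homomorphism to $B$.
\item Item~\ref{it:PST7.12}, parts~\ref{it:PST7.12b} and~\ref{it:PST7.12c}: for the chosen $\tau=K_4$, I would check by finite enumeration over maps $[4]\to[3]$ that the homomorphism $\tau\to B$ is unique up to automorphisms of $B$, and that the vertices of $B$ have pairwise distinct traces of neighbourhoods on its image.
\item Item~\ref{it:PST7.1i}: I would read off from the certificate that $X^{K_4}$ has co-rank exactly~$1$.
\end{enumerate}

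\textbf{Main obstacle.} I expect the hardest step to be condition~\ref{it:PST7.12a}. Since $K_4$ is not a subgraph of $F_{12}$, this condition is not automatic. I would handle it with a second flag algebra computation restricted to $K_4$-free graphs, that is, one in which all $H\in\C F_6^0$ containing $K_4$ are removed. The aim is an upper bound strictly below $40/243$.

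Heuristically this should work. Without large cliques, the $K_2$ component has to live in a sparse or bipartite-like region, and such a region competes with the star for structure. Any rigorous bound with a strict gap suffices, since only the strict inequality~\eqref{eq:PST7.1a} is needed. If $N=6$ proves too weak for this auxiliary problem, I would raise $N$ there, since it is independent of the main certificate.
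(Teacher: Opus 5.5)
You follow the paper's route (a computer-verified certificate plus Theorem~\ref{th:PST7.1}), but the step you treat as a routine enumeration fails. For $\tau=K_4$, condition~\ref{it:PST7.12c} of Theorem~\ref{th:PST7.1} is false.

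Every pair in $K_4$ is an edge, so a homomorphism $f$ of $K_4$ to $B=([3],\{00,12\})$ must send each pair to the loop $00$ or to the edge $12$. If some vertex went to $1$ (or $2$), the other three would all have to go to $2$ (resp.\ $1$), which carries no loop. Hence $f\equiv 0$ and its image is $\{0\}$. But then $\Gamma_B(1)\cap\{0\}=\emptyset=\Gamma_B(2)\cap\{0\}$. Concretely, a typical $K_4$ lies in the clique part $V_0$, and its roots see neither $V_1$ nor $V_2$, so they cannot separate these two parts, which is exactly what the stability argument needs.

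So no certificate, co-rank check, or auxiliary $K_4$-free computation for condition~\ref{it:PST7.12a} can make Theorem~\ref{th:PST7.1} apply with this $\tau$. Taken at face value, the $\tau$ printed in the statement cannot work and appears to be a misprint. Theorems~\ref{thm:graph_28} and~\ref{thm:graph_31}, which have the same $B$, use $\tau=([4],\{01\})$.

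The repair is easy, and it also removes what you called the main obstacle. Take $\tau=([4],\{01,02,03\})\cong K_{1,3}$, which is induced in $F_{12}$ on $\{0,2,3,4\}$, so condition~\ref{it:PST7.12a} is trivial.
\begin{itemize}
\item Its centre cannot go to $0$, since the leaves would be forced onto the loop at $0$ and become adjacent.
\item So, up to swapping $1$ and $2$, the unique homomorphism sends the centre to $1$ and the leaves to $2$.
\item On the image $\{1,2\}$, the traces $\emptyset,\{2\},\{1\}$ of $\Gamma_B(0),\Gamma_B(1),\Gamma_B(2)$ are distinct.
\end{itemize}
The type $([4],\{01\})$, induced in $F_{12}$ on $\{1,5,2,3\}$, works equally well.

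You then have two options. You can check that $X^\tau$ has co-rank $1$ for this $\tau$ in the $N=6$ certificate. Or you can avoid any co-rank condition by using Item~\ref{it:PST7.1iii}. Your identity $p(F_{12},B(\V y))=60y_0^2(y_1^3y_2+y_1y_2^3)$ is exactly the polynomial handled in the proof of \Cref{thm:graph_4}, so $(1/3,1/3,1/3)$ is its unique maximiser. Since that maximiser is interior, $B$ is $F_{12}$-minimal, and the uniqueness claim of the theorem follows as well.

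Your lower-bound computation and your plans for Items~\ref{it:PST7.11} and~\ref{it:PST7.13} are fine as stated.
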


\begin{theorem}
    \label{thm:graph_14}
    \rationalConditionIPartI{14}{6}{02, 05, 12, 15}{10/81}{0/2,0/5,1/2,1/5}{}
    \rationalConditionIPartII{3}{12}{$1/3,1/3,1/3$}{6}{2}{01}{1/2}{14}
\end{theorem}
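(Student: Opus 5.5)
The plan is to apply \Cref{th:PST7.1} with \Cref{it:PST7.1i}. Since \Cref{it:PST7.1i} also yields uniqueness of the maximiser, no separate optimisation argument will be needed, unlike in \Cref{thm:graph_4}.

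\textbf{Lower bound.} Note that $F_{14}$ is the $4$-cycle $0\,2\,1\,5$ together with two isolated vertices $3,4$. Take $B=([3],\{12\})$ with parts $V_0,V_1,V_2$. Every embedding of $F_{14}$ into $\blow{B}{V_0,V_1,V_2}$ must do the following:
\begin{itemize}
\item send the isolated vertices to $V_0$, since every vertex in $V_1\cup V_2$ is adjacent to all of the opposite part;
\item send the $4$-cycle onto $K_{2,2}$ with one antipodal pair in $V_1$ and the other in $V_2$.
\end{itemize}
Counting such vertex assignments via~\eqref{eq:t} gives
\[
p(F_{14},\blow{B}{\V y})=\frac{6!}{2!\,2!\,2!}\,y_0^2y_1^2y_2^2=90\,y_0^2y_1^2y_2^2 .
\]
At $\V y=(1/3,1/3,1/3)$ this equals $90/729=10/81$, so $\lambda_{F_{14}}\ge 10/81$.

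\textbf{Upper bound.} Using the package \texttt{FlagAlgebraToolbox}, I would produce a flag algebra certificate $\C C$ with $\N=6$ as in~\eqref{eq:FAMain}, proving $\lambda_{F_{14}}\le 10/81$. The numerical SDP solution has to be rounded to an exact rational one. To do this while keeping the bound tight, I would project onto the subspace forced by complementary slackness with the extremal construction. Concretely:
\begin{itemize}
\item set $c_H=0$ for every $6$-vertex $H$ with positive density in the blowup;
\item impose that each $X^\tau$ annihilates the limiting flag vectors of the construction;
\item then round and check positive semidefiniteness and non-negativity of the slacks exactly.
\end{itemize}
This rounding step is where I expect the main obstacle. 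The difficulty is ensuring the rounded matrix for the edge type still has exactly a one-dimensional kernel, rather than a larger one after rounding.

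\textbf{Conditions of \Cref{th:PST7.1}.} Take $\tau=([2],\{01\})$ and check each item.
\begin{itemize}
\item \Cref{it:PST7.12a} holds trivially. $\tau$ is a subgraph of $F_{14}$, so a graph with no induced copy of $\tau$ is edgeless and has zero $F_{14}$-density.
\item \Cref{it:PST7.12b} holds. $B$ has no loops and its only edge is $12$, so any homomorphism of an edge sends its endpoints to $1$ and $2$. This is unique up to the automorphism swapping $1$ and $2$.
\item \Cref{it:PST7.12c} holds. The neighbourhoods within $\{1,2\}$ of the vertices $0,1,2$ of $B$ are $\emptyset$, $\{2\}$ and $\{1\}$, which are pairwise distinct.
\item \Cref{it:PST7.13} is checked by computer. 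For every $H\in\C F_6^0$ with $c_H=0$, we verify that $H$ admits a homomorphism to $B$, i.e.\ that $H$ is a complete bipartite graph plus isolated vertices.
\item \Cref{it:PST7.1i} is checked by computer. The matrix $X^\tau$ for the edge type has corank $1$.
\end{itemize}
\Cref{th:PST7.1} then gives perfect $B$-stability, and shows that $(1/3,1/3,1/3)$ is the unique maximiser up to automorphisms of $B$.
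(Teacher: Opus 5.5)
Your proposal is correct and follows the paper's route exactly. The paper obtains this theorem from a computer-verified flag algebra certificate with $\N=6$ that satisfies \Cref{th:PST7.1} for $\tau=([2],\{01\})$ via Item~\ref{it:PST7.1i} (co-rank $1$ of $X^\tau$), and this yields the value $10/81$, perfect $B$-stability and uniqueness of $(1/3,1/3,1/3)$ without any separate optimisation. Your explicit formula $p(F_{14},\blow{B}{\V y})=90\,y_0^2y_1^2y_2^2$ and your hand verification of Items 2(a)--(c) are correct details that the paper leaves implicit.
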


\begin{theorem}
    \label{thm:graph_15}
    \rationalConditionIPartI{15}{6}{03, 04, 12, 15}{45/256}{0/3,0/4,1/2,1/5}{}
    \rationalConditionIPartII{4}{02, 13}{$1/4,1/4,1/4,1/4$}{6}{4}{02,13}{0/2,1/3}{15}
\end{theorem}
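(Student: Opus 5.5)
The plan is to apply \Cref{th:PST7.1} with Item~\ref{it:PST7.1i}. Its ``moreover'' clause then gives the uniqueness of the maximiser $\V x=(1/4,1/4,1/4,1/4)$ for free, so no separate optimisation argument is needed. Note that $F_{15}=(6,\{03,04,12,15\})$ is the disjoint union of two cherries, $3\,0\,4$ and $2\,1\,5$. The pattern $B=([4],\{02,13\})$ is two disjoint loopless edges, so its blowups are disjoint unions of two complete bipartite graphs.

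\emph{Lower bound.} A cherry inside a blowup of $B$ must lie within one component $V_0\cup V_2$ or $V_1\cup V_3$. Two cherries in the same component would create cross adjacencies, so an induced copy of $F_{15}$ must have exactly one cherry in each component. Conversely, any such pair of cherries is an induced copy. With all parts of size $n/4$, each component contains $2\cdot\frac n4\binom{n/4}{2}\approx (n/4)^3$ cherries. Hence
\[
P(F_{15},G)\approx (n/4)^6 \quad\text{and}\quad p(F_{15},G)\to \frac{720}{4096}=\frac{45}{256}.
\]
This shows $\lambda_{F_{15}}\ge p(F_{15},\blow{B}{\V x})=45/256$.

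\emph{Upper bound and Items~\ref{it:PST7.11}, \ref{it:PST7.13}, \ref{it:PST7.1i}.} I would use \texttt{FlagAlgebraToolbox} with $\N=6$ to produce a certificate~\eqref{eq:FAMain} for $u=45/256$. The floating-point SDP solution is rounded to exact rational positive semidefinite matrices, projecting onto the subspace forced by the extremal construction so that the sharp bound survives rounding. By computer I would then verify the remaining certificate conditions:
\begin{enumerate}[(1)]
\item every $H\in\C F^0_6$ with $c_H=0$ admits a homomorphism to $B$;
\item the matrix $X^\tau$ for $\tau=([4],\{02,13\})$ has co-rank exactly~$1$.
\end{enumerate}
I expect this step to be the main obstacle: the rounding must keep the bound exact while not introducing extra zero slacks or extra kernel vectors. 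If co-rank~$1$ fails at $\N=6$, I would first try adding the blowup-density vectors of $B$ as forced kernel constraints in the SDP, and only then move to $\N=7$.

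\emph{Item~\ref{it:PST7.12} for $\tau=2K_2$.}
\begin{enumerate}[(a)]
\item The vertices $0,3,1,2$ induce $2K_2$ in $F_{15}$, so $\tau$ is an induced subgraph of $F_{15}$. Hence $\tau$-free graphs have $F_{15}$-density zero, and \eqref{eq:PST7.1a} holds trivially.
\item Since $B$ has no loops, each edge of $\tau$ must map onto an edge $02$ or $13$ of $B$. The two edges of $\tau$ cannot go to the same component, since otherwise some cross pair would become adjacent. So every homomorphism is a bijection onto $V(B)$ sending the two edges to the two components, and all such maps are equivalent under $\mathrm{aut}(B)$.
\item The image is all of $V(B)$, and the neighbourhoods $\{2\},\{3\},\{0\},\{1\}$ of the four vertices are pairwise distinct.
\end{enumerate}
With all conditions in place, \Cref{th:PST7.1} yields perfect $B$-stability and uniqueness of $\V x$.
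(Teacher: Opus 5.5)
Your proposal is correct and follows the paper's route. The paper also obtains the result from Theorem~\ref{th:PST7.1}\,\ref{it:PST7.1i}, using an $\N=6$ flag algebra certificate (checked by the accompanying script) for the sharp upper bound, for the zero-slack homomorphism condition, and for co-rank~$1$ of $X^\tau$ with $\tau=([4],\{02,13\})$; uniqueness of $\V x$ then comes from the theorem's ``moreover'' clause, as you say. Your hand checks of the lower bound $45/256$ and of Items~\ref{it:PST7.12a}--\ref{it:PST7.12c} are accurate, and they just make explicit what the paper leaves to the script.
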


\begin{theorem}
    \label{thm:graph_17}
    \rationalConditionIPartI{17}{6}{01, 03, 13, 45}{40/81}{0/1,0/3,1/3,4/5}{}
    \rationalConditionIPartII{3}{00, 11, 22}{$1/3,1/3,1/3$}{6}{2}{}{0/0,1/1,2/2}{17}
\end{theorem}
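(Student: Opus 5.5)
The plan is to apply Theorem~\ref{th:PST7.1} with the computer certificate, in the same way as for the other cases in this section. Under Item~\ref{it:PST7.1i}, the uniqueness of the maximiser is also delivered by the theorem, so no separate polynomial optimisation is needed. Note that $F_{17}$ is a triangle on $\{0,1,3\}$, a disjoint edge $45$ and an isolated vertex~$2$. Equivalently, $\O{F}_{17}=T_{3,2,1}$, and $B=(3,\{00,11,22\})$ corresponds to blowups that are disjoint unions of three cliques.

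\emph{Lower bound.} In $\blow{B}{\V y}$, every embedding of $F_{17}$ places the triangle inside one part, the edge inside a second part and the isolated vertex in the third. Since $|\mathrm{aut}(F_{17})|=3!\cdot 2=12$, this gives
\[
p(F_{17},\blow{B}{\V y})=\frac{6!}{3!\,2!\,1!}\sum_{\sigma} y_{\sigma(0)}^3y_{\sigma(1)}^2y_{\sigma(2)},
\]
where the sum is over all permutations $\sigma$ of $[3]$. At $\V y=(1/3,1/3,1/3)$ this equals $60\cdot 6/729=40/81$. So the certificate only has to prove the matching upper bound $\lambda_{F_{17}}\le 40/81$.

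\emph{Hypotheses of Theorem~\ref{th:PST7.1}.} I would take $\tau=([2],\{\})$, the non-edge, and check the items in turn.
\begin{enumerate}[1)]
\item Item~\ref{it:PST7.11} is the existence of the $\N=6$ certificate. This needs an exact (rational) rounding of the SDP solution.
\item For~\ref{it:PST7.12a}: a $\tau$-free graph is complete, so it contains no induced $F_{17}$, and the restricted limit is $0<40/81$.
\item For~\ref{it:PST7.12b}: a non-edge cannot map into a single looped vertex, so any homomorphism of $\tau$ to $B$ sends its two vertices to distinct parts. This is unique up to the automorphisms of $B$, which permute the three loops.
\item For~\ref{it:PST7.12c}: with image $\{0,1\}$, the three vertices of $B$ have traces $\{0\}$, $\{1\}$ and $\emptyset$ on the image, which are distinct.
\item For Item~\ref{it:PST7.13}: this is a finite computer check that every $6$-vertex graph $H$ with zero slack $c_H=0$ is a disjoint union of at most three cliques.
\item Finally, Item~\ref{it:PST7.1i} requires checking that $X^\tau$ is in the certificate and has co-rank exactly~$1$.
\end{enumerate}

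\emph{Expected obstacle.} The main difficulty is computational rather than conceptual. The rounded certificate must simultaneously keep the bound exactly $40/81$, keep the zero-slack set confined to blowups of $B$, and keep the kernel of $X^\tau$ one-dimensional. I would first try rounding the matrices with their kernels forced to contain the vectors of $\tau$-flag densities of the construction. Those vectors must lie in the kernel, and projecting onto their orthogonal complement before rounding should preserve both exactness and co-rank one.
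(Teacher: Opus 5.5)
Your proposal is correct and matches the paper's approach: the theorem follows from Theorem~\ref{th:PST7.1} via Item~\ref{it:PST7.1i}, using an $\N=6$ certificate with $\tau=([2],\{\})$, and you have verified correctly both Items~\ref{it:PST7.12a}--\ref{it:PST7.12c} and the value $p(F_{17},\blow{B}{\V y})=40/81$ at the uniform vector. The remaining ingredients are the certificate itself, the check that every zero-slack graph is a disjoint union of at most three cliques, and the co-rank~$1$ of $X^\tau$; these are exactly the parts the paper delegates to its verification script.
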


\begin{theorem}
    \label{thm:graph_21}
    \rationalConditionIPartI{21}{6}{01, 02, 05, 12, 15}{128/675}{0/1,0/2,0/5,1/2,1/5}{}
    \rationalConditionIPartII{6}{01, 02, 03, 04, 12, 13, 14, 23, 24, 34}{$2/15,2/15,2/15,2/15,2/15,1/3$}{8}{4}{01,02,03,12,13,23}{0/1,0/2,0/3,0/4,1/2,1/3,1/4,2/3,2/4,3/4}{21}
\end{theorem}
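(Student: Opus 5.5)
The plan is to obtain everything from a single application of \Cref{th:PST7.1} through Item~\ref{it:PST7.1i}. The final clause of that theorem then also gives uniqueness of the maximiser $\V x$, so no separate analysis of the optimisation polynomial is needed.

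\textbf{Structure of $F_{21}$ and the lower bound.} The graph $F:=F_{21}$ is the diamond $K_4^-=T_{2,1,1}$ on $\{0,1,2,5\}$, with missing edge $25$, together with the two isolated vertices $3,4$. A blowup of $B$ is a complete $5$-partite graph on $V_0\cup\dots\cup V_4$ plus an independent set $V_5$ with no edges to the rest. In this blowup, every induced copy of $F$ must be placed as follows.
\begin{itemize}
\item[(1)] The two isolated vertices lie in $V_5$, since any vertex of $V_0\cup\dots\cup V_4$ is adjacent to all but its own part.
\item[(2)] The diamond lies in $V_0\cup\dots\cup V_4$, with its non-adjacent pair in a common part and its two other vertices in two further distinct parts.
\end{itemize}
With $|V_5|=y n$ and the other parts equal, this gives
\[
p(F,\blow{B}{\V x})=\binom62 y^2(1-y)^4\cdot\frac{5\cdot 6\cdot 12}{5^4}.
\]
At $y=1/3$ this equals $128/675$. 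In fact it is maximised at $y=1/3$ by \Cref{pr:F2}, and the $2/15$ split is forced by the symmetric structure. This confirms that $\V x=(2/15,\dots,2/15,1/3)$ attains the claimed value.

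\textbf{The certificate and Items~\ref{it:PST7.11} and~\ref{it:PST7.13}.} Next I would produce, with \texttt{FlagAlgebraToolbox}, a certificate with $\N=8$ proving $\lambda_F\le 128/675$. The floating-point SDP solution has to be rounded to an exact rational identity~\eqref{eq:FAMain}. The rounding should be done while projecting onto the space orthogonal to the vectors forced to lie in the kernels by the extremal construction. In particular, for $\tau=K_4$ the kernel should be spanned by the limiting $\tau$-flag density vector of the blowup. With the exact certificate in hand, I would check by computer that every $H\in\C F_8^0$ with $c_H=0$ admits a homomorphism to $B$, which is Item~\ref{it:PST7.13}. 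I would also check that $X^{K_4}$ has co-rank exactly $1$, which is Item~\ref{it:PST7.1i}.

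\textbf{Item~\ref{it:PST7.12} for $\tau=K_4$.} The combinatorial parts of Item~\ref{it:PST7.12} are easy.
\begin{itemize}
\item[(b)] $K_4$ maps into $B$ only by sending its vertices injectively into four of the five looples, pairwise adjacent vertices $0,\dots,4$. Vertex $5$ is isolated, and a non-injective map would identify two adjacent vertices of $K_4$ on a loopless vertex, which is not allowed. Hence the homomorphism is unique up to $\mathrm{aut}(B)$.
\item[(c)] The neighbourhoods of the vertices of $B$ in the image are pairwise distinct. Each image vertex sees exactly the other three image vertices, so any two image vertices already differ. The unused vertex among $0,\dots,4$ sees all four image vertices, and vertex $5$ sees none of them.
\end{itemize}

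\textbf{Main obstacle: Item~\ref{it:PST7.12a}.} The step I expect to be hardest is condition~\ref{it:PST7.12a}. Since $K_4\not\subseteq F$, it is not automatic. I would prove it by a second flag algebra computation in the theory of $K_4$-free graphs, aiming for any rigorous upper bound on the $F$-density strictly below $128/675$. A plausible target is roughly the value $(2/3\cdot\text{balanced tripartite})$ from blowups of $K_3$ plus an isolated part, and even $\N=6$ or $7$ should leave a clear gap. With Items~\ref{it:PST7.11}--\ref{it:PST7.13} and~\ref{it:PST7.1i} all verified, \Cref{th:PST7.1} yields perfect $B$-stability and uniqueness of $\V x$.
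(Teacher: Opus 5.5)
Your proposal is correct and takes essentially the paper's route: the paper's proof of this theorem is just the computer-verified $N=8$ certificate with $\tau=K_4$, checked against Theorem~\ref{th:PST7.1} through Item~\ref{it:PST7.1i} (co-rank~$1$ of $X^{K_4}$ and the zero-slack homomorphism condition), with the uniqueness of $\V x$ taken from that theorem's final clause, exactly as you plan. You are also right that Condition~\ref{it:PST7.12a} is not automatic here, since $K_4$ is not an induced subgraph of $F_{21}$, and a separate flag algebra bound in the $K_4$-free theory is the natural way to settle it; your heuristic target is off, but for comparison, blowups of $K_3$ plus an isolated part give only $960/6561\approx 0.146<128/675$.
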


\begin{theorem}
    \label{thm:graph_24}
    \rationalConditionIPartI{24}{6}{02, 03, 04, 12, 15}{5/54}{0/2,0/3,0/4,1/2,1/5}{}
    \rationalConditionIPartII{6}{01, 12, 23, 34, 45, 50}{$1/6,1/6,1/6,1/6,1/6,1/6$}{6}{4}{01,02,13}{0/1,1/2,2/3,3/4,4/5,5/0}{24}
\end{theorem}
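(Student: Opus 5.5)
The upper bound $\lambda_{F_{24}}\le 5/54$ and perfect stability both come from \Cref{th:PST7.1}, case \ref{it:PST7.1i}. Here $F_{24}=(6,\{02,03,04,12,15\})$ and $B=C_6$ is the loopless $6$-cycle.

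\textbf{Lower bound.} I first check the construction $\blow{B}{1/6,\ldots,1/6}$. A homomorphism $f$ of $F_{24}$ into $C_6$ must preserve both edges and non-edges. Vertex $0$ has three neighbours $2,3,4$, which are pairwise non-adjacent. In $C_6$ every vertex $i$ has only the two neighbours $i\pm1$, so these three vertices occupy at most two parts, each an independent set; that is allowed. I then enumerate the assignments:
\begin{itemize}
\item put $0\mapsto 0$ and $\{2,3,4\}\subseteq\{1,5\}$;
\item vertex $2$ is adjacent to $1$, and $1$ is adjacent to $5$;
\item vertex $1$ must be non-adjacent to $0$ and to $3,4$, and vertex $5$ must be non-adjacent to everything except $1$.
\end{itemize}
Counting injective placements into parts of size $n/6$ gives $t(F_{24},\cdot)$. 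Converting via~\eqref{eq:t} should yield $5/54$. This is a finite computation of the polynomial $p(F_{24},\blow{B}{\V x})$, which I would also expand symbolically, since it is needed for item~\ref{it:PST7.11}.

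\textbf{Certificate.} The package supplies a flag algebra certificate~\eqref{eq:FAMain} with $N=6$ and $u=5/54$. I would verify it in exact rational arithmetic: positive semidefiniteness of each $X^\tau$ and nonnegativity of each slack $c_H$. I would also check item~\ref{it:PST7.13}: every $6$-vertex graph $H$ with $c_H=0$ maps homomorphically to $C_6$. This is a finite check over $\C F_6^0$.

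\textbf{Conditions on $\tau$.} I take $\tau=([4],\{01,02,13\})$, the path $2\,0\,1\,3$, i.e.\ $P_4$. Since $F_{24}$ contains an induced $P_4$ (for example $3\,0\,2\,1$), forbidding $\tau$ kills $F_{24}$. Hence~\eqref{eq:PST7.1a} holds trivially with limit $0$.

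For \ref{it:PST7.12b}: an induced $P_4$ in a blowup of a triangle-free loopless pattern must use four distinct consecutive vertices of $C_6$. Two path vertices mapped to the same part would be twins, so mapping a vertex to the same part as another vertex at distance $2$ fails the non-edge or edge pattern. The image is therefore a $4$-arc, which is unique up to the dihedral automorphisms.

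For \ref{it:PST7.12c}: take the arc $0,1,2,3$. Inside the image, the six vertices of $C_6$ have neighbourhood traces $\{1\},\{0,2\},\{1,3\},\{2\},\{3\},\{0\}$. These are pairwise distinct.

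\textbf{Main obstacle.} The remaining condition is that $X^\tau$ has co-rank exactly $1$. This is a property of the numerical certificate, so the real work is rounding the SDP output to an exact certificate while preserving that kernel dimension. With \ref{it:PST7.1i}, \Cref{th:PST7.1} then also gives uniqueness of the maximiser $\V x$ up to automorphisms of $B$.
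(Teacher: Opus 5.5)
Your proposal is correct and follows the paper's route: the paper also derives this result from a flag algebra certificate with $N=6$ satisfying Theorem~\ref{th:PST7.1}\,\ref{it:PST7.1i} with the same $\tau=([4],\{01,02,13\})$. The exact checks of the certificate (positive semidefiniteness, slacks, co-rank~$1$ of $X^\tau$, and zero-slack graphs mapping to $C_6$) are delegated to the accompanying script, and your hand verifications of \ref{it:PST7.12a}--\ref{it:PST7.12c} are right. The lower-bound count you left unfinished does close: there are $12$ homomorphisms $F_{24}\to C_6$ and $|\mathrm{aut}(F_{24})|=2$, so $\frac{6!}{2}\cdot\frac{12}{6^6}=\frac{5}{54}$.
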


\begin{theorem}
    \label{thm:graph_27}
    \rationalConditionIPartI{27}{6}{01, 02, 03, 13, 45}{135/1024}{0/1,0/2,0/3,1/3,4/5}{ by flag algebra computations using $7$ vertices}
    \rationalConditionIPartII{8}{01, 12, 23, 30, 45, 56, 67, 74, 00, 11, 22, 33, 44, 55, 66, 77}{$1/8,1/8,1/8,1/8,1/8,1/8,1/8,1/8$}{8}{6}{03,04,12,15}{0/1,1/2,2/3,3/0,4/5,5/6,6/7,7/4,0/0,1/1,2/2,3/3,4/4,5/5,6/6,7/7}{27}
\end{theorem}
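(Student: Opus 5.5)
The plan is to follow the template of the other theorems in this subsection: the lower bound comes from an explicit construction, and the upper bound together with stability and uniqueness comes from computer-generated flag algebra certificates, checked against the hypotheses of \Cref{th:PST7.1}.

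\textbf{Lower bound.} First observe that $F_{27}$ is the disjoint union of a paw (the triangle $013$ with the pendant vertex $2$ attached to $0$) and an edge $45$. The pattern $B$ consists of two vertex-disjoint copies of a looped $4$-cycle. In a blowup of $B$ every part is a clique, consecutive parts within a copy are completely joined, and there are no edges between the two copies.

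Since the paw is connected and contains an induced non-edge, every embedding of $F_{27}$ into $\blow{B}{V_0,\ldots,V_7}$ has one of two shapes:
\begin{itemize}
\item the paw lies inside one looped $C_4$ and uses two non-adjacent parts for the non-adjacent vertices $2$ and $1,3$;
\item the edge $45$ lies inside the other looped $C_4$, either inside one part or across two consecutive parts.
\end{itemize}
I would enumerate the homomorphisms of $F_{27}$ to $B$ by hand, or more safely by computer. Substituting $x_i=1/8$ into the resulting polynomial and applying~\eqref{eq:t} with $|\mathrm{aut}(F_{27})|=4$ should give $p(F_{27},\blow{B}{\V x})=135/1024$.

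\textbf{Upper bound.} Next I would run \texttt{FlagAlgebraToolbox} on $7$-vertex $0$-flags to obtain a certificate~\eqref{eq:FAMain} with $u=135/1024$. This requires rounding the floating-point SDP solution to exact rational positive semidefinite matrices $X^\tau$ and nonnegative slacks $c_H$. Rounding is usually done by projecting onto the orthogonal complement of the vectors $\V v^{\tau,s}$ evaluated on the conjectured extremal blowup, so that the identity holds exactly.

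\textbf{Perfect stability.} For this I would redo the computation with $\N=8$, since $7$ vertices apparently do not yield a certificate with the extra properties needed. I would then verify the hypotheses of \Cref{th:PST7.1} for $\tau=(6,\{03,04,12,15\})$ in the following order.
\begin{enumerate}
\item Condition~\ref{it:PST7.13} is a finite check: every $8$-vertex graph with zero slack must map homomorphically to $B$.
\item For Condition~\ref{it:PST7.12b}, note that $\tau$ is a perfect matching $12,04$ together with the edges $03,15$, that is, the paths $4\!-\!0\!-\!3$ and $2\!-\!1\!-\!5$. One checks by enumeration that, up to automorphisms of $B$, it has a unique homomorphism to $B$. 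I expect its six vertices to land on six of the eight parts, three in each $C_4$, with the non-edges forcing distinct, non-looped-together placements.
\item Condition~\ref{it:PST7.12c} requires that each of the eight parts has a distinct trace on the image of $\tau$; this is also checked by enumeration.
\item Condition~\ref{it:PST7.12a} is the least automatic, because $\tau$ is not a subgraph of $F_{27}$. Here I would run a separate flag algebra computation for $\tau$-free graphs, adding $\tau$ as a forbidden induced subgraph, and aim to certify a bound strictly below $135/1024$.
\item Finally, extract $X^\tau$ from the $\N=8$ certificate and verify exactly that it has co-rank $1$, which is Item~\ref{it:PST7.1i}.
\end{enumerate}
Item~\ref{it:PST7.1i} then also delivers uniqueness of the maximiser $\V x=(1/8,\ldots,1/8)$, with no separate polynomial optimisation needed.

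\textbf{Main obstacle.} I expect the hardest part to be the $\N=8$ certificate. The SDP is very large, and it must simultaneously be exactly roundable, have $X^\tau$ of co-rank exactly $1$, and leave zero slack only on $B$-colourable graphs. If the co-rank comes out larger, my first attempt would be to force strict positivity on the complement of the expected kernel vector. That vector is $\V v^{\tau,s}$ computed in the balanced blowup of $B$, and I would enforce positivity by adding a small multiple of the corresponding projection to the objective before re-solving.
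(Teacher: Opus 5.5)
Your proposal is correct and takes essentially the same approach as the paper. The sharp bound comes from a computer-verified $7$-vertex certificate, and perfect stability together with uniqueness of the balanced vector comes from an $N=8$ certificate checked against \Cref{th:PST7.1} with $\tau=([6],\{03,04,12,15\})$ and Item~\ref{it:PST7.1i}; you are also right that Condition~\ref{it:PST7.12a} is not automatic here, since $\tau$ is not an induced subgraph of $F_{27}$. Your hand checks hold up: there are $2\cdot 8\cdot 12=192$ homomorphisms of $F_{27}$ to $B$, giving $180\cdot 192/8^6=135/1024$, and $\tau$ has a unique homomorphism to $B$ with all eight traces distinct. One small correction is that $\{12,04\}$ is not a perfect matching; $\tau$ is simply the two disjoint cherries $3\,0\,4$ and $2\,1\,5$, as you go on to say.
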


\begin{theorem}
    \label{thm:graph_28}
    \rationalConditionIPartI{28}{6}{02, 03, 14, 15, 45}{15/64}{0/2,0/3,1/4,1/5,4/5}{}
    \rationalConditionIPartII{3}{00, 12}{$1/2,1/4,1/4$}{6}{4}{01}{0/0,1/2}{28}
\end{theorem}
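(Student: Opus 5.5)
The plan is to obtain everything from \Cref{th:PST7.1} with Item~\ref{it:PST7.1i}, so that the uniqueness of the maximiser also comes out of the theorem. First I compute $p(F_{28},\blow{B}{\V x})$ to confirm the lower bound and the value of the target $u$. The graph $F:=F_{28}$ is the disjoint union of the cherry $2$-$0$-$3$ and the triangle $145$, so $|\mathrm{aut}(F)|=2\cdot 6=12$.

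In a blowup $\blow{B}{V_0,V_1,V_2}$ with $B=([3],\{00,12\})$, the parts $V_1\cup V_2$ span a triangle-free graph, so any induced triangle lies inside the clique $V_0$. The clique $V_0$ contains no induced cherry, and there are no edges between $V_0$ and $V_1\cup V_2$. Hence in every embedding the triangle is mapped into $V_0$ and the cherry into $V_1\cup V_2$, with its centre in one of $V_1,V_2$ and both leaves in the other. Conversely, every such injective map is an embedding. By~\eqref{eq:t} this gives
\[
p(F,\blow{B}{\V y})=60\,y_0^3\,y_1y_2(y_1+y_2).
\]
At $\V y=(1/2,1/4,1/4)$ this equals $60\cdot\frac18\cdot\frac1{16}\cdot\frac12=\frac{15}{64}$, matching the claimed value of $\lambda_{F}$. (As a sanity check, this polynomial is clearly maximised at $y_1=y_2$ and then at $y_0=1/2$ by \Cref{pr:F2}, although Item~\ref{it:PST7.1i} will give uniqueness anyway.)

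Next I use \texttt{FlagAlgebraToolbox} with $\N=6$ to produce a certificate~\eqref{eq:FAMain} for $u=15/64$. It has to be rounded to exact rational positive semi-definite matrices $X^\tau$ and non-negative slacks $c_H$. In the rounding I force $c_H=0$ for every $6$-vertex graph $H$ admitting a homomorphism to $B$, and I also force the kernel of $X^\tau$ for $\tau=([4],\{01\})$ to be exactly one-dimensional. With this choice, Item~\ref{it:PST7.13} reduces to a finite computer check that every $H\in\C F_6^0$ with $c_H=0$ maps homomorphically to $B$.

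Then I verify Item~\ref{it:PST7.12} for $\tau$, an edge plus two isolated vertices. Condition~(a) is trivial because $\tau$ is an induced subgraph of $F$ (for example on the vertices $4,5,2,3$), so $\tau$-free graphs have $p(F,G)=0$. For condition~(b), suppose the edge $01$ of $\tau$ went to $\{1,2\}$. Then vertices $2,3$ of $\tau$ would have to avoid both $1$ and $2$, so they would land in the loop $0$ and become adjacent, which is a contradiction. So the edge goes into the loop vertex $0$. Vertices $2,3$ of $\tau$ must then go to non-neighbours of $0$ that are non-adjacent to each other, forcing both into the same vertex of $\{1,2\}$. This gives a unique homomorphism up to the automorphism of $B$ swapping $1$ and $2$. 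For condition~(c), the image of $\tau$ is $\{0,1\}$, and the traces $\Gamma_B(0)\cap\{0,1\}=\{0\}$, $\Gamma_B(1)\cap\{0,1\}=\emptyset$, $\Gamma_B(2)\cap\{0,1\}=\{1\}$ are distinct.

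I expect the main obstacle to be producing the exact certificate: the SDP solution has to be rounded so that it stays feasible and tight at $15/64$. At the same time the prescribed zero slacks and the co-rank-$1$ condition on $X^\tau$ must be preserved. I would try first to project onto the kernel structure dictated by the extremal construction, that is, to make the vector of $\tau$-flag densities of $\blow{B}{\V x}$ span the kernel of $X^\tau$, before rounding. Once the certificate passes these checks, \Cref{th:PST7.1} gives perfect $B$-stability and the uniqueness of $\V x$ up to automorphisms of $B$.
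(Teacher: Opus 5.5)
Your proposal is correct and is essentially the paper's argument: an exact $N=6$ flag algebra certificate fed into Theorem~\ref{th:PST7.1} via Item~\ref{it:PST7.1i} with $\tau=([4],\{01\})$, so that perfect $B$-stability and the uniqueness of $\V x=(1/2,1/4,1/4)$ both follow from the co-rank-$1$ condition on $X^\tau$. Your hand computation $p(F,\blow{B}{\V y})=60\,y_0^3y_1y_2(y_1+y_2)$ and your checks on $\tau$ are all correct: $\tau$ is an induced subgraph of $F$, its homomorphism to $B$ is unique up to swapping $1$ and $2$, and the traces $\{0\},\emptyset,\{1\}$ are distinct. These simply make explicit what the paper delegates to its verification script.
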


\begin{theorem}
    \label{thm:graph_31}
    \rationalConditionIPartI{31}{6}{02, 03, 12, 13, 45}{10/81}{0/2,0/3,1/2,1/3,4/5}{}
    \rationalConditionIPartII{3}{00, 12}{$1/3,1/3,1/3$}{6}{4}{01}{0/0,1/2}{31}
\end{theorem}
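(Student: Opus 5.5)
The plan is to apply \Cref{th:PST7.1} with Item~\ref{it:PST7.1i}. Once all hypotheses of that theorem are verified, it gives perfect $B$-stability and also the uniqueness of the maximiser $\V x=(1/3,1/3,1/3)$ up to automorphisms of $B$. So no separate optimisation of the blowup polynomial is needed.

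\emph{Lower bound and value.} Here $F:=F_{31}$ is a $4$-cycle $0\,2\,1\,3$ (the complete bipartite graph on $\{0,1\}\cup\{2,3\}$) together with a disjoint edge $45$. Let $G=\blow{B}{V_0,V_1,V_2}$, where $V_0$ is a clique with no edges to $V_1\cup V_2$, and $V_1,V_2$ span a complete bipartite graph. Consider an embedding of $F$ into $G$.
\begin{itemize}
\item No vertex of the $C_4$ can lie in $V_0$: such a vertex would need two nonadjacent neighbours, which it cannot have inside the clique $V_0$, and it has no neighbours outside $V_0$.
\item Hence the $C_4$ sits across $V_1,V_2$, with $\{0,1\}$ in one part and $\{2,3\}$ in the other.
\item The edge $45$ must avoid all $C_4$ vertices. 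An edge across $V_1,V_2$ would touch the $C_4$, so $4,5\in V_0$.
\end{itemize}
Counting these embeddings gives $t(F,\blow{B}{\V y})=2y_0^2y_1^2y_2^2$. We have $|\mathrm{aut}(F)|=16$, so by~\eqref{eq:t},
\[
p(F,\blow{B}{\V y})=90\,y_0^2y_1^2y_2^2 .
\]
At $\V y=(1/3,1/3,1/3)$ this equals $90/729=10/81$, which is the matching lower bound.

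\emph{Checking the conditions on $\tau=([4],\{01\})$.} This $\tau$ is an edge plus two isolated vertices, so it is an induced subgraph of $F$ and Condition~\ref{it:PST7.12a} is trivial.
\begin{itemize}
\item For Condition~\ref{it:PST7.12b}, consider where the edge $01$ can map. If it maps across $V_1,V_2$, then vertices $2,3$ must avoid both of its ends, forcing $2,3\in V_0$. But then $2,3$ are adjacent, which is impossible. Hence the edge $01$ maps into $V_0$. Then $2,3$ must be nonadjacent to $V_0$-vertices and to each other, so both lie in $V_1$ or both in $V_2$. Up to the automorphism swapping $1$ and $2$ in $B$, the homomorphism is unique.
\item For Condition~\ref{it:PST7.12c}, the traces of the neighbourhoods on the image are as follows: vertex $0$ of $B$ sees the image of $\{0,1\}$; the part containing the images of $2,3$ sees nothing; the other bipartite part sees the images of $2,3$. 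These three traces are distinct.
\end{itemize}

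\emph{Computational part and main obstacle.} The remaining steps are all computer-verified:
\begin{itemize}
\item produce a flag algebra certificate~\eqref{eq:FAMain} with $\N=6$ and $u=10/81$;
\item check Item~\ref{it:PST7.13}, that every $6$-vertex graph with zero slack $c_H$ admits a homomorphism to $B$ (a finite check);
\item check that the matrix $X^\tau$ for this $\tau$ has co-rank exactly $1$.
\end{itemize}
I expect the main obstacle to be the first step: rounding the floating-point SDP solution to an exact rational certificate. The rounding must keep all $X^\tau$ positive semidefinite and all slacks nonnegative, while preserving the zero pattern of the slacks and the one-dimensional kernel of $X^\tau$. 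The approach is the standard one: project onto the forced kernel vectors coming from the extremal construction, then round the remaining entries. This is carried out with the \texttt{FlagAlgebraToolbox} package, and the resulting certificate is verified in exact arithmetic.
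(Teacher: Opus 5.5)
Your proposal is correct and follows the same route as the paper: apply \Cref{th:PST7.1} via Item~\ref{it:PST7.1i} with $\N=6$ and $\tau=([4],\{01\})$, using a computer-verified certificate that has co-rank $1$ for $X^\tau$ and whose zero-slack graphs admit homomorphisms to $B$. Your hand checks are also accurate and consistent with what the paper's script verifies: the blowup density $90\,y_0^2y_1^2y_2^2$, the uniqueness of the homomorphism $\tau\to B$, and the distinct neighbourhood traces $\{0\}$, $\emptyset$, $\{1\}$.
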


\begin{theorem}
    \label{thm:graph_32}
    \irationalConditionIIPartI{32}{6}{01, 02, 03, 04, 12, 13, 14, 23, 24, 34}{$\frac{20}{3}\alpha_0^{2} - \frac{20}{3}\alpha_0 + \frac{4}{3}$}{0.1677}{$\alpha^{4} - 2\alpha^{3} + \frac{7}{3}\alpha^{2} - \frac{4}{3}\alpha + \frac{1}{6}$}{0/1,0/2,0/3,0/4,1/2,1/3,1/4,2/3,2/4,3/4}
    \irationalConditionIIPartII{2}{00, 11}{$\alpha_0,- \alpha_0 + 1$}{7}{2}{}{32}
\end{theorem}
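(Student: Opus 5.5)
The plan is to verify the hypotheses of Theorem~\ref{th:PST7.1} with $B=(2,\{00,11\})$, $\tau=([2],\{\})$ (two non-adjacent vertices) and $\V a=(\alpha_0,1-\alpha_0)$. I then prove separately that $\V a$ is the unique maximiser up to the automorphism of $B$, since Item~\ref{it:PST7.1ii} (unlike Item~\ref{it:PST7.1i}) does not give uniqueness.

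\textbf{Computing the density in blowups.} Here $F:=F_{32}$ is $K_5\sqcup K_1$, and blowups of $B$ are disjoint unions of two cliques. Let $f$ be an embedding of $F$ into $\blow{B}{V_0,V_1}$. The vertices of the $K_5$ are pairwise adjacent, so they lie in a single part $V_i$. The isolated vertex is non-adjacent to them, so it lies in $V_{1-i}$. Conversely, every such injective placement is an embedding. Since $|\mathrm{aut}(F)|=5!$, equation~\eqref{eq:t} gives
\[
p(F,\blow{B}{(x,1-x)})=\frac{6!}{5!}\left(x^5(1-x)+x(1-x)^5\right)=6\,p(x),
\]
where $p$ is the polynomial of Proposition~\ref{pr:ClaimF3}. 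By that proposition, the maximisers are exactly $x\in\{y_0,1-y_0\}$, and these are swapped by the unique non-trivial automorphism of $B$. This gives the stated unique maximiser $\V x=(\alpha_0,1-\alpha_0)$ with $\alpha_0=y_0=0.1677\ldots$. The remark after Proposition~\ref{pr:ClaimF3} identifies the minimal polynomial of $y_0$ as the stated quartic. The value $6p(\alpha_0)$ is rewritten as $\frac{20}{3}\alpha_0^2-\frac{20}{3}\alpha_0+\frac43$ by reducing the sextic modulo this quartic, which is a routine division with remainder.

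\textbf{Combinatorial conditions of Theorem~\ref{th:PST7.1}.}
\begin{itemize}
\item Condition~\ref{it:PST7.12a}: a graph with no induced copy of $\tau$ (a non-edge) is complete, so its $F$-density is $0<\lambda_F$.
\item Condition~\ref{it:PST7.12b}: two non-adjacent vertices cannot both be mapped to the same looped vertex, so they go to distinct vertices $0$ and $1$. This map is unique up to swapping.
\item Condition~\ref{it:PST7.12c}: $\Gamma_B(0)\cap\{0,1\}=\{0\}$ and $\Gamma_B(1)\cap\{0,1\}=\{1\}$, which are distinct.
\item Item~\ref{it:PST7.1ii}: all vertices of $B$ carry loops, and $B^\circ$ is exactly the two-vertex edgeless graph $\tau$, so the required strict inequality is the same as Condition~\ref{it:PST7.12a}.
\end{itemize}

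\textbf{Computer-checked parts and the expected difficulty.} Items~\ref{it:PST7.11} and~\ref{it:PST7.13} are computer-verified. They consist of the $\N=7$ flag algebra certificate proving $\lambda_F\le 6p(\alpha_0)$, together with the check that every $7$-vertex graph with zero slack is a disjoint union of at most two cliques. The main obstacle is Item~\ref{it:PST7.11}, because the bound is irrational. The certificate must work exactly over $\mathbb{Q}(\alpha_0)$, or at least be rounded in a way that keeps it sharp. I would first try an exact rounding of the numerical SDP solution into the field $\mathbb{Q}(\alpha_0)$, using the known kernel vectors forced by the extremal construction. The lower bound $\lambda_F\ge 6p(\alpha_0)$ itself is immediate from the blowups with part ratios $\V a$.
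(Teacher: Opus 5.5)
Your proposal is correct and follows the paper's argument. The paper's written proof consists only of the uniqueness step, done exactly as you do it: compute $p(F,B(x,1-x))=\frac{6!}{5!}\bigl(x^5(1-x)+x(1-x)^5\bigr)$ and apply Proposition~\ref{pr:ClaimF3}, leaving the certificate conditions to the script. Your explicit checks of Conditions~\ref{it:PST7.12a}--\ref{it:PST7.12c} and Item~\ref{it:PST7.1ii}, and the reduction of $6p(\alpha_0)$ modulo the quartic to $\frac{20}{3}\alpha_0^2-\frac{20}{3}\alpha_0+\frac43$, are all correct and simply make explicit what the paper leaves implicit.
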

\begin{proof}
    We have to show only that $\V x$ is the unique maximiser. Notice that $F \coloneqq F_{32}$ is the clique on five vertices with an isolated vertex. Also, every embedding of $F$ into a blowup of $B$ maps the five vertices in the clique of $F$ to the same part and the isolated vertex to the other part. Thus, by~\eqref{eq:t}, we have that $
    p(F,\blow{B}{x, 1 - x}) = \frac{6!}{5!}\, \left(x^5 (1 - x) + x (1 - x)^5\right)$ for $x\in [0,1]$.
 By~\cref{pr:ClaimF3}, it is maximised if and only if $x\in\{y_0,1-y_0\}$, where $y_0$ is defined by~\eqref{eq:y0}.
It is routine to see that $y_0=\alpha_0$, finishing the proof of the uniqueness part of the theorem.
\end{proof}

\begin{theorem}
    \label{thm:graph_36}
    \irationalConditionIIPartI{36}{6}{02, 03, 05, 12, 13, 15}{$\frac{25}{6}\alpha_0^{2} - \frac{25}{6}\alpha_0 + \frac{5}{6}$}{0.1677}{$\alpha^{4} - 2\alpha^{3} + \frac{7}{3}\alpha^{2} - \frac{4}{3}\alpha + \frac{1}{6}$}{0/2,0/3,0/5,1/2,1/3,1/5}
    \irationalConditionIIPartII{4}{01, 23}{$\frac{1}{2}\alpha_0,\frac{1}{2}\alpha_0,- \frac{1}{2}\alpha_0 + \frac{1}{2},- \frac{1}{2}\alpha_0 + \frac{1}{2}$}{7}{3}{01}{36}
\end{theorem}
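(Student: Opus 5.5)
Since the statement is obtained by satisfying Theorem~\ref{th:PST7.1}\,\ref{it:PST7.1ii}, which gives perfect $B$-stability without any uniqueness conclusion, the only thing left to prove is that $\V x$ is the unique maximiser of $p(F,\blow{B}{\V y})$ up to automorphisms of $B$. The plan mirrors the proof of \Cref{thm:graph_32}: write the density polynomial explicitly, remove all but one degree of freedom by a symmetrisation inequality, and then invoke \Cref{pr:ClaimF3}.

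First I describe the embeddings. Write $F:=F_{36}$. This graph is $T_{2,3}$, with parts $\{0,1\}$ and $\{2,3,5\}$, together with the isolated vertex $4$. The blowup $\blow{B}{V_0,V_1,V_2,V_3}$ is the disjoint union of the complete bipartite graphs between $V_0,V_1$ and between $V_2,V_3$. Consider an embedding $f$. The connected part $T_{2,3}$ must land in one component, say $V_0\cup V_1$, with its two sides in different parts; since $T_{2,3}$ is connected and bipartite, both orientations are possible. The vertex $4$ cannot lie in $V_0\cup V_1$, because it would then be adjacent to the image of one side. Hence $f(4)\in V_2\cup V_3$. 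Conversely, every injective map of this form is an embedding, and the same holds with the two components swapped. Since $|\mathrm{aut}(F)|=2!\,3!=12$, \eqref{eq:t} gives, for $\V y\in\I S_4$,
\[
p(F,\blow{B}{\V y})=60\left((y_0y_1)^2(y_0+y_1)(y_2+y_3)+(y_2y_3)^2(y_2+y_3)(y_0+y_1)\right).
\]

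Next I reduce to one variable. Put $s:=y_0+y_1$, so that $y_2+y_3=1-s$. By AM--GM, $(y_0y_1)^2\le s^4/16$ with equality if and only if $y_0=y_1$, and similarly $(y_2y_3)^2\le (1-s)^4/16$ with equality if and only if $y_2=y_3$. Therefore
\[
p(F,\blow{B}{\V y})\le \tfrac{15}{4}\left(s^5(1-s)+s(1-s)^5\right).
\]
Whenever $0<s<1$, equality forces $y_0=y_1=s/2$ and $y_2=y_3=(1-s)/2$. At $s\in\{0,1\}$ the right-hand side vanishes, so these values are irrelevant. By \Cref{pr:ClaimF3}, the one-variable polynomial is maximised exactly at $s\in\{y_0,1-y_0\}$. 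These two choices are interchanged by the automorphism of $B$ that swaps the edges $01$ and $23$. Thus the unique maximiser up to automorphism is $(\tfrac{\alpha_0}{2},\tfrac{\alpha_0}{2},\tfrac{1-\alpha_0}{2},\tfrac{1-\alpha_0}{2})$, where $\alpha_0=y_0$. This identification follows from the remark after \Cref{pr:ClaimF3} that $y_0$ is a root of the stated quartic, and $0.1677\ldots$ matches.

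Finally, as a consistency check, I reduce $\tfrac{15}{4}\left(\alpha_0^5(1-\alpha_0)+\alpha_0(1-\alpha_0)^5\right)$ modulo the quartic minimal polynomial of $\alpha_0$. This should give $\tfrac{25}{6}\alpha_0^2-\tfrac{25}{6}\alpha_0+\tfrac56$, which is a routine computation. There is no real obstacle here; the only point needing care is the verification that every embedding places vertex $4$ in the opposite component, which is what makes the polynomial factor as above.
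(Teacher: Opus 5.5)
Your proposal is correct and follows essentially the same route as the paper: the same embedding analysis gives $60(y_0+y_1)(y_2+y_3)(y_0^2y_1^2+y_2^2y_3^2)$, and the same symmetrisation forces $y_0=y_1$ and $y_2=y_3$ (you use AM--GM where the paper cites \Cref{pr:F2}). \Cref{pr:ClaimF3} then finishes in both versions, and your explicit treatment of the equality case and of $s\in\{0,1\}$ only makes the paper's argument slightly more careful.
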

\begin{proof}
    We need to prove only that $\V x$ is the unique maximiser. Notice that $F \coloneqq F_{36}$ is the complete bipartite graph $T_{2, 3}$ with an isolated vertex. Also, every embedding of $F$ into $\blow{B}{V_0,V_1,V_2,V_3}$ maps all five vertices of $K_{2, 3}$ to $U_i$ for some $i\in\{0,1\}$ and the isolated vertex to $U_{1-i}$, where we let $U_0 := V_0 \cup V_1$ and $U_1 := V_2 \cup V_3$. Furthermore, an injection embeds $T_{2, 3}$ into $U_i=V_j\cup V_h$ if and only if it preserves the bipartition. Hence, by~\eqref{eq:t}, we have \begin{eqnarray*}
    p(F,\blow{B}{\V y}) &=& \frac{6!}{3!\cdot 2!}\, \paren{\paren{y_0^2 y_1^3 + y_0^3 y_1^2} \paren{y_2 + y_3} +  \paren{y_2^2 y_3^3 + y_2^3 y_3^2} \paren{y_0 + y_1}}\\
    &=& 60 \paren{y_0 + y_1} \paren{y_2 + y_3} \paren{y_0^2 y_1^2 + y_2^2 y_3^2},\qquad \mbox{for $\V y\in \I S_4$}.
    \end{eqnarray*}
Take any optimal $\V y$. By \Cref{pr:F2} when we fix $y_0+y_1$ (and thus also $y_2+y_3$), we have that $y_0=y_1$ and $y_2=y_3$. Thus we have to maximise
\[
p(F,\blow{B}{y / 2, y / 2, (1 - y) / 2, (1 - y) / 2}) = \frac{15}{4} \paren{y^5(1-y) + y(1 - y)^5}.
\] 
 for $y\in [0,1]$.
    By~\cref{pr:ClaimF3}, the maximisers are exactly when $y\in\{y_0,1-y_0\}$, where $y_0$ is as in~\eqref{eq:y0}, giving the desired.
\end{proof}

\begin{theorem}
    \label{thm:graph_37}
    \rationalConditionIPartI{37}{6}{23, 24, 25, 34, 35, 45}{80/243}{2/3,2/4,2/5,3/4,3/5,4/5}{}
    \rationalConditionIPartII{2}{00}{$2/3,1/3$}{6}{2}{01}{0/0}{37}
\end{theorem}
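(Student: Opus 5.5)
The statement is an instance of the Item~\ref{it:PST7.1i} template. Theorem~\ref{th:PST7.1} then gives perfect stability and uniqueness of the maximiser in one go, so the plan is to verify its hypotheses.

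Here $F_{37}=(6,\{23,24,25,34,35,45\})$ is $K_4$ plus two isolated vertices, $B=(2,\{00\})$ and $\V a=(2/3,1/3)$.

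\textbf{Lower bound.} First I would compute $p(F_{37},\blow{B}{x,1-x})$. In a blowup of $B$, part $V_0$ is a clique and $V_1$ is independent, with no edges between them. An induced copy of $F_{37}$ must therefore place the $K_4$ inside $V_0$. The two isolated vertices are non-adjacent to the clique and to each other, so they must lie in $V_1$: a vertex of $V_0$ would be adjacent to the clique. By~\eqref{eq:t},
\[
p(F_{37},\blow{B}{x,1-x})=\tbinom{6}{2}x^4(1-x)^2=15x^4(1-x)^2 .
\]
By \Cref{pr:F2}, this is uniquely maximised at $x=2/3$, with value $15\cdot\frac{16}{81}\cdot\frac19=\frac{80}{243}$. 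This gives the lower bound and already confirms the stated maximiser.

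\textbf{Upper bound, Item~\ref{it:PST7.11}.} This is a flag algebra certificate with $\N=6$ proving $\lambda_{F_{37}}\le 80/243$. I would obtain it by solving the SDP with \texttt{FlagAlgebraToolbox} and rounding to an exact rational certificate. In rounding, the entries are forced to vanish on the graphs that appear with positive density in the construction, and the kernel of each $X^\tau$ is fixed to contain the limiting flag-density vectors of the extremal blowup. I expect this rounding step to be the main obstacle: with $\N=6$ the certificate must be tight, and exact rounding with the correct kernels is delicate. The first thing to try is to project onto the orthogonal complement of the known construction vectors before rounding.

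\textbf{Remaining hypotheses, with $\tau=([2],\{01\})$ (an edge).}
\begin{itemize}
\item Item~\ref{it:PST7.12a} holds trivially, since $\tau$ is an induced subgraph of $F_{37}$: an edge-free graph has zero $F_{37}$-density.
\item Item~\ref{it:PST7.12b} holds because the only way to map an edge into $B$ is to send both endpoints to the loop $0$.
\item Item~\ref{it:PST7.12c} holds because, relative to $\{0\}$, vertex $0$ has neighbourhood $\{0\}$ while vertex $1$ has neighbourhood $\emptyset$.
\item Item~\ref{it:PST7.13} requires every $6$-vertex graph with zero slack to be a blowup of $B$, i.e.\ a clique plus isolated vertices. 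This is checked directly from the certificate.
\item Item~\ref{it:PST7.1i} requires the matrix $X^\tau$ for the edge type to have co-rank exactly $1$. This is again checked from the certificate.
\end{itemize}
All of these checks are finite computations done by the verification script. Once they pass, Theorem~\ref{th:PST7.1} yields perfect $B$-stability and the uniqueness of $\V a=(2/3,1/3)$.
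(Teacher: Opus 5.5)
Your proposal is correct and follows the paper's own route. The paper likewise obtains this theorem from Theorem~\ref{th:PST7.1} via Item~\ref{it:PST7.1i}, with a computer-verified $\N=6$ certificate, $\tau=([2],\{01\})$, and the co-rank and zero-slack conditions checked by the accompanying script. Your direct computation $p(F_{37},\blow{B}{x,1-x})=15x^4(1-x)^2$ correctly gives the lower bound $80/243$ and the maximiser $(2/3,1/3)$; the uniqueness part is redundant, since Item~\ref{it:PST7.1i} already yields it.
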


\begin{theorem}
    \label{thm:graph_38}
    \rationalConditionIPartI{38}{6}{01, 02, 03, 04, 15, 45}{72/625}{0/1,0/2,0/3,0/4,1/5,4/5}{}
    \rationalConditionIPartII{5}{01, 12, 23, 34, 40}{$1/5,1/5,1/5,1/5,1/5$}{6}{4}{01,02,13}{0/1,1/2,2/3,3/4,4/0}{38}
\end{theorem}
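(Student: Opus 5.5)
The plan is to apply \Cref{th:PST7.1} with Item~\ref{it:PST7.1i}, since that single route gives the sharp bound, perfect $B$-stability and uniqueness of $\V x$ together. Here $B=(5,\{01,12,23,34,40\})$ is the loopless $5$-cycle, $\V a=(1/5,\ldots,1/5)$, $\N=6$, and $\tau=([4],\{01,02,13\})\cong P_4$.

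\emph{Lower bound.} I first compute $p(F_{38},\blow{B}{\V a})$ by classifying embeddings. Write $F:=F_{38}$; it consists of the $4$-cycle $0\,1\,5\,4$ with two pendant vertices $2,3$ attached at $0$. Suppose vertex $0$ goes to part $V_i$. Then $1,4$ must go to neighbours of $i$ in $B$. Vertex $5$ must be adjacent to both $1$ and $4$ but not to $0$.
\begin{itemize}
\item If $1$ and $4$ went to $i+1$ and $i-1$, then $5$ would have to lie in $V_i$. The pendant vertices $2,3$ also lie in $V_{i\pm1}$, so each would be adjacent to $5$, which is impossible.
\item Hence $1,4$ lie in a common part, say $V_{i+1}$. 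Then $2,3$ must avoid the neighbourhood of $i+1$, forcing them into $V_{i-1}$. Non-adjacency of $5$ to $2,3$ then forces $5\in V_i$.
\end{itemize}
So each embedding sends $\{0,5\}$ to $V_i$, $\{1,4\}$ to $V_{i\pm1}$ and $\{2,3\}$ to $V_{i\mp1}$. Conversely, every such injective map is an embedding. Counting these placements with~\eqref{eq:t} should give $10\cdot \frac{6!}{2!\,2!\,2!}\cdot 5^{-6}=72/625$, matching the claimed value.

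\emph{Upper bound and conditions.} Next I produce, with \texttt{FlagAlgebraToolbox}, a certificate~\eqref{eq:FAMain} with $\N=6$ and $u=72/625$, rationalised so that it is exact. I then verify the conditions of \Cref{th:PST7.1} in turn.
\begin{itemize}
\item Condition~\ref{it:PST7.12a} is immediate: $\tau=P_4$ is an induced subgraph of $F$ (for instance on $2,0,1,5$), so $\tau$-free graphs have $p(F,G)=0$.
\item For Condition~\ref{it:PST7.12b}, I check by hand that every homomorphism of $P_4$ into the loopless $C_5$ is injective and maps onto a path of $B$. Non-edges must be preserved and $C_5$ has no loops, so this holds, and all such maps are equivalent under the dihedral group.
\item For Condition~\ref{it:PST7.12c}, the four image vertices $i,i+1,i+2,i+3$ give the five vertices of $B$ pairwise distinct traces, so this holds too.
\item Condition~\ref{it:PST7.13} is a finite computer check: every $6$-vertex graph $H$ with zero slack $c_H$ must admit a homomorphism to $C_5$.
\end{itemize}

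\emph{Main obstacle.} The real work is Item~\ref{it:PST7.1i}: the matrix $X^\tau$ for $\tau=P_4$ must have co-rank exactly $1$. The rounding of the floating SDP solution to exact rationals must preserve both positive semidefiniteness and this one-dimensional kernel, which is spanned by the flag-density vector of the balanced $C_5$-blowup rooted at a $P_4$. I would first try projecting the numerical solution onto the orthogonal complement of that known kernel vector before rounding. I would then verify the co-rank exactly, via the rank of the rational matrix. Once this holds, \Cref{th:PST7.1} delivers the bound, perfect $B$-stability and uniqueness of $\V x$ up to automorphisms of $B$, with no separate optimisation argument needed.
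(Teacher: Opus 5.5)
Your overall route is the paper's: an exact $N=6$ certificate for $\lambda_{F_{38}}\le 72/625$, fed into Theorem~\ref{th:PST7.1} via Item~(i) with $\tau=([4],\{01,02,13\})\cong P_4$. The slack condition and the co-rank of $X^\tau$ are checked by computer, and your checks of Conditions 2(a) and 2(c) are correct. For 2(b) your conclusion is right, but the stated reason is not sufficient. Since $C_5$ has no loops, non-adjacent vertices of $P_4$ may a priori share a part. You must rule this out explicitly: identifying two vertices at distance $2$ makes one of them adjacent to an endpoint it must avoid, and identifying the two endpoints produces a triangle in $C_5$.

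The step that actually fails is your hand computation of $p(F,\blow{B}{\V a})$, which must equal $72/625$ for Item 1) to apply. Your second bullet contradicts your first. As you noted yourself, $2,3$ lie in $V_{i-1}\cup V_{i+1}$, which is complete to $V_i$, so $5\notin V_i$ in every case. The correct analysis runs as follows.
\begin{itemize}
\item Since $1$ and $4$ cannot lie in different parts, they lie in a common part, say $V_{i+1}$.
\item Then $5$, a neighbour of $1$ outside $V_i$, must lie in $V_{i+2}$.
\item Non-adjacency of $5$ to $2,3$ then forces $2,3\in V_{i-1}$.
\end{itemize}
So, up to automorphisms of $B$, every embedding is $0\mapsto V_i$, $\{1,4\}\mapsto V_{i+1}$, $5\mapsto V_{i+2}$, $\{2,3\}\mapsto V_{i-1}$. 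Conversely, every such injection is an embedding.

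Your claimed converse is false: two vertices in each of $V_{i-1},V_i,V_{i+1}$ induce $T_{2,4}$, not $F$. Your arithmetic is also off, since $10\cdot 6!/(2!\,2!\,2!)\cdot 5^{-6}=900/15625=36/625$. The correct count uses multiplicities $(2,1,2,1)$ on four consecutive parts, giving $10\cdot 6!/(2!\,1!\,2!\,1!)\cdot 5^{-6}=1800/15625=72/625$. Equivalently, $t(F,\blow{B}{\V a})=10\cdot 5^{-6}$ and $|\mathrm{aut}(F)|=4$ in~\eqref{eq:t}. With this repair the proposal matches the paper's argument.
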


\begin{theorem}
    \label{thm:graph_49}
    \rationalConditionIPartI{49}{6}{03, 04, 05, 12, 13, 45}{5/54}{0/3,0/4,0/5,1/2,1/3,4/5}{}
    \rationalConditionIPartII{6}{00, 11, 22, 33, 44, 55, 01, 12, 23, 34, 45, 50}{$1/6,1/6,1/6,1/6,1/6,1/6$}{6}{4}{01,02,13}{0/0,1/1,2/2,3/3,4/4,5/5,0/1,1/2,2/3,3/4,4/5,5/0}{49}
\end{theorem}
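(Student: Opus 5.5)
The plan is to apply \Cref{th:PST7.1} with the stated data. Here $F:=F_{49}$, $B$ is the $6$-cycle $0\,1\,2\,3\,4\,5$ with a loop at every vertex (so blowups of $B$ are cyclic chains of six cliques, consecutive ones completely joined), $\V a=(1/6,\ldots,1/6)$, $\N=6$ and $\tau=([4],\{01,02,13\})$. Since Item~\ref{it:PST7.1i} will be the one verified, the ``moreover'' part of \Cref{th:PST7.1} then gives uniqueness of the maximiser as well, so no separate optimisation argument is needed.

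\emph{Lower bound.} First we compute $p(F,\blow{B}{\V a})=5/54$. Note that $F$ is the triangle $0\,4\,5$ with the induced path $0\,3\,1\,2$ hanging off vertex~$0$. In a blowup of $B$ any induced copy of $F$ has limited freedom:
\begin{itemize}
\item the triangle must sit inside two consecutive parts;
\item the path $3\,1\,2$ must advance around the cycle, since non-edges of $F$ must land on non-adjacent parts or on equal parts only when these are loop-free, and all parts here are cliques.
\end{itemize}
Enumerating the homomorphisms $F\to B$ and summing $6!/|\mathrm{aut}(F)|$ times the product of the $x_i$, evaluated at $x_i=1/6$ via \eqref{eq:t}, should yield $5/54$. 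This is a finite routine count that we would also confirm by computer.

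\emph{Upper bound and Items~\ref{it:PST7.11} and~\ref{it:PST7.13}.} Using \texttt{FlagAlgebraToolbox} with $\N=6$, we produce an exact rational certificate \eqref{eq:FAMain} with $u=5/54$ and verify it in exact arithmetic. We then check the following properties:
\begin{itemize}
\item all $X^\tau$ are positive semidefinite;
\item all slacks $c_H$ are non-negative;
\item every $6$-vertex graph $H$ with $c_H=0$ admits a homomorphism to $B$, which is a finite check over $\C F_6^0$;
\item the matrix assigned to the type $\tau\cong P_4$ has co-rank exactly $1$, which is Item~\ref{it:PST7.1i}.
\end{itemize}

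\emph{Item~\ref{it:PST7.12}.} Condition~\ref{it:PST7.12a} is immediate because $\tau$ is an induced subgraph of $F$: the path $2\,1\,3\,0$ is induced in $F$, so a $\tau$-free graph has $F$-density zero. For \ref{it:PST7.12b}, write the path as $2-0-1-3$. Since all parts of $B$ are looped, non-adjacent vertices of $\tau$ must go to distinct, non-adjacent vertices of $B$, while adjacent vertices of $\tau$ go to equal or consecutive vertices of $B$. Checking the cases shows that the only option is to map the path onto four consecutive vertices $i,i+1,i+2,i+3$ of the cycle. This map is unique up to the dihedral automorphisms of $B$. For \ref{it:PST7.12c}, the traces of the six neighbourhoods on $\{i,\ldots,i+3\}$ are
\[
\{i,i+1\},\ \{i,i+1,i+2\},\ \{i+1,i+2,i+3\},\ \{i+2,i+3\},\ \{i+3\},\ \{i\},
\]
which are pairwise distinct. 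Hence \Cref{th:PST7.1} applies and gives perfect $B$-stability, $\lambda_F=5/54$, and uniqueness of the uniform maximiser.

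\emph{Main obstacle.} The only genuinely nontrivial step is obtaining an exact, rounded certificate that is still tight at $5/54$ and keeps the $\tau$-block at co-rank one. The floating-point SDP solution must be projected onto the correct kernel, namely the flag vector of $\tau$ in $\blow{B}{\V a}$, before rounding. If rounding breaks tightness, the first remedy is to force that kernel vector into the SDP and round the remaining part.
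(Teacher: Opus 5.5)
Your proposal is correct and follows the paper's route. The paper likewise obtains the result directly from Theorem~\ref{th:PST7.1} via Item~\ref{it:PST7.1i}, using a script-verified $\N=6$ certificate, and gets uniqueness of the maximiser from the \emph{moreover} clause. Your hand checks are accurate: Conditions~\ref{it:PST7.12a}--\ref{it:PST7.12c} hold, and $p(F_{49},\blow{B}{\V a})=5/54$, coming from $12$ homomorphisms onto five consecutive parts and $|\mathrm{aut}(F_{49})|=2$.
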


\begin{theorem}
    \label{thm:graph_50}
    \rationalConditionIPartI{50}{6}{02, 03, 05, 12, 13, 45}{5/54}{0/2,0/3,0/5,1/2,1/3,4/5}{}
    \rationalConditionIPartII{6}{01, 12, 23, 34, 45, 50}{$1/6,1/6,1/6,1/6,1/6,1/6$}{6}{4}{01,02,13}{0/1,1/2,2/3,3/4,4/5,5/0}{50}
\end{theorem}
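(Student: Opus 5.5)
The plan is to apply \Cref{th:PST7.1} with Item~\ref{it:PST7.1i}. Because that item is used, the theorem's final clause also gives uniqueness of the maximiser $\V x$, so no separate optimisation argument over $\I S_6$ is needed. The proof splits into a lower-bound calculation, a computer-verified certificate, and a short hand check of the combinatorial conditions on $\tau$ and $B$.

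\emph{Lower bound.} Note that $F:=F_{50}$ is the $4$-cycle $0\,2\,1\,3$ with the pendant path $0\,5\,4$ attached at vertex $0$. I would list all homomorphisms of $F$ into the pattern $B=C_6$ (no loops), i.e.\ all assignments of the six vertices to parts of the blowup that give an induced copy. Two vertices of $F$ may share a part only if they have the same neighbourhood in $F$; here that is only $2,3$ (the two common neighbours of $0$ and $1$) and the pair $0,1$ is excluded since $0\sim5$ but $1\not\sim5$. With all parts of size $1/6$, I would sum the resulting monomials via~\eqref{eq:t}. This should give $p(F,\blow{B}{\V x})=5/54$, hence $\lambda_F\ge 5/54$.

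\emph{Certificate.} Using \texttt{FlagAlgebraToolbox} with $\N=6$, I would produce a certificate~\eqref{eq:FAMain} with $u=5/54$. The steps are: solve the semidefinite program numerically, round to exact rationals, and then verify exactly that every $X^\tau$ is positive semi-definite and every slack $c_H\ge0$. This gives Item~\ref{it:PST7.11}. From the same exact data the computer checks two further facts:
\begin{itemize}
\item Item~\ref{it:PST7.13}: every $H\in\C F^0_6$ with $c_H=0$ admits a homomorphism to $C_6$.
\item Item~\ref{it:PST7.1i}: the matrix for $\tau=(4,\{01,02,13\})$ has co-rank exactly $1$.
\end{itemize}
I expect this to be the main obstacle. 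The rounding must preserve both exact tightness at $5/54$ and the one-dimensional kernel of $X^\tau$. I would first try projecting onto the orthogonal complement of the kernel vectors forced by the extremal construction before rounding.

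\emph{Conditions on $\tau$.} The type $\tau$ is the path $P_4$ with vertex order $2,0,1,3$, i.e.\ edges $20,01,13$.
\begin{itemize}
\item Item~\ref{it:PST7.12a} is immediate, because $2\,0\,5\,4$ is an induced $P_4$ in $F$. Any $\tau$-free graph therefore has $p(F,G)=0$.
\item Item~\ref{it:PST7.12b}: for $P_4=abcd$, two vertices sharing a part must have identical neighbourhoods in $B$. Checking the pairs:
  \begin{itemize}
  \item $a,c$ cannot share a part, since $d\sim c$ but $d\not\sim a$; symmetrically $b,d$ cannot.
  \item $a,d$ cannot share a part $i$: then $b,c\in\{i\pm1\}$ would have to be adjacent, which is false in $C_6$.
  \end{itemize}
  So the homomorphism is injective, and its image must be four consecutive vertices of $C_6$. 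This is unique up to a rotation or reflection of $C_6$.
\item Item~\ref{it:PST7.12c}: take the image to be $\{0,1,2,3\}$. The traces of the neighbourhoods $\Gamma_B(x)$ on this set, for $x=0,\dots,5$, are $\{1\},\{0,2\},\{1,3\},\{2\},\{3\},\{0\}$. These are pairwise distinct.
\end{itemize}

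\Cref{th:PST7.1} then yields perfect $B$-stability and the uniqueness of $\V x=(1/6,\ldots,1/6)$ up to automorphisms of $B$.
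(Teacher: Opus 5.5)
Your proposal is correct and follows the paper's route exactly. The paper also applies \Cref{th:PST7.1} through Item~\ref{it:PST7.1i} with $\N=6$ and $\tau=([4],\{01,02,13\})$; the certificate, the slack condition and the co-rank~$1$ of $X^\tau$ are computer-verified, and the uniqueness of $\V x$ follows from the theorem's final clause. Your hand checks of Items~\ref{it:PST7.12a}--\ref{it:PST7.12c} are correct, as is your lower-bound count: the $12$ homomorphisms $F_{50}\to C_6$ give $360\cdot 12/6^6=5/54$.
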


\begin{theorem}
    \label{thm:graph_53}
    \rationalConditionIPartI{53}{6}{04, 05, 12, 13, 23, 45}{5/16}{0/4,0/5,1/2,1/3,2/3,4/5}{}
    \rationalConditionIPartII{2}{00, 11}{$1/2,1/2$}{6}{2}{}{0/0,1/1}{53}
\end{theorem}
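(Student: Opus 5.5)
The plan is to obtain everything, including the uniqueness of $\V x$, from a single application of \Cref{th:PST7.1} with Item~\ref{it:PST7.1i}. First we check the lower bound. Here $F_{53}$ is the disjoint union of two triangles $\{1,2,3\}$ and $\{0,4,5\}$, that is, $\O{F}_{53}=T_{3,3}$. Every embedding of $F_{53}$ into a blowup $\blow{B}{V_0,V_1}$ of $B=(2,\{00,11\})$ (two disjoint cliques) must send one triangle into $V_0$ and the other into $V_1$. This is because two vertices in the same part are adjacent, while two vertices in different parts are not. Since $|\mathrm{aut}(F_{53})|=72$, identity~\eqref{eq:t} gives
\[
p(F_{53},\blow{B}{x,1-x})=\frac{6!}{72}\cdot 2x^3(1-x)^3=20\,x^3(1-x)^3 .
\]
This equals $5/16$ at $x=1/2$, and by \Cref{pr:F2} the value $x=1/2$ is its unique maximiser on $[0,1]$. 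Hence $\lambda_{F_{53}}\ge 5/16$, and we set $\V a=(1/2,1/2)$, which has all entries positive.

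For the upper bound (Item~\ref{it:PST7.11}) we use \texttt{FlagAlgebraToolbox} to produce a certificate~\eqref{eq:FAMain} with $\N=6$ and $u=5/16$. The SDP solution is rounded to exact rationals so that each $X^\tau$ is exactly positive semi-definite and every slack $c_H$ is non-negative. We take $\tau=([2],\{\})$, two non-adjacent roots, and verify the combinatorial conditions of \Cref{th:PST7.1} by hand:
\begin{enumerate}[(a)]
\item A graph with no induced copy of $\tau$ is complete, so it has $F_{53}$-density $0<5/16$. This gives~\eqref{eq:PST7.1a}.
\item In a homomorphism $\tau\to B$, two non-adjacent vertices cannot share a looped vertex of $B$. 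They must therefore go to distinct vertices of $B$, so the homomorphism is unique up to the swap automorphism of $B$.
\item The neighbourhoods of $0$ and $1$ in $B$, restricted to the image $\{0,1\}$, are $\{0\}$ and $\{1\}$, which are distinct.
\end{enumerate}
For Item~\ref{it:PST7.13}, we read off from the rounded certificate the $6$-vertex graphs $H$ with $c_H=0$. We then check, by computer or directly, that each of them is a disjoint union of at most two cliques, and hence admits a homomorphism to $B$.

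It remains to verify Item~\ref{it:PST7.1i}: the matrix $X^\tau$ for our $\tau$, which is indexed by the $4$-vertex $\tau$-flags since $s=(6+2)/2=4$, must have co-rank exactly $1$. Its kernel should be spanned by the vector of limiting $\tau$-flag densities in the extremal construction, where the two roots lie in different halves. Once this holds, \Cref{th:PST7.1} yields perfect $B$-stability together with the uniqueness of the maximiser $(1/2,1/2)$, which we have also confirmed independently above.

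The main obstacle is the rounding step. The rounded certificate must remain exactly feasible while keeping the prescribed zero pattern: $c_H=0$ precisely for graphs that embed in blowups of $B$, and $X^\tau$ of exact co-rank $1$. The approach is to first project onto the subspace forced by the extremal construction, that is, to impose the kernel vectors of each $X^\tau$ and set the slacks to zero on the blowups of $B$, and only then round. This is the standard procedure of the toolbox. As a fallback for the value and for perfect stability, one could instead cite Brown and Sidorenko~\cite{BrownSidorenko94} and \LPSS{Theorem 1.3}, since $\O{F}_{53}=T_{3,3}$.
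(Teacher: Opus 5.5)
Your proposal is correct and follows the paper's own route. The paper also obtains the result directly from \Cref{th:PST7.1}\,\ref{it:PST7.1i}, using a computer-generated flag algebra certificate with $N=6$ and $\tau=([2],\emptyset)$, whose co-rank and zero-slack conditions are checked by the accompanying script. Your hand checks of conditions (a)--(c) and your direct maximisation of $20x^3(1-x)^3$ are accurate additions; the latter is redundant, since Item~\ref{it:PST7.1i} already yields uniqueness of the maximiser.
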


\begin{theorem}
    \label{thm:graph_55}
    \irationalConditionIIPartI{55}{6}{01, 02, 03, 05, 12, 13, 15}{$\frac{288}{125}\alpha_0^{2} - \frac{288}{125}\alpha_0 + \frac{288}{625}$}{0.1677}{$\alpha^{4} - 2\alpha^{3} + \frac{7}{3}\alpha^{2} - \frac{4}{3}\alpha + \frac{1}{6}$}{0/1,0/2,0/3,0/5,1/2,1/3,1/5}
    \irationalConditionIIPartII{4}{01, 00, 23, 33}{$\frac{2}{5}\alpha_0,\frac{3}{5}\alpha_0,- \frac{3}{5}\alpha_0 + \frac{3}{5},- \frac{2}{5}\alpha_0 + \frac{2}{5}$}{7}{4}{01,23}{55}
\end{theorem}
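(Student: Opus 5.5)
As in the proofs of \Cref{thm:graph_32} and \Cref{thm:graph_36}, the flag algebra part (the upper bound, Items~\ref{it:PST7.11}--\ref{it:PST7.13} of \Cref{th:PST7.1}, and Item~\ref{it:PST7.1ii} with the stated $\N$ and $\tau$) is checked by computer. So it remains to show that the stated $\V x$ is the unique maximiser of $p(F,\blow{B}{\V y})$, up to automorphisms of~$B$, and that its value is the stated expression in $\alpha_0$.

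\textbf{Structure of $F$ and $B$.} Here $F:=F_{55}$ is the complete tripartite graph $T_{1,1,3}$ on $\{0,1\}\cup\{2,3,5\}$ together with the isolated vertex~$4$. The pattern $B$ has two components, $U_0:=V_0\cup V_1$ and $U_1:=V_2\cup V_3$. Each component is a clique part ($V_0$, resp.\ $V_3$) completely joined to an independent part ($V_1$, resp.\ $V_2$).

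\textbf{Locating the embeddings.} The component $T_{1,1,3}$ is connected, so any embedding maps it into a single $U_i$.

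\emph{The isolated vertex lies in the other component.} The edge $01$ forces one of $0,1$ into the clique part of $U_i$. Every vertex of $U_i$ is adjacent to that vertex, so vertex $4$ cannot be placed in $U_i$ and must go to $U_{1-i}$.

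\emph{The placement inside $U_i$ is forced.} The vertices $2,3,5$ are pairwise non-adjacent, so at most one of them lies in the clique part. If one did, say vertex $2$, then $3$ and $5$ would lie in the independent part. Then $2$ would be adjacent to $3$, which is a contradiction. Hence $2,3,5$ all lie in the independent part. Since $0,1$ are adjacent to $2,3,5$ and to each other, both $0,1$ must lie in the clique part.

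\emph{Conversely,} every injective map with this placement is an embedding.

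\textbf{The density polynomial.} Counting placements as in \Cref{thm:graph_36} gives, for $\V y\in\I S_4$,
\[
p(F,\blow{B}{\V y})=60\left(y_0^2y_1^3(y_2+y_3)+y_3^2y_2^3(y_0+y_1)\right).
\]

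\textbf{Optimisation.} First fix $s:=y_0+y_1$. By \Cref{pr:F2} with $a=2$, $b=3$ (after rescaling by $s$), the term $y_0^2y_1^3$ is uniquely maximised at $(y_0,y_1)=(2s/5,3s/5)$, where it equals $\frac{108}{3125}s^5$. Symmetrically, $y_3^2y_2^3$ is uniquely maximised at $(y_3,y_2)=(2(1-s)/5,\,3(1-s)/5)$. This leaves the one-variable problem of maximising
\[
\frac{6480}{3125}\left(s^5(1-s)+s(1-s)^5\right).
\]
By \Cref{pr:ClaimF3}, the maximisers are exactly $s\in\{y_0,1-y_0\}$ with $y_0$ from~\eqref{eq:y0}. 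These two solutions are swapped by the automorphism of $B$ exchanging $0\leftrightarrow3$ and $1\leftrightarrow2$. Taking $s=y_0$ gives exactly $\V x$, once we identify $y_0=\alpha_0$ as the root $0.1677\ldots$ of the quartic, as already noted after \Cref{pr:ClaimF3}.

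\textbf{Value.} Using $\alpha_0^4-2\alpha_0^3+\frac73\alpha_0^2-\frac43\alpha_0+\frac16=0$, reduce $\frac{6480}{3125}\left(\alpha_0^5(1-\alpha_0)+\alpha_0(1-\alpha_0)^5\right)$ to $\frac{288}{125}\alpha_0^2-\frac{288}{125}\alpha_0+\frac{288}{625}$. As a consistency check, \Cref{thm:graph_36} uses the same function with prefactor $\frac{15}{4}$ and gets value coefficients $\frac{25}{6}$; scaling by $\frac{6480}{3125}\cdot\frac{4}{15}$ turns $\frac{25}{6}$ into $\frac{288}{125}$ and $\frac56$ into $\frac{288}{625}$, as required.

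\textbf{Main obstacle.} The only delicate step is the embedding analysis, namely excluding every other placement of $F$ in $\blow{B}{}$. The rest is routine.
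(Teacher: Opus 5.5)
Your proposal is correct and follows the paper's proof essentially step for step. The common argument has four parts: the same embedding analysis (the connected $T_{1,1,3}$ component goes into one $U_i$, with $0,1$ in the clique part and $2,3,5$ in the independent part, and the isolated vertex goes into $U_{1-i}$); the same polynomial $60\bigl(y_0^2y_1^3(y_2+y_3)+y_3^2y_2^3(y_0+y_1)\bigr)$; the reduction via \Cref{pr:F2} at fixed $s=y_0+y_1$; and \Cref{pr:ClaimF3} applied to $\frac{1296}{625}\bigl(s^5(1-s)+s(1-s)^5\bigr)$. Two small differences are in your favour. The written proof states $(y_2,y_3)=(2(1-y)/5,3(1-y)/5)$, with the indices swapped relative to the stated $\V x$, whereas your $(y_3,y_2)=(2(1-s)/5,3(1-s)/5)$ is the correct assignment. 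Your explicit reduction of the value modulo the quartic is also correct, and the paper does not spell it out.
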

\begin{proof}
    We need to prove only that $\V x$ is the unique maximiser. Note that $F \coloneqq F_{55}$ can be obtained from the complete bipartite graph $T_{2, 3}$, adding an edge into the smaller part and  then adding an isolated vertex. Note that every embedding of $F$ in $\blow{B}{V_0,V_1,V_2,V_3}$ maps all vertices of the 5-vertex connected component of $F$ to $U_i$ for some $i\in\{0,1\}$ and the isolated vertex to $U_{1-i}$, where we define $U_0 := V_0 \cup V_1$ and $U_1 := V_2 \cup V_3$. Furthermore, if the connected component of $F$ with five vertices is mapped to $U_i=V_j \cup V_{h}$ (where $V_{h}$ is the clique part), then the three pairwise non-adjacent vertices of the component are mapped to $V_j$, and the remaining two vertices are mapped to $V_{h}$. Thus, by~\eqref{eq:t}, we have for $\V y\in\I S_4$ that
    \[
    p(F,\blow{B}{\V y}) = \frac{6!}{3!\,2!}\left( y_0^2 y_1^3 \paren{y_2 + y_3} + y_2^3 y_3^2 \paren{y_0 + y_1}\right) = 60 \paren{y_0^2 y_1^3 \paren{y_2 + y_3} + y_2^3 y_3^2 \paren{y_0 + y_1}}.
    \]
    Take any maximiser $\V y\in\I S_4$. By \cref{pr:F2} when we fix $y:=y_0 + y_1$, we have $\paren{y_0, y_1} = \paren{2y / 5, 3y / 5}$ and $\paren{y_2, y_3} = \paren{2(1 - y) / 5, 3(1 - y) / 5}$. 
    Thus, we have by routine calculations that
    \[
    p(F,\blow{B}{\V y}) = \frac{1296}{625} \paren{y^5(1-y) + y(1 - y)^5}.
    \]
 By~\cref{pr:ClaimF3},  the maximisers are exactly when $y\in \{y_0,1-y_0\}$, where $y_0$ is given in~\eqref{eq:y0}, as desired.
\end{proof}

\begin{theorem}
    \label{thm:graph_56}
    \rationalConditionIPartI{56}{6}{01, 23, 24, 25, 34, 35, 45}{15/32}{0/1,2/3,2/4,2/5,3/4,3/5,4/5}{}
    \rationalConditionIPartII{2}{00, 11}{$1/2,1/2$}{6}{2}{}{0/0,1/1}{56}
\end{theorem}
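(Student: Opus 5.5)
The plan is to apply \Cref{th:PST7.1} with $F\coloneqq F_{56}=K_2\sqcup K_4$, $B=(2,\{00,11\})$ (two disjoint cliques), $\V a=(1/2,1/2)$, $\N=6$ and $\tau=([2],\{\})$, two non-adjacent roots. First, the matching lower bound. Every embedding of $F$ into a blowup of $B$ sends the $K_4$ into one clique part and the $K_2$ into the other, since two non-adjacent vertices cannot lie in the same clique part. So, by~\eqref{eq:t}, for $x\in[0,1]$,
\[
p(F,\blow{B}{x,1-x})=\frac{6!}{2!\,4!}\left(x^2(1-x)^4+x^4(1-x)^2\right)=15\,x^2(1-x)^2\left(x^2+(1-x)^2\right).
\]
At $x=1/2$ this equals $15\cdot\frac1{16}\cdot\frac12=15/32$.

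For the upper bound $\lambda_F\le 15/32$, I would use \texttt{FlagAlgebraToolbox} to solve the semidefinite program~\eqref{eq:FAMain} with $\N=6$. Since every type needed has at most $4$ vertices, this is small. I would then round the numerical solution to an exact rational certificate. When rounding, I would project each $X^\tau$ onto the orthogonal complement of the vectors $\V v^{\tau,s}_{(G,f)}$ arising from the extremal construction. This keeps the bound exactly tight and forces $c_H=0$ precisely on the $6$-vertex graphs that occur in blowups of~$B$.

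Next I would verify the hypotheses of \Cref{th:PST7.1}.
\begin{itemize}
\item Condition~\ref{it:PST7.12a} holds trivially, because $\tau$ is an induced subgraph of $F$, so $\tau$-free graphs contain no copy of~$F$.
\item For condition~\ref{it:PST7.12b}, the two non-adjacent roots of $\tau$ cannot be mapped to the same looped vertex. Hence they go to $0$ and $1$ in some order, which is unique up to the swap automorphism of~$B$.
\item For condition~\ref{it:PST7.12c}, each vertex $i\in\{0,1\}$ of $B$ has neighbourhood $\{i\}$ in the image of $\tau$, so the two neighbourhoods differ.
\item Condition~\ref{it:PST7.13} amounts to checking that every $H\in\C F_6^0$ with $c_H=0$ is a disjoint union of at most two cliques. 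This is a finite computer check.
\item Finally, \Cref{it:PST7.1i} requires $X^\tau$ for this $\tau$ to have corank exactly~$1$, which is read off from the exact certificate.
\end{itemize}
The theorem then gives perfect $B$-stability together with uniqueness of $\V a$ up to automorphisms of $B$.

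Uniqueness can also be checked by hand. Put $t=x(1-x)\in[0,1/4]$, so that $x^2+(1-x)^2=1-2t$ and the density equals $15t^2(1-2t)$. Its derivative $15(2t-6t^2)$ is positive on $(0,1/4]$, so the maximum is attained only at $t=1/4$, that is, only at $x=1/2$.

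The main obstacle is producing an exact certificate in which $X^\tau$ for $\tau=([2],\{\})$ has corank exactly $1$ and all non-blowup graphs receive strictly positive slack. If the numerical solution is degenerate and gives a larger kernel, I would first try adding a small regularisation that pushes the SDP towards a maximal-rank solution, and only then round to rationals.
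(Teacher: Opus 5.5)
Your proposal is correct and follows the paper's route. The paper likewise proves $\lambda_{F_{56}}\le 15/32$ with an exact $\N=6$ flag algebra certificate checked by computer, and applies Theorem~\ref{th:PST7.1} through Item~\ref{it:PST7.1i} with $\tau=([2],\{\})$. Your hand verifications are also sound: the density $15x^2(1-x)^2\left(x^2+(1-x)^2\right)$, Items~\ref{it:PST7.12a}--\ref{it:PST7.12c}, and the uniqueness of $x=1/2$.

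One phrasing is inaccurate. Exact tightness only forces $c_H=0$ on the graphs occurring in the construction. Strict positivity of all other slacks does not come from the projection; it is exactly what the computer check for Item~\ref{it:PST7.13} must confirm, which is how you in fact treat it.
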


\begin{theorem}
    \label{thm:graph_58}
    \rationalConditionIPartI{58}{6}{01, 02, 03, 05, 14, 15, 45}{45/512}{0/1,0/2,0/3,0/5,1/4,1/5,4/5}{}
    \rationalConditionIPartII{4}{01, 12, 23, 11, 22}{$1/4,1/4,1/4,1/4$}{6}{4}{01,02,13}{0/1,1/2,2/3,1/1,2/2}{58}
\end{theorem}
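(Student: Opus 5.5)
The theorem has two parts: the value $\lambda_{F_{58}}=45/512$, and perfect $B$-stability with uniqueness of $\V x$. I would prove them together by checking the hypotheses of \Cref{th:PST7.1} with Item~\ref{it:PST7.1i}. The final clause of that theorem then gives uniqueness of $\V x=(1/4,1/4,1/4,1/4)$ up to automorphisms of $B$, so no separate polynomial-optimisation argument is needed.

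\emph{Lower bound.} First I would compute $p(F_{58},\blow{B}{\V y})$ directly. In $F:=F_{58}$, the vertices $1,4,5$ form a triangle, vertex $0$ is adjacent to $1$ and $5$, and $2,3$ are pendant vertices attached to $0$. In a blowup of the path pattern $B$ (vertices $0,1,2,3$, with loops at $1$ and $2$), the non-adjacent twins $2,3$ must lie in an independent part. Tracing the adjacencies forces exactly two kinds of embedding:
\begin{itemize}
\item $f(2),f(3)\in V_0$, $f(0)\in V_1$, $f(1),f(5)\in V_2$ and $f(4)\in V_3$;
\item the mirror image under the automorphism $i\mapsto 3-i$ of $B$.
\end{itemize}
Conversely, every injection of either kind is an embedding. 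Hence, by~\eqref{eq:t},
\[
p(F,\blow{B}{\V y})=\frac{6!}{2!\,2!}\,\bigl(y_0^2y_1y_2^2y_3+y_0y_1^2y_2y_3^2\bigr),
\]
which equals $360/4^6=45/512$ at $\V y=(1/4,1/4,1/4,1/4)$.

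\emph{Upper bound and stability conditions.} I would then verify the remaining hypotheses of \Cref{th:PST7.1} using the computer certificate:
\begin{enumerate}[1)]
\item Check that the $N=6$ certificate satisfies identity~\eqref{eq:FAMain} with $u=45/512$, that every $X^\tau$ is positive semi-definite, and that every slack $c_H$ is non-negative. This is Item~\ref{it:PST7.11}.
\item Take $\tau=([4],\{01,02,13\})$, the path $2$--$0$--$1$--$3$. It is an induced subgraph of $F$ (on vertices $2,0,1,4$), so Condition~\ref{it:PST7.12a} holds trivially: a $\tau$-free graph contains no induced $F$.
\item For Condition~\ref{it:PST7.12b}, enumerate the homomorphisms from $\tau$ to $B$ by hand. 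The path must map with its middle edge $01$ inside the looped parts, giving $0\mapsto1$, $1\mapsto2$, $2\mapsto0$, $3\mapsto3$, up to the reflection of $B$. The alternatives need to be ruled out explicitly, for instance the two middle vertices landing in the same looped part.
\item For Condition~\ref{it:PST7.12c}, check that the four parts of $B$ have pairwise distinct neighbourhoods in the image of $\tau$. This holds because the image hits all four parts and the neighbourhoods in $B$ are $\{1\}$, $\{0,1,2\}$, $\{1,2,3\}$ and $\{2\}$.
\item For Item~\ref{it:PST7.13}, run through every $6$-vertex graph $H$ with $c_H=0$ and confirm that it admits a homomorphism to $B$.
\item Finally, confirm that $\tau$ appears as a type in the certificate and that $X^\tau$ has a one-dimensional kernel.
\end{enumerate}

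The main obstacle is finding a certificate that meets the last two requirements simultaneously: zero slack only on blowups of $B$, and corank exactly $1$ for $X^\tau$. Solver output typically has near-degenerate eigenvalues and tiny slacks that must be rounded to exact rationals without destroying positive semi-definiteness. I would first solve the semi-definite program in floating point and identify the kernel of $X^\tau$ from the flag densities of the extremal blowup. I would then project onto the orthogonal complement of that kernel and round to rationals, and finally re-verify every condition in exact arithmetic.
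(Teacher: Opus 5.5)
Your proposal is correct and follows the paper's approach. The paper establishes this case by checking the hypotheses of Theorem~\ref{th:PST7.1} with Item~\ref{it:PST7.1i} for an $N=6$ certificate and $\tau=([4],\{01,02,13\})$, and obtains uniqueness of $(1/4,1/4,1/4,1/4)$ from that theorem's final clause; your hand computations of the blowup density $360/4^6=45/512$ and of the unique homomorphism of $\tau$ into $B$ are correct and only make explicit what the paper's script verifies.
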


\begin{theorem}
    \label{thm:graph_64}
    \rationalConditionIPartI{64}{6}{03, 04, 05, 12, 14, 15, 45}{5/108}{0/3,0/4,0/5,1/2,1/4,1/5,4/5}{}
    \rationalConditionIPartII{6}{00, 11, 22, 33, 44, 55, 01, 12, 23, 34, 45, 50}{$1/6,1/6,1/6,1/6,1/6,1/6$}{6}{4}{01,02,13}{0/0,1/1,2/2,3/3,4/4,5/5,0/1,1/2,2/3,3/4,4/5,5/0}{64}
\end{theorem}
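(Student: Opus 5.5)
The plan is to obtain this directly from \Cref{th:PST7.1} with Item~\ref{it:PST7.1i}, so that the uniqueness of the maximiser comes for free from the ``moreover'' part. No separate analytic optimisation over $\I S_6$ is then needed, unlike in \Cref{thm:graph_4} or \Cref{thm:graph_36}. The work splits into a hand-checkable lower bound, a computer-generated upper-bound certificate, and a computer check of the remaining hypotheses of \Cref{th:PST7.1}.

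\emph{Lower bound.} Write $F:=F_{64}$. Its vertices $4,5$ are adjacent and have common neighbours $0,1$, which are non-adjacent to each other. The vertices $3$ and $2$ are pendant vertices attached to $0$ and $1$, respectively. Thus $|\mathrm{aut}(F)|=4$: one can swap $4,5$, and one can swap the pairs $(0,3)\leftrightarrow(1,2)$. In a blowup of the looped $6$-cycle $B$, the natural embeddings put $\{4,5\}$ into a clique part $V_j$, put $0$ and $1$ into $V_{j-1}$ and $V_{j+1}$, and put $3$ and $2$ into $V_{j-2}$ and $V_{j+2}$. This is valid because $V_{j-2}$ and $V_{j+2}$ are non-adjacent in the $6$-cycle. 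There are $12$ such part assignments. I would then enumerate all homomorphisms $F\to B$, by hand or by machine, to confirm that no others exist. For instance, if $4$ and $5$ lay in adjacent parts, the common neighbours $0,1$ would be forced into parts that make them adjacent or make the pendant vertices violate non-edges. By~\eqref{eq:t}, the uniform vector then gives
\[
p(F,\blow{B}{\V x})=\frac{6!}{4}\cdot\frac{12}{6^6}=\frac{5}{108}.
\]

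\emph{Upper bound and stability.} Using \texttt{FlagAlgebraToolbox} with $\N=6$, I would solve the semidefinite program and round it to an exact rational certificate establishing~\eqref{eq:FAMain} with $u=5/108$. This is Item~\ref{it:PST7.11}. For $\tau=([4],\{01,02,13\})\cong P_4$, I would check the following.
\begin{itemize}
\item Condition~\ref{it:PST7.12a} holds trivially, since $P_4$ (for example $3,0,4,1$) is an induced subgraph of $F$.
\item For Condition~\ref{it:PST7.12b}, up to rotations and reflections of the $6$-cycle there is a unique homomorphism $P_4\to B$. Two non-adjacent roots cannot share a looped part, so $P_4$ must go onto four consecutive parts. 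This still needs to be checked for non-injective maps, where two adjacent vertices of $\tau$ go into the same looped part.
\item For Condition~\ref{it:PST7.12c}, the six vertices of $B$ must have distinct traces on the image of $\tau$. With image $\{0,1,2,3\}$, these traces are $\{0,1\}$, $\{0,1,2\}$, $\{1,2,3\}$, $\{2,3\}$, $\{3\}$ and $\{0\}$, which are all distinct.
\end{itemize}
For Item~\ref{it:PST7.13}, every $6$-vertex graph with zero slack $c_H$ must be a blowup-induced subgraph of $B$; this is a finite check. Finally, I would verify that $X^{\tau}$ has co-rank exactly $1$.

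\emph{Main obstacle.} I expect the main obstacle to be the certificate itself. The SDP must be tight at $N=6$, and rounding must preserve both positive semidefiniteness and the exact co-rank $1$ of $X^\tau$. To achieve this, I would first project the numerical solution onto the known kernel vector, namely the $\tau$-flag densities at a typical root tuple of the uniform blowup. I would then round within the orthogonal complement before checking the identity exactly.
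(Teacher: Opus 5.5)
Your proposal is correct and follows the paper's route: the paper derives the whole statement from a computer-verified certificate satisfying \Cref{th:PST7.1} with Item~\ref{it:PST7.1i}, $\N=6$ and $\tau=([4],\{01,02,13\})$, and uniqueness of the uniform maximiser comes from the ``moreover'' clause. Your hand checks also hold up. There are exactly $12$ homomorphisms $F_{64}\to B$, giving $5/108$, and the non-injective case you left open for $\tau\cong P_4$ cannot occur: $\Gamma_B(i)\cap\Gamma_B(i+2)=\{i+1\}$ forces each inner vertex of the path onto the part strictly between those of its two neighbours.
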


\begin{theorem}
    \label{thm:graph_68}
    \rationalConditionIPartI{68}{6}{02, 03, 05, 12, 13, 15, 45}{72/625}{0/2,0/3,0/5,1/2,1/3,1/5,4/5}{}
    \rationalConditionIPartII{5}{01, 12, 23, 34, 40}{$1/5,1/5,1/5,1/5,1/5$}{6}{4}{01,02,13}{0/1,1/2,2/3,3/4,4/0}{68}
\end{theorem}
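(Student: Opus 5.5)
The plan is to apply Theorem~\ref{th:PST7.1} with $B=C_5$ (the pattern $(5,\{01,12,23,34,40\})$), $\V a=(1/5,\ldots,1/5)$, $\N=6$ and $\tau=([4],\{01,02,13\})$, which is the path $2\,0\,1\,3$. Since Item~\ref{it:PST7.1i} is used, that theorem gives the value of $\lambda_{F_{68}}$, perfect $B$-stability and uniqueness of the maximiser all at once. So the work is to establish the matching lower bound and then verify each hypothesis.

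\emph{Lower bound.} Compute $p(F_{68},\blow{B}{\V a})$ directly. In $F_{68}$ the vertices $0,1$ are non-adjacent twins. They are adjacent to the independent set $\{2,3,5\}$, and $4$ is a pendant vertex attached to $5$. Let $0,1$ go to part $j$ of the blowup. Then $2,3,5$ must lie in the parts $j\pm1$, which are non-adjacent to each other, so this is consistent. Vertex $4$ must be adjacent to the part containing $5$ and non-adjacent to the parts containing $0,1,2,3$. This forces $2,3$ into one of $j\pm1$, $5$ into the other, and $4$ into the part beyond $5$.

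This gives $5\cdot 2=10$ part assignments, so $t(F_{68},\blow{B}{\V a})=10/5^6$. Since $|\mathrm{aut}(F_{68})|=4$, \eqref{eq:t} gives $p=\frac{720}{4}\cdot\frac{10}{15625}=\frac{72}{625}$, matching the claimed value.

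\emph{Upper bound and hypotheses of Theorem~\ref{th:PST7.1}.}
\begin{itemize}
\item Item~\ref{it:PST7.11}: produce, with \texttt{FlagAlgebraToolbox}, a flag algebra certificate on $\N=6$ vertices as in~\eqref{eq:FAMain} with $u=72/625$. Round the numerical SDP solution to exact rational positive semi-definite matrices $X^\tau$ and non-negative slacks $c_H$, and verify the identity exactly.
\item Item~\ref{it:PST7.12a}: this is trivial because $\tau$ is an induced subgraph of $F_{68}$, for example on the vertices $4,5,0,2$.
\item Item~\ref{it:PST7.12b}: a homomorphism of the path $\tau$ into the loopless $C_5$ cannot identify adjacent vertices. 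Identifying the two ends, or vertices at distance two, also fails: the adjacency pattern forces the images to be four consecutive vertices of $C_5$, and these are unique up to rotation and reflection.
\item Item~\ref{it:PST7.12c}: with image $\{0,1,2,3\}$, the traces of the neighbourhoods are $\{1\},\{0,2\},\{1,3\},\{2\},\{0,3\}$, which are pairwise distinct.
\item Item~\ref{it:PST7.13}: check by computer that every $6$-vertex graph $H$ with $c_H=0$ admits a homomorphism to $C_5$.
\item Item~\ref{it:PST7.1i}: confirm that the type $\tau$ appears in the certificate and that $X^\tau$ has co-rank exactly~$1$.
\end{itemize}

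The main obstacle is the exact rounding in Item~\ref{it:PST7.11}. The rounding must keep the slacks zero precisely on the graphs that are blowup-compatible with $C_5$, and must keep $X^\tau$ at co-rank exactly one rather than zero. My first approach is to project the numerical solution onto the kernel forced by the construction $\blow{B}{\V a}$: the vectors of rooted densities $\V v^{\tau,s}$ must lie in $\ker X^\tau$. I would then round in a basis orthogonal to that kernel before rationalising.
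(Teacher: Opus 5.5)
Your proposal is correct and follows the paper's route: the paper's proof is a computer-verified $\N=6$ flag algebra certificate that satisfies Theorem~\ref{th:PST7.1} via Item~\ref{it:PST7.1i}, with $\tau=P_4$ and $B=C_5$. Your hand checks (the ten part assignments and $|\mathrm{aut}(F_{68})|=4$ giving $p(F_{68},\blow{B}{\V a})=72/625$, and Items~\ref{it:PST7.12a}--\ref{it:PST7.12c}) are accurate. One small correction: $X^\tau$ cannot have co-rank zero in a sharp certificate, because the rooted density vector of $\tau$ in $\blow{B}{\V a}$ is forced into $\ker X^\tau$; the actual rounding danger is a kernel of dimension two or more.
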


\begin{theorem}
    \label{thm:graph_69}
    \rationalConditionIPartI{69}{6}{02, 03, 04, 12, 13, 15, 45}{72/625}{0/2,0/3,0/4,1/2,1/3,1/5,4/5}{}
    \rationalConditionIPartII{5}{01, 12, 23, 34, 40}{$1/5,1/5,1/5,1/5,1/5$}{6}{4}{01,02,13}{0/1,1/2,2/3,3/4,4/0}{69}
\end{theorem}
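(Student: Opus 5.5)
The proof follows the same template as the other theorems in this subsection: a lower-bound construction, a flag algebra upper bound, and then a check of the hypotheses of \Cref{th:PST7.1} with Item~\ref{it:PST7.1i}. Since Item~\ref{it:PST7.1i} gives both perfect $B$-stability and uniqueness of the maximising vector, no separate optimisation argument should be needed.

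The first step is the lower bound $\lambda_{F_{69}}\ge p(F_{69},\blow{B}{\V x})=72/625$ for the balanced blowup of the $5$-cycle $B=(5,\{01,12,23,34,40\})$. I would compute this from~\eqref{eq:t} by enumerating all homomorphisms from $F:=F_{69}$ to $B$. Since $B$ is loopless, each part is independent. Hence every edge of $F$ must go between cyclically consecutive parts, and every non-edge must go to equal or non-consecutive parts. Each homomorphism contributes $5^{-6}$ to $t(F,\blow{B}{\V x})$. Multiplying by $6!/|\mathrm{aut}(F)|$ should give $72/625$. This is a finite count, so I would have it checked by computer alongside the certificate.

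The second step is the upper bound. I would take the certificate $\C C$ with $\N=6$ produced by \texttt{FlagAlgebraToolbox}, in exact rational arithmetic. It consists of positive semi-definite matrices $X^\tau$ and non-negative slacks $c_H$ satisfying~\eqref{eq:FAMain} with $u=72/625$. This gives Item~\ref{it:PST7.11}.

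Next come the combinatorial hypotheses for $\tau=(4,\{01,02,13\})$, which is the path $P_4$ written as $3$--$1$--$0$--$2$.
\begin{itemize}
\item Item~\ref{it:PST7.12a}: this is immediate because $\tau$ is an induced subgraph of $F$. For example, $4,0,2,1$ induces the path $4$--$0$--$2$--$1$, since $41$, $42$ and $01$ are non-edges of $F$. Any $\tau$-free graph therefore has $F$-density zero.
\item Item~\ref{it:PST7.12b}: take a homomorphism of the path $a$--$b$--$c$--$d$ into the $5$-cycle. Each consecutive pair must map to adjacent vertices.
  \begin{itemize}
  \item $a$ and $c$ cannot share an image: $d$ is adjacent to $c$ but not to $a$. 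Symmetrically, $b$ and $d$ cannot share an image.
  \item Hence the image is a walk $v_0v_1v_2v_3$ with $v_0\ne v_2$ and $v_1\ne v_3$. This is a genuine path on $4$ of the $5$ vertices, with $v_0$ and $v_3$ at distance $2$, so all non-edges are respected.
  \end{itemize}
  Such paths are unique up to the dihedral automorphisms of $B$.
\item Item~\ref{it:PST7.12c}: writing the image as $v_0v_1v_2v_3$ with $w$ the fifth vertex, the neighbourhoods in the image are $\{v_1\},\{v_0,v_2\},\{v_1,v_3\},\{v_2\}$ and $\{v_0,v_3\}$ for $w$. These are pairwise distinct.
\end{itemize}

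Finally, Items~\ref{it:PST7.13} and~\ref{it:PST7.1i} are properties of the certificate and are verified by computer. For Item~\ref{it:PST7.13}, every $H\in\C F^0_6$ with $c_H=0$ must admit a homomorphism to $B$, which is a finite check over $156$ graphs. For Item~\ref{it:PST7.1i}, the matrix $X^\tau$ for this $P_4$-type must have co-rank exactly $1$.

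I expect the main obstacle to be the certificate itself: finding an exact rounding of the numerical SDP solution that keeps $X^\tau$ of co-rank exactly $1$ and produces no zero slack on a graph outside $\blow{B}{}$. This is where the choice of $\tau$ matters. If the rounding failed, I would first try projecting onto the known kernel vectors coming from the construction before rationalising.
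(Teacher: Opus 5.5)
Your proposal is correct and matches the paper's approach: the theorem rests on a computer-verified $N=6$ flag algebra certificate satisfying Theorem~\ref{th:PST7.1} via Item~\ref{it:PST7.1i} with $\tau=([4],\{01,02,13\})$, and no separate optimisation argument is needed. Your hand verification of Items~\ref{it:PST7.12a}--\ref{it:PST7.12c} is sound, and the lower-bound count does work out: $F_{69}$ has exactly $10$ homomorphisms to the $5$-cycle and $|\mathrm{aut}(F_{69})|=4$, so $180\cdot 10/5^6=72/625$.
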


\begin{theorem}
    \label{thm:graph_71}
    \rationalConditionIPartI{71}{6}{01, 02, 03, 04, 12, 35, 45}{45/512}{0/1,0/2,0/3,0/4,1/2,3/5,4/5}{}
    \rationalConditionIPartII{4}{01, 12, 23, 00, 33}{$1/4,1/4,1/4,1/4$}{6}{4}{01,02,13}{0/1,1/2,2/3,0/0,3/3}{71}
\end{theorem}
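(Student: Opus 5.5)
The plan is to apply \Cref{th:PST7.1} with Item~\ref{it:PST7.1i}, exactly as in the other theorems of this subsection. Here $F:=F_{71}$, $B=([4],\{00,01,12,23,33\})$ (a path $0\,1\,2\,3$ whose end vertices carry loops), $\V a=(1/4,1/4,1/4,1/4)$, $\N=6$ and $\tau=([4],\{01,02,13\})$, which is an induced $P_4$. Since Item~\ref{it:PST7.1i} delivers both perfect $B$-stability and uniqueness of the maximiser $\V a$, no separate optimisation argument over $\I S_4$ is needed, unlike in \Cref{thm:graph_4} or \Cref{thm:graph_36}.

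\textbf{Lower bound.} First I verify $p(F,\blow{B}{\V a})=45/512$. Every embedding of $F$ into a blowup of $B$ must place the triangle $\{0,1,2\}$ so that at least two of its vertices lie in a clique part $V_0$ or $V_3$. The $4$-cycle $0\,3\,5\,4$ (with $3$ and $4$ non-adjacent and $0$ and $5$ non-adjacent) then forces $3$ and $4$ into a common independent part, with $5$ placed on the other side. I will enumerate these placements, for example $\{1,2\}\subseteq V_0$, $0\in V_1$, $\{3,4\}\subseteq V_2$, $5\in V_3$, together with their images under the reflection $i\mapsto 3-i$ of $B$. Summing the resulting monomials via~\eqref{eq:t} and evaluating at $\V a$ should give $45/512$. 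This is a finite and routine check, which I would also confirm by computer.

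\textbf{Upper bound and hypotheses.} Next, I produce with \texttt{FlagAlgebraToolbox} a certificate $\C C$ on $\N=6$ vertices proving $\lambda_F\le 45/512$ as in~\eqref{eq:FAMain}, rounded to exact rationals so that the $X^\tau$ are genuinely positive semi-definite and the slacks $c_H$ are non-negative. I then check the remaining items of \Cref{th:PST7.1} in turn.
\begin{itemize}
\item[(2a)] This holds trivially, because $\tau$ is an induced subgraph of $F$: for instance $2\,0\,3\,5$ is an induced path in $F$. Hence $\tau$-free graphs contain no copy of $F$.
\item[(2b)] Up to the reflection of $B$, the only homomorphism of an induced $P_4$ into $B$ should be the one onto the path $0\,1\,2\,3$. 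Any attempt to place two vertices of $\tau$ in a loop part, or to fold the path, creates a forbidden edge or non-edge. This is a finite check.
\item[(2c)] Under that homomorphism the four vertices of $B$ are themselves the image, and their neighbourhoods within it, namely $\{0,1\}$, $\{0,2\}$, $\{1,3\}$ and $\{2,3\}$, are pairwise distinct.
\item[(3)] Every $6$-vertex graph $H$ with $c_H=0$ must admit a homomorphism to $B$. This is a finite computer check over $\C F_6^0$.
\item[(i)] The matrix $X^\tau$ for the type $\tau=P_4$ in $\C C$ must have co-rank exactly $1$.
\end{itemize}

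\textbf{Main obstacle.} I expect the hard part to be obtaining a certificate that is simultaneously sharp and structured. The slacks must vanish only on $B$-homomorphic graphs, as Item~3 requires, and $X^{P_4}$ must have a one-dimensional kernel. A generic SDP solution may have larger kernels, or accidental zero slacks, that break these conditions. If that happens, I would first re-solve the SDP while forcing $X^{P_4}$ to be orthogonal only to the flag-density vector of $\blow{B}{\V a}$, and while fixing $c_H=0$ exactly on the graphs that are homomorphic to $B$. Only afterwards would I round to rationals and verify exactly.
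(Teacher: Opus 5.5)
Your proposal is correct and follows the paper's route. The paper establishes this theorem by a computer-verified flag algebra certificate with $N=6$, checked against \Cref{th:PST7.1} via Item~\ref{it:PST7.1i} with $\tau=([4],\{01,02,13\})$; uniqueness of $(1/4,1/4,1/4,1/4)$ then comes from the co-rank-$1$ condition, as you say. Your hand checks are also accurate. The only homomorphisms of $F_{71}$ to $B$ are $\{1,2\}\to 0$, $0\to 1$, $\{3,4\}\to 2$, $5\to 3$ and its reflection, giving $p(F_{71},\blow{B}{\V x})=180\,x_0x_1x_2x_3(x_0x_2+x_1x_3)$, which equals $45/512$ at the uniform vector. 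And $\tau\cong P_4$ maps uniquely, up to reflection, onto the path $0\,1\,2\,3$ with pairwise distinct neighbourhood traces.
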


\begin{theorem}
    \label{thm:graph_72}
    \rationalConditionIPartI{72}{6}{01, 02, 03, 04, 12, 15, 35, 45}{5/54}{0/1,0/2,0/3,0/4,1/2,1/5,3/5,4/5}{}
    \rationalConditionIPartII{6}{01, 12, 20, 34, 45, 53, 05, 14, 23}{$\frac{1}{6},\frac{1}{6},\frac{1}{6},\frac{1}{6},\frac{1}{6},\frac{1}{6}$}{7}{3}{01,02,12}{???}{72}
\end{theorem}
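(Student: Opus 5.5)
The plan is to verify the hypotheses of \Cref{th:PST7.1} with Item~\ref{it:PST7.1i}, taking $\N=7$, the triangular prism $B=([6],\{01,12,20,34,45,53,05,14,23\})$, $\V a=(1/6,\ldots,1/6)$ and $\tau=K_3=([3],\{01,02,12\})$. The conclusion of that theorem then gives three things at once: the upper bound $\lambda_{F_{72}}\le p(F_{72},\blow{B}{\V a})$, perfect $B$-stability, and uniqueness of the maximiser $\V a$ up to automorphisms of~$B$.

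\textbf{Lower bound.} First I will compute $p(F_{72},\blow{B}{\V a})=5/54$ directly. I will enumerate all homomorphisms $f\colon F_{72}\to B$, that is, all assignments of the six vertices of $F_{72}$ to parts that preserve both edges and non-edges. Since $B$ has no loops, adjacent vertices of $F_{72}$ must go to distinct adjacent parts, while non-adjacent vertices go to the same part or to a non-adjacent pair. Each homomorphism $f$ contributes $6^{-6}$ to $t(F_{72},\blow{B}{\V a})$, and \eqref{eq:t} converts this to the density. This step is routine, and we can cross-check it by computer.

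\textbf{Upper bound.} Next, using \texttt{FlagAlgebraToolbox}, I will produce a certificate as in \eqref{eq:FAMain} with $\N=7$ and $u=5/54$. We will round it to exact rationals so that each $X^\tau$ is positive semi-definite, every slack satisfies $c_H\ge 0$, and the slacks vanish exactly on those $H\in\C F^0_7$ that have positive density in $\blow{B}{\V a}$.

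\textbf{Combinatorial conditions.} These I will check by hand.
\begin{itemize}
\item Condition~\ref{it:PST7.12a} holds trivially, because $\tau=K_3$ is an induced subgraph of $F_{72}$ (on vertices $0,1,2$). Hence $\tau$-free graphs have $F_{72}$-density zero.
\item Condition~\ref{it:PST7.12b} holds as follows. Since $B$ has no loops, a homomorphism of $K_3$ must be injective onto a triangle of $B$. The only triangles of the prism are $\{0,1,2\}$ and $\{3,4,5\}$. The reflection $i\mapsto i+3 \pmod 6$ (together with the symmetric group action on each triangle, which is realised by automorphisms of the prism) shows that the homomorphism is unique up to $\mathrm{aut}(B)$.
\item Condition~\ref{it:PST7.12c} holds because, for $f(V(\tau))=\{0,1,2\}$, the traces of the neighbourhoods are pairwise distinct: $0\mapsto\{1,2\}$, $1\mapsto\{0,2\}$, $2\mapsto\{0,1\}$, $3\mapsto\{2\}$, $4\mapsto\{1\}$, $5\mapsto\{0\}$.
\end{itemize}

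\textbf{Computer-checked conditions.} Condition~\ref{it:PST7.13} requires every $7$-vertex graph $H$ with $c_H=0$ to admit a homomorphism to $B$. This is a finite search that I will run by computer over the zero-slack graphs. Item~\ref{it:PST7.1i} requires that the matrix $X^{K_3}$ in the certificate has a kernel of dimension exactly~$1$; this is an exact rank computation over $\I Q$.

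\textbf{Main obstacle.} I expect the hardest step to be obtaining the rounded certificate. The rounding must keep the slacks exactly zero on all graphs appearing in the blowup of~$B$. At the same time, it must preserve $X^{K_3}$ at co-rank exactly~$1$ rather than letting the kernel grow. The first thing I would try is to project the numerical solution onto the subspace forced by the known extremal construction, namely requiring the kernel of each $X^\tau$ to contain the rooted density vectors from $\blow{B}{\V a}$, and only then round. If $\N=7$ gives co-rank larger than~$1$ for $K_3$, the fallbacks are to try another type $\tau$ or to move to $\N=8$.
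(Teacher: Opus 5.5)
Your proposal is correct and follows the paper's route exactly. The paper applies \Cref{th:PST7.1} through Item~\ref{it:PST7.1i} with $N=7$ and $\tau=K_3$. The sharp certificate, the zero-slack condition and the co-rank of $X^{K_3}$ are machine-verified, and Conditions~\ref{it:PST7.12a}--\ref{it:PST7.12c} are the easy checks you give. Your hand computation of the lower bound is also right: $|\mathrm{aut}(F_{72})|=2$, and $F_{72}$ has exactly $12$ homomorphisms to the prism, one per ordered triangle, which gives $360\cdot 12/6^6=5/54$.
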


\subsection{\texorpdfstring{The special complete partite graph $F_3$}{The special complete partite graph F3}}
\label{sec:perf-stab-complete-partite}

The previous section also contained the cases (namely,  $i\in\{6, 8, 17, 32, 37, 53, 56\}$) when one of the two complementary graphs is complete partite and the `plain' flag algebra approach works. 
Note that we skipped the trivial case $\O{F}_0=K_6$ (for which $\lambda_{F}\le 1$ can be proved as in~\eqref{eq:FAMain} by taking each $X^\tau$ to be zero). 

In the two remaining complete partite cases (namely, $i\in\{1,3\}$ that were resolved in~\cite{LiuMubayiReiher23,LiuMaZhu26,Yuster26}) taking $N=8$ was not sufficient for getting the sharp upper bound. So we switched to the theory of complete partite graphs (that is, forbidding an induced co-cherry), as this does not affect the inducibility constant by the result of Brown and Sidorenko~\cite{BrownSidorenko94} mentioned in \cref{sec:introduction}. 
There are much fewer flags and one can take $N$ larger than $8$, which makes the method stronger. We still could not prove the sharp upper bound on $\lambda_{F_1}$ via flag algebras but were able to resolve the problem for $\O{F}_3=T_{2,2,1,1}$ (which was also solved in~\cite{LiuMaZhu26,Yuster26}).



\begin{theorem}
Let $F_3:=(6,\{01,23\})$. Then $\lambda_{F_3}={25}/{72}$, the $F_3$-problem is perfectly $B$-stable with $B:=(6,\{00,11,22,33,44,55\})$ and the unique maximiser of $p(F_3,B(\V x))$ is 
$x=(\frac16,\frac16,\frac16,\frac16,\frac16,\frac16)$.
\end{theorem}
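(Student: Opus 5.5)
Since $\lambda_{F_3}=\lambda_{\O F_3}$ and $\O F_3=T_{2,2,1,1}$ is complete partite, I will work with the complement problem throughout. By the result of Brown and Sidorenko~\cite{BrownSidorenko94}, $p(\O F_3,n)$ is attained by complete partite graphs. So the value of $\lambda_{F_3}$ can be computed in the flag algebra theory of graphs with no induced co-cherry $\O{P_3}$ (equivalently, complete partite graphs). This theory has far fewer flags, so I will take $\N$ noticeably larger than $8$ and ask the package for a certificate as in~\eqref{eq:FAMain} proving $\lambda_{\O F_3}\le 25/72$.

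The lower bound is the balanced complete $6$-partite graph, which is the complement of the blowup of $B=(6,\{00,11,22,33,44,55\})$. An induced $T_{2,2,1,1}$ in it picks two parts twice and two further parts once. This gives density $\frac{6!}{2!\,2!\,2!}\binom{6}{2}\binom{4}{2}\cdot 6^{-6}=90\cdot 90/46656=25/72$.

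For the uniqueness of the maximiser, I write
\[
p(F_3,\blow{B}{\V y})=90\sum y_i^2y_j^2y_ky_l,
\]
where the sum is over unordered pairs $\{i,j\}$ and disjoint pairs $\{k,l\}$. I would prefer the certificate itself to certify uniqueness by using Theorem~\ref{th:PST7.1}\,\ref{it:PST7.1i}: choose a type $\tau$ that forces the $6$-part structure and whose matrix has co-rank~1. If the package does not deliver this, I will argue by Lagrange multipliers or by smoothing. Fixing all but two coordinates, the polynomial is symmetric in those two and I expect it to be concave in the difference, so equalising helps. Boundary points with fewer parts will be compared with the $6$-part value directly.

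The main obstacle is transferring perfect stability out of the restricted theory. The certificate lives only on co-cherry-free (complement: cherry-free) graphs, while Theorem~\ref{th:PST7.1} needs a certificate valid for all graphs. I would first establish, in the restricted theory, Items~\ref{it:PST7.12} and~\ref{it:PST7.13}: every zero-slack $\N$-vertex complete partite graph has at most $6$ parts, so it maps to $B$, and a suitable $\tau$, say $K_6$ in the complement, satisfies (b) and (c), with (a) holding because $\tau$ embeds into $\O F_3$. This yields perfect stability among complete partite graphs. I would then lift it to general $G$ with a symmetrisation argument in the style of Liu--Pikhurko--Sharifzadeh--Staden~\cite{LiuPikhurkoSharifzadehStaden23}, which is the approach taken in~\cite{LiuMaZhu26}. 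First, Zykov-type cloning of non-adjacent vertices does not decrease the density and moves $G$ to a complete partite graph. Second, one shows that almost optimal $G$ are $o(n^2)$-close to $\blow{B}{}$. Third, vertex-local arguments give linear control of the edit distance, using that $B$ is $F_3$-minimal: every $5$-part blowup has density strictly below $25/72$. Failing a self-contained lift, I would cite perfect stability from~\cite{LiuMaZhu26} for this final step.
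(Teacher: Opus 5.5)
Your strategy matches the paper's. You complement to $T_{2,2,1,1}$, run flag algebras in the theory of complete partite graphs (the paper uses $N=10$), and apply Theorem~\ref{th:PST7.1} with the $6$-vertex type that puts one root in each part, with $X^\tau$ of co-rank one. Strictly, this step needs the version of \cite[Theorem~7.1]{PikhurkoSliacanTyros19} that allows a forbidden family closed under blowups. You then lift the result to all graphs.

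There are, however, genuine problems in two places.

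\textbf{Item~\ref{it:PST7.12a} of Theorem~\ref{th:PST7.1}.} Your verification of this item is false. $K_6$ is not an induced subgraph of $\O F_3=T_{2,2,1,1}$, since the clique number of $T_{2,2,1,1}$ is $4$. So forbidding $\tau$ does not kill any copies of $F$. Within complete partite graphs, this item says that blowups of $K_5$ have $T_{2,2,1,1}$-density bounded away from $25/72$. That bound is tight: the balanced $5$-partite graph gives $180\cdot 30/5^6\approx 0.3456$, against $25/72\approx 0.3472$. It therefore needs an actual optimisation over $\I S_5$. You cannot read it off the uniqueness conclusion of Theorem~\ref{th:PST7.1}, because that conclusion assumes the item.

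Your smoothing fallback does not supply the bound either. For fixed $y_i+y_j$, the restriction is a quartic in $y_i-y_j$ with positive leading coefficient. When both parts are small, it is maximised by merging them, so equalising is not automatic.

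There is also an arithmetic slip. The correct formula is $p(F_3,\blow{B}{\V y})=180\sum y_i^2y_j^2y_ky_l$, not $90\sum$, and $90\cdot 90/6^6=25/144$, not $25/72$.

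\textbf{The lift to general graphs.} Zykov cloning reaches a complete partite graph without lowering the density. But it can make $\Theta(n^2)$ changes, so on its own it bounds nothing about $\dedit(G,\blow{B}{})$ for the original $G$. Moreover, $F$-minimality of $B$ is not the input that turns $o(n^2)$-closeness into linear control.

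The paper invokes \cite[Theorem~1.1]{LiuPikhurkoSharifzadehStaden23}. Its first hypothesis is that the balanced $6$-partite limit is the unique optimiser among complete partite limits, which your restricted certificate does give. It also needs two local properties of the uniform blowup $H$ of $K_6$:
\begin{enumerate}[(i)]
\item flipping any single pair of $H$ lowers the number of induced copies of $T_{2,2,1,1}$ by $\Omega(n^4)$;
\item a new vertex that is complete or empty to each part and lies in at least $(25/72-o(1))\binom{n-1}{5}$ copies must be complete to exactly five parts.
\end{enumerate}
These are the conditions you need to state and check. Citing \cite{LiuMaZhu26} instead is a valid shortcut, but it makes the stability part of your argument a citation rather than a proof.
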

\begin{proof}
 We complement all graphs and work with $F:=\O{F}_3$, which is complete partite. The upper bound $\lambda_{F}\le {25}/{72}$ is proved via flag algebras with $N=10$ within the theory of complete partite graphs. Moreover, the obtained certificate also satisfies all assumptions of~\cite[Theorem 7.1]{PikhurkoSliacanTyros19} with $\tau=([6], \emptyset)$, including its Item~\ref{it:PST7.1i} (namely, that the co-rank of $X^\tau$ is 1). This theorem, as stated in \cite{PikhurkoSliacanTyros19}, allows also  to specify a forbidden family $\C F$ of induced subgraphs provided the property of being $\C F$-free is preserved under taking blowups (which is the case when we forbid the co-cherry $\O P_3$). We conclude  that perfect stability within the theory of all complete partite graphs holds for the $F$-problem. Specifically, there is $C>0$ such that every complete partite graph $G$ with $n\ge C$ vertices satisfies~\eqref{eq:PerfectStabDef}. Also,~\cite[Theorem 7.1]{PikhurkoSliacanTyros19} implies that $\V x$ is the unique optimal vector. These two conclusions give that, in the limit space of complete partite graphs as defined in~\cite{LiuPikhurkoSharifzadehStaden23}, the balanced complete 6-partite graph is the unique optimiser. Now, perfect stability (for general graphs) follows from~\cite[Theorem~1.1]{LiuPikhurkoSharifzadehStaden23}. In order to apply it, one has to check the following two properties for the uniform blowup $H=\blow{K_6}{V_0,\ldots,V_5}$ of large order $n$: flipping an edge in $H$ decreases the number of induced copies of $F$ by $\Omega(n^4)$ and if we attach a new vertex $v$ to $H$ such that $v$ is complete or empty to every part $V_i$ and creates at least $(25/72-o(1))\binom{n - 1}{5}$ induced copies of $F$ then $v$ is complete to exactly 5 parts of $H$. These properties can be easily checked directly (see e.g.~\cite{LiuMaZhu26}) and are also verified by our script.\end{proof}

\hide{There were two cases when the standard flag algebra approach did not yield the required result. The first such graph is $F_1$ (that consists of a single edge) for which the union of $13$ cliques of equal size gives the value of $\lambda_{F_{1}}$. However, the inducibility constant of  $F_1$  was determined by Liu, Mubayi and Reiher~\cite{LiuMubayiReiher23} with perfect stability established by Liu, Ma and Zhu~\cite{LiuMaZhu26}; so we just refer the reader to these papers. The second such graph is $F_3$, the complement of the $4$-partite Tur\'an graph. 
\sout{In order to prove Theorem~\ref{thm:graph_3}, we had to work with the theory of complete partite graphs (that is, to forbid an induced co-cherry), as this does not affect the inducibility constant by the result of Brown and Sidorenko~\cite{BrownSidorenko94}  mentioned in the \cref{sec:introduction}. Moreover, the certificate also implies perfect stability (within the theory of all graphs) for $F_3$ via the criterion in~{\cite[Theorem 7.1]{PikhurkoSliacanTyros19}} which, in fact, for any theory of graphs closed under subgraphs and blowups (in particular, to the theory of complete partite graphs). There are much fewer such graphs and we could take $\N$ as large as $10$ which was sufficient for the result.} 
The inducibility constant and perfect stability for $F_3$ also follow from the more general results by respectively Yuster~\cite{Yuster26} and Liu, Ma and Zhu~\cite{LiuMaZhu26}.}

\subsection{Cases with quasirandom extremal constructions}\label{sec:quasirandom}

There are $4$ cases (namely, $F_i$ for $i \in \{13, 70, 75, 77\}$) for which the lower bound comes from quasirandom bipartite graphs. 
Although we do not have a good definition of perfect stability for such constructions, we can describe the structure of all almost extremal graphs of order $n\to\infty$ within $o(n^2)$ edits, see Theorem~\ref{th:QR} below. In particular, we are able to prove the uniqueness of an $F_i$-extremal graphon. 
However, getting the exact value of $p(F_i,n)$ seems rather difficult so we do not pursue this direction here.

We were able to find a single argument that works in all these four cases so we treat them together. 
The following definitions will apply in this section:
\[
\begin{array}{lll}
 F_{13}:=(6,\{02,04, 12, 15\}), & p_{13}:=\frac{7-\sqrt{17}}6, & u_{13}:= \frac{11360-2720 \sqrt{17}}{2187},\\
 F_{70}:=(6,\{01, 02, 03, 04, 15, 35, 45\}),& p_{70}:=\frac78, & u_{70}:=\frac{12353145}{67108864},\\
 F_{75}:=(6,\{02, 03, 04, 12, 13, 35, 45\}),& p_{75}:=\frac79, & u_{75}:=\frac{4117715}{86093442},\\
 F_{77}:=(6,\{02, 03, 04, 12, 13, 14, 35, 45\}),& p_{77}:=\frac89,& u_{77}:=\frac{5242880}{43046721}.
\end{array}
\]

For $p\in [0,1]$, let $W_p$ be the graphon which assumes value $p$ on $(V_0\times V_1)\cup (V_1\times V_0)$
and $0$ on $V_0^2\cup V_1^2$, where $V_0:=[0,1/2)$ and $V_1:=[1/2,1]$.
The reader unfamiliar with the theory of graphons can instead consider $W_{n,p}$, a random bipartite graph with parts $V_0$ and $V_1$, each of size $n/2$, in which every pair across is an edge with probability $p$ and we take the limits of densities in typical outcomes as $n\to\infty$. Using the symmetry between $V_0$ and $V_1$, the embedding density $t(F,W_p)$ of a graph $F$ in $W_p$ can be computed as the sum  of 
$p^{e(F)}(1-p)^{|\O F[U_0,U_1]|}/2^{v(F)-1}$ over all (unordered) \emph{bipartitions} $\{U_0,U_1\}$ of $F$ (that is, the sets $U_0$ and $U_1$ are independent in $F$ and partition $V(F)$). By~\eqref{eq:t}, the density $p(F,W_p)$ can be obtained by multiplying this by $v(F)!/|\mathrm{aut}(F)|$.

It can be checked (and is verified by our script) that, for each $i \in \{13, 70, 75, 77\}$, it holds that $p(F_i,W_{p_i})=u_i$, this giving a lower bound on $\lambda_{F_i}$. 
The following theorem states that this lower bound is sharp and gives a description of almost extremal graphs. Recall that a sequence of growing bipartite graphs $(G_n)_{n\in\I N}$ with almost equal parts is
\emph{$p$-quasirandom} if, for every graph $F$, it holds that $\lim_{n\to\infty} p(F,G_n)= p(F,W_p)$. 
By
adapting the proof of Chung, Graham and Wilson~\cite{ChungGrahamWilson89} (see e.g.~\cite[Lemma 14]{CooleyKangPikhurko22}), it is enough to check this property only for $4$-vertex graphs $F$ (in fact, it is enough to compute only two parameters which are linear functions of the $4$-vertex densities, namely the non-induced homomorphism densities of the edge and the 4-cycle).

\begin{theorem}\label{th:QR} Let $i\in \{13,70,75,77\}$ and define $F:=F_i$ and $p:=p_i$. Then the following statements hold.
\begin{enumerate}[1)]
\item\label{it:QRConstant} We have that  $\lambda_{F}=u_{i}$ and there is a flag algebra proof of the upper bound as in~\eqref{eq:FAMain}  with $N:=6$ such that, for every bipartite type $\tau$ with $2$ or $4$ vertices and at least one edge, it holds that the co-rank of the matrix $X^\tau$ is exactly $2^{c(\tau)-1}$, where $c(\tau)$ denotes the number of (connectivity) components of the graph~$\tau$.
\item\label{it:QRStab} Every sequence of graphs $(G_n)_{n\in \mathbb{N}}$ of growing orders with $p(F,G_n)=\lambda_{F}+o(1)$ as $n\to \infty$ consists, up to $o((v(G_n))^2)$ edits in each $G_n$, of balanced bipartite $p$-quasirandom graphs.
\item\label{it:QRExact} For any $\eps>0$, there is $n_0$ such that any graph $G$ of order $n\ge n_0$ with $p(F,G)=p(F, n)$  admits a partition $V(G)=V_0\cup V_1$ such that each vertex of $G$ has $(p/2\pm \eps)n$ neighbours in the other part and at most $\eps n$ neighbours in its own part; moreover, if $i\in \{70,77\}$ then we can additionally assume that each $V_i$ is an independent set.
\end{enumerate} 
\end{theorem}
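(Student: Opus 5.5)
Item~\ref{it:QRConstant} is a computation. We run \texttt{FlagAlgebraToolbox} with $\N=6$ to get a certificate~\eqref{eq:FAMain} with $u=u_i$; the lower bound is $p(F,W_{p})=u_i$. For $i=13$ the value is irrational, so the certificate is built over $\I Q(\sqrt{17})$, rounding the SDP solution into the field with the prescribed kernels kept exact. For each bipartite type $\tau$ on $2$ or $4$ vertices with an edge, we compute $\operatorname{rank} X^\tau$ exactly and check that the co-rank is $2^{c(\tau)-1}$.

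The co-rank has a natural source. In $W_p$, a copy of $\tau$ with $c(\tau)$ components admits $2^{c(\tau)-1}$ essentially different placements relative to the bipartition $\{V_0,V_1\}$, since each component may be flipped. Each placement yields a limiting rooted-density vector, and these must lie in $\ker X^\tau$. The computed co-rank equal to $2^{c(\tau)-1}$ says the kernel is spanned exactly by these vectors and nothing more.

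For Item~\ref{it:QRStab}, take an almost extremal sequence and pass to a limit graphon $W$, which is $F$-extremal. From the certificate, every $H\in\C F_6^0$ with $c_H>0$ has $p(H,W)=0$. We would first check, by inspecting the slacks, that all $6$-vertex graphs containing an odd cycle (triangles, $C_5$, …) have positive slack. Then $W$ is triangle- and pentagon-free and, after an $o(n^2)$ edit, bipartite almost everywhere with some bipartition $V_0\cup V_1$.

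The standard kernel argument (a version of \cite[Lemma~2.1]{BodnarPikhurko26}) then says that for almost every root embedding of $\tau=K_2$, the rooted $4$-vertex density vector lies in $\ker X^{K_2}$, which is one-dimensional because $c(K_2)=1$. This pins down, for a typical edge $xy$, the joint common-neighbourhood statistics. These force typical degrees to be $p/2$, the part measures to be $1/2$, and the codegree of two vertices on the same side to be $p^2/2$. The $4$-vertex types (e.g.\ $2K_2$, with co-rank $2$) handle consistency between pairs of edges. Together this gives the edge and $C_4$ homomorphism densities of the bipartite graph equal to those of $W_p$, and the Chung--Graham--Wilson criterion \cite{ChungGrahamWilson89,CooleyKangPikhurko22} yields $W=W_p$.

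The main obstacle is this kernel-to-quasirandomness translation. I would first try writing the kernel vectors explicitly in terms of $p$ and the measures of $V_0,V_1$, and reading off linear relations among $4$-vertex densities that express the $C_4$ count. If that fails, I would add the quasirandomness condition as an auxiliary SDP certificate of the form $\sum(\text{$C_4$-discrepancy})^2\le C(u_i-p(F,G))$.

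For Item~\ref{it:QRExact}, take an extremal $G$ and the partition from Item~\ref{it:QRStab}. A vertex-removal argument shows that every vertex $v$ satisfies $p(F,G;v)\ge u_i-o(1)$; otherwise deleting $v$ and cloning a typical vertex would increase $P(F,G)$. Computing the link density of a vertex with $a n$ neighbours in $V_0$ and $bn$ in $V_1$ inside the near-$W_p$ structure, we would verify that the maximum $u_i$ forces $\{a,b\}\approx\{0,p/2\}$. Moving $v$ to the appropriate side gives the degree conditions. For $i\in\{70,77\}$ we would additionally show that an edge inside a part strictly lowers the count by $\Omega(n^4)$, since such an edge creates no induced copies but destroys some, so flipping it off contradicts extremality.
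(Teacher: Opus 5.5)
\textbf{Gap 1: the bipartition in Part~2.} Your Part~2 breaks down exactly where the bipartition is produced, and this is the heart of the proof.

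First, positivity of $c_H$ for every non-bipartite $H\in\mathcal F_6^0$ is not supplied by Part~1. Evaluating the identity on $W_p$ only forces $c_H=0$ when $p(H,W_p)>0$, i.e.\ for bipartite $H$. It imposes nothing on the slacks of the other graphs, so you would be relying on an unverified accident of the certificate.

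More seriously, even granting it, the conclusion does not follow. Take a blowup of $C_7$ with arbitrary part sizes and positive cross densities. Every $6$-set meets at most $6$ parts, and $C_7$ minus a vertex is a path, so every induced $6$-vertex subgraph is bipartite. Yet the graph is $\Omega(n^2)$ edits from bipartite. So ``triangle- and pentagon-free, hence bipartite after $o(n^2)$ edits'' is false. Your subsequent claims about part measures $1/2$ and same-side codegrees $p^2/2$ presuppose a partition you never obtained.

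The step you call the main obstacle is actually routine, and the paper does it first. Averaging the edge-rooted vectors (co-rank $1$) gives all $3$- and $4$-vertex densities of $W_p$, hence $t(K_2)$ and $t(C_4)$. The real work is building the partition, using only the co-rank data:
\begin{enumerate}
\item The $C_4$-type (co-rank $1$) shows that almost every non-edge has either about $p^2n/2$ common neighbours or $o(n)$ of them. Call these classes $B$ and $C$.
\item Counting cherries gives $|B|\le(\frac12+o(1))\binom n2$, so $G\cup C$ has at least $(\frac12-o(1))\binom n2$ edges.
\item $G\cup C$ has $o(n^3)$ triangles. Here the disconnected $4$-vertex types of co-rank $2$ and $4$ are essential. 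Rooted vectors of good embeddings are close to affine combinations of the $W_p$-vectors of all bipartitions of $\tau$. This forces some pair among those spanned by a triangle of $G\cup C$ to be the base of many cherries, contradicting membership in $C$.
\item Erd\H{o}s--Simonovits stability for triangles then gives the balanced bipartition.
\end{enumerate}

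\textbf{Gap 2: edges inside a part in Part~3 for $i=70$.} It is not true that an edge $uv$ inside a part creates no induced copies of $F_{70}$. Vertex $2$ of $F_{70}$ is pendant at $0$. Send $0\mapsto u$ and $2\mapsto v$, put $1,3,4$ into $\Gamma(u)\setminus\Gamma(v)$ on the other side, and put $5$ at a common neighbour of these three on $u$'s side. This gives $\Theta(n^4)$ induced copies whenever $p-q$ is bounded away from $0$, where $q$ is the relative codegree of $u,v$.

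So one must weigh created against destroyed copies as a function of $q\in[2p-1,p]$. The paper does this by computing the difference as a polynomial $d_F(q)$ and verifying $d_F(q)<0$ on the whole interval.

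For $F_{77}$, where every edge lies on a $4$-cycle, your qualitative claim is essentially right. You still need $q\ge 2p-1>0$ to ensure the non-edge $uv$ lies in $\Omega(n^4)$ copies.

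The rest of your outline matches the paper: the certificate computation in Part~1, the cloning bound $P_v(F,G)\ge(\lambda_F-o(1))\binom{n-1}{5}$, and the one-vertex polynomial maximised only at $(0,p/2)$ and $(p/2,0)$.
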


\begin{proof}
The first part of the theorem is verified by the provided flag algebra certificates and our script. 

In order to show Part~\ref{it:QRStab} it is enough to prove that for every $\eps>0$ there are $\eps_0>0$ and $n_0$ such that every graph $G$ on $n\ge n_0$ vertices with $p(F,G)\ge \lambda_{F}-\eps_0$ admits a partition $V(G)=V_0\cup V_1$ such that $|V_j|=(1/2\pm\eps)n$ and $e(G[V_j])\le \eps \binom{n}{2}$ for $j=0,1$ while $|p(H,G)-p(H,W_p)|\le \eps$ for every $4$-vertex graph $H$. 
\hide{Indeed, since the injective non-induced homomorphism densities of the edge $K_2$ and the $4$-cycle $C_4$ are linear functions (with explicit bounded coefficients) of the induced densities of $4$-vertex graphs,  these two values for $G$  are close to those for $W_p$. As we mentioned in \cref{sec:preliminaries}, this implies the desired quasirandomness property.}

So, let $\eps > 0$ be given. Choose positive constants, each being sufficiently small depending on the previous ones:
\begin{equation*}
\eps \gg \eps_5 \gg \eps_4 \gg \eps_3 \gg \eps_2 \gg \eps_1 \gg \eps_0 \gg 1 / n_0.
\end{equation*}
Let $n \geq n_0$, and let $G$ be an arbitrary $n$-vertex graph satisfying $p(F,G)\ge \lambda_{F}-\eps_0$. 

Recall that for a type $\tau$ on $[q]$, an integer $s>q$, and an embedding $f:\tau\hookrightarrow G$, the vector $\V v_{(G,f)}^{\tau,s}$ lists the densities of $s$-vertex $\tau$-flags in $(G,f)$ as defined in \eqref{eq:TauVector}.
We would like to define the analogues of these rooted densities for the graphon~$W_p$. By the symmetry between the parts of $W_p$ and the homogeneity of each part, it is enough to specify an unordered bipartition $\{A,V(\tau)\setminus A\}$ of $\tau$ (which will play the role of $f$), where $A$ is the set of roots in~$V_0$. For brevity, we specify only one part $A$ instead of the pair $\{A,V(\tau)\setminus A\}$.

Formally, for a type $\tau$ on $[q]$, a set $A\subseteq V(\tau)$ and an integer $s>q$, we define the vector $\V v_{(W_p,A)}^{\tau,s}$ indexed by $\C F_s^\tau$ as follows. If at least one of $A$ or $[q]\setminus A$ spans an edge in $\tau$, then $\V v_{(W_p,A)}^{\tau,s}$ is the zero vector. Otherwise, starting with the graph $\tau$ on $[q]$, add a set $S$ of $s-q$ new vertices. Take a uniform random partition $(S_0,S_1)$ of $S$, and make each pair in $(A\times S_1)\cup(([q]\setminus A)\times S_0)\cup(S_0\times S_1)$ an edge with probability $p$, with all choices being mutually independent. For a $\tau$-flag $H\in\C F_{s}^\tau$, the $H$-th entry of $\V v_{(W_p,A)}^{\tau,s}$ is the probability that the resulting random $\tau$-flag (with roots $0,\ldots,q-1$) is isomorphic to~$H$. 
\hide{
As we already mentioned, the vector $\V v_{(W_p,A)}^{\tau,s}$ remains unchanged if $A$ is replaced by $[q]\setminus A$. If $\tau$ is a connected graph when the bipartition is unique, we may omit it completely from our notation, writing just $\V v_{W_p}^{\tau,s}$. 
}

\hide{
While the following claim can be easily verified by the package, in particular to give the reader better understanding of the definition of $\V v_{(W_p,A)}^{\tau,s}$ (and to give some intuition behind the expression $2^{c(\tau)-1}$ in \Cref{thm:onedimension-type}).

\begin{claim}\label{cl:dim}
Let $\tau$ be any bipartite type on $[q]$ with $q=2$ or $q=4$ that is not the empty graph~$\overline{K_4}$.  Let $s:=(q+N)/2=q/2+3$. Then the dimension of the linear subspace spanned by the vectors $\V v_{(W_p,A)}^{\tau,s}$ for $A\subseteq V(\tau)$ is $2^{c(\tau)-1}$.
\end{claim}

\begin{claimproof} 
We will do only the least trivial case when $c(\tau)=3$ and thus, up to an isomorphism, $\tau=(4,\{03\})$. We take four different bipartitions with $A$ being $\{1,2,3\}$, $\{0,1,2\}$, $\{0,1\}$ and $\{0,2\}$. Take the four following $5$-vertex $\tau$-flags $H$ where the neighbourhood of the unlabelled vertex $v$ is $\{1,2,3\}$, $\{0,1,2\}$, $\{0,1\}$ and $\{0,2\}$. Consider the $4\times4$-matrix $M$ with columns (resp.\ rows) indexed by the above four partitions (resp.\ $\tau$-flags) in the stated ordering. All entries of $M$ below the diagonal are zero because $H$ has a 2-edge path via $v$ between some two vertices of $\tau$ from two different parts of the bipartition $\{A,[4]\setminus A\}$. For example, in the first column (for $A=\{1,2,3\}$), each $\tau$-flag $H$ except the first one has $v$ adjacent to $0$ as well as at least one other vertex of~$\tau$. On the other hand, no such impossible paths occur for the diagonal entries of $M$ (when in fact $\Gamma_H(v)=A$) and thus the entry is $p^{\deg_H(v)}(1-p)^{4-\deg_H(v)}>0$. Thus the matrix $M$ is invertible and the linear subspace from the claim has the maximum possible value $4=2^{c(\tau)-1}$, as desired.
\end{claimproof}
}

We can derive the following result about rooted densities in our almost extremal graph~$G$ using the first part of the theorem.

\begin{claim}
\label{cor:convex-comb}
Let $q\in \{2,4\}$ and let $\tau$ be any bipartite type on $[q]$ with at least one edge. Let $s:=(q+N)/2=q/2+3$ and let
\[
\mathcal{L} := \left\{ \B{v}_{(W_p, A)}^{\tau, s} \mid \{A,  V(\tau)\setminus A\} \mbox{ is  a bipartition of the graph $\tau$} \right\}.
\]
Then, for all but at most $\eps_2 \falling{n}{q}$ embeddings $f: \tau \hookrightarrow G$,  there exist real coefficients $\alpha_{\B{v}}=\alpha_{\B{v}}(f)$, for $\B{v}\in \mathcal{L}$, summing to $1$ such that:
\begin{equation}
\label{eq:close_to_convex_combination}
    \left\| \B{v}_{(G,f)}^{\tau, s} - \sum_{\B{v} \in \mathcal{L}} \alpha_{\B{v}} \B{v} \right\|_1 \le \eps_2.
\end{equation}
\end{claim}
\begin{claimproof} 

Let us evaluate the flag algebra identity~\eqref{eq:FAMain} that proves the upper bound $\lambda_{F}\le u_i$ on the graph $G$. Its left-hand side is $(u_i-p(F,G))\binom{n}{N}\le \eps_0\binom{n}{ N}$. Each term on the right-hand side is non-negative, apart from the error term which we can assume to be at least $-\eps_0\binom{n}{ N}$ by choosing $n_0$ sufficiently large. It follows that, for all but at most $\eps_2 \falling{n}{q}$ embeddings $f: \tau \hookrightarrow G$, we have  that
 \begin{equation}\label{eq:LinComb}
\left(\B{v}_{(G,f)}^{\tau, s}\right)^T X^\tau \B{v}_{(G,f)}^{\tau, s}\le \frac{2\eps_0\binom{n
 }{ N}}{\eps_2 \falling{n}{q}\cdot  \binom{n-q}{ s-q}^2}\le \eps_1.
 \end{equation} 
 Thus it is enough to show that~\eqref{eq:LinComb} implies the existence of the desired reals $\alpha_{\V v}$. 
 Since the matrix $X^\tau$ is positive semi-definite and $\eps_1\ll \eps_2$, the inequality in~\eqref{eq:LinComb} implies
 that 
 the vector $\B{v}_{(G,f)}^{\tau, s}$ is within $\eps_2$ in the 1-norm from an element $\V y$ in the kernel of $X^\tau$. We can additionally assume that, like $\V x$, the vector $\V y$ has the sum of its entries equal to $1$.


The same argument as above when we evaluate the flag algebra identity on the graphon $W_p$ (or  large graphs approximating it) shows that, for each used type $\tau$ and each bipartition $\{A,V(\tau)\setminus A\}$ of $\tau$, the vector $\V v_{(W_p,A)}^{\tau,s}$ in fact belongs to the kernel of  $X^\tau$, that is, $\mathcal{L}\subseteq \ker X^\tau$.

By Part~\ref{it:QRConstant} (as verified by our script), the kernel of the matrix $\ker X^\tau$ from the certificate has dimension $2^{c(\tau)-1}$.

Let us show that the dimension of the linear subspace spanned by $\mathcal{L}$ is also $2^{c(\tau)-1}$. This is rather straightforward. We do only the least trivial case when $c(\tau)=3$ and thus, up to an isomorphism, $\tau=(4,\{03\})$. We have four different bipartitions with $A$ being $\{1,2,3\}$, $\{0,1,2\}$, $\{0,1\}$ and $\{0,2\}$. Take the four following $5$-vertex $\tau$-flags $H$ where the neighbourhood of the unlabelled vertex $v$ is $\{1,2,3\}$, $\{0,1,2\}$, $\{0,1\}$ and $\{0,2\}$ respectively. Consider the $4\times4$-matrix $M$ with rows (resp.\ columns) indexed by the above four partitions (resp.\ $\tau$-flags) in the stated ordering, where the $(A,H)$-entry of the matrix is the $H$-th entry of $\V v_{(W_p,A)}^{\tau,s}$. All entries of $M$ above the diagonal are zero because, in each of these cases, $H$ has a 2-edge path via $v$ between some two vertices of $\tau$ from two different parts of the bipartition $\{A,[4]\setminus A\}$. For example, in the first row (for $A=\{1,2,3\}$), each $\tau$-flag $H$ except the first one has the unlabelled vertex $v$ adjacent to $0$ as well as at least one other vertex of~$\tau$. On the other hand, we have $\Gamma_H(v)=A$ on the diagonal and the corresponding entry is $p^{\deg_H(v)}(1-p)^{4-\deg_H(v)}\not=0$. Thus the matrix $M$ is invertible. Furthermore, the dimension of the span of $\mathcal{L}$ cannot be larger than $|\mathcal{L}|\le 2^{c(\tau)-1}=4$, so we have equality here, as desired.

Thus the linear span of $\mathcal{L}$ is equal to $\ker X^\tau\supseteq \mathcal{L}$, since they have the same dimension. 
Returning to the vector $\V y\in \ker X^\tau$ that approximates the vector $\B{v}_{(G,f)}^{\tau, s}$ from~\eqref{eq:LinComb}, we thus can write $\V y$ as $\sum_{\V v\in\mathcal{L}}\alpha_{\V v}\, \V v$ for some reals $\alpha_{\V v}$. Since the entries of each $\V v\in\mathcal{L}$ also sum to $1$, we have that $\sum_{\V v\in\mathcal L}\alpha_{\V v}=1$, as desired.\end{claimproof}

Let $\tau$ be any type with $q\in \{2,4\}$ vertices as in \Cref{cor:convex-comb}. We call an embedding $f:\tau\hookrightarrow G$ \emph{bad} if~\eqref{eq:close_to_convex_combination} fails and \emph{good} otherwise. Thus \Cref{cor:convex-comb} states that there are at most $\eps_2 \falling{n}{q}$ bad embeddings. 
Note that if $\tau$ is connected as a graph then there is only one unordered bipartition 
$\{A,  V(\tau)\setminus A\}$ and thus $\mathcal{L}$ consists of a single vector which we then denote by $\V v_{W_p}^{\tau,s}$. In this case \Cref{cor:convex-comb} states that for all but at most $\eps_2\falling{n}{q}$ embeddings $f:\tau\hookrightarrow G$ we have
 \begin{equation}
 \label{eq:corank1}
\left\|\V v_{(G,f)}^{\tau,s}-\V v_{W_p}^{\tau,s}\right\|_1\le \eps_2.
\end{equation}

Let us first prove that the edge density of $G$ is close to the value in $W_p$.
\begin{claim}\label{cl:edge-density}
We have $|p(K_2,G) - p(K_2,W_p)| \leq 2\eps_2$.
\end{claim}
\begin{claimproof}
The edge density of $G$ is clearly at least $p(F,G)\cdot e(F)/\binom{6}{ 2}>\eps_2$. So we can fix an embedding $g: \gamma \hookrightarrow G$ satisfying~\eqref{eq:corank1}, where $\gamma:=(2,\{01\})$ is the single edge. Now let us select a random pair $uv$ of distinct vertices in $V(G)\setminus g([2])$ uniformly at random, and consider the probability that $uv \in E(G)$. We have
\begin{equation*}
    \pr{uv\in E(G)} = \frac{e(G) \pm \paren{2n - 3}}{\binom{n - 2}{2}} = p(K_2,G) \pm \eps_2.
\end{equation*}
Let $\V \beta$ be the vector indexed by $\C F_4^{\gamma}$ whose $H$-th entry is 1 if the two unlabelled vertices of $H$ are adjacent and 0 otherwise. We have $\pr{uv\in E(G)}$ is  the scalar product $\langle \V \beta, \V v_{(G,g)}^{\gamma,4}\rangle$ which in turn is $\langle \V \beta, \V v_{W_p}^{\gamma,4}\rangle \pm \eps_2$ by~\eqref{eq:corank1}.
\hide{\[
p(K_2,G) = \pr{uv\in E(G)} \pm \eps_2 =\langle \V \beta, \V v_{(G,g)}^{\gamma,4}\rangle \pm \eps_2,
\] 
which is $\langle \V \beta, \V v_{W_p}^{\gamma,4}\rangle \pm 2\eps_2$ by~\eqref{eq:corank1}.}%
Since $\langle \V \beta, \V v_{W_p}^{\gamma,4}\rangle=p(K_2,W_p)$ (by a direct calculation or by applying the above argument to large graphs approximating $W_p$), the claim follows.\end{claimproof}

We also need the following claim. In particular, it implies one of the statements of Part~\ref{it:QRStab}, namely that all 4-vertex densities in $G$ are within $\eps$ of their values in $W_p$.

\begin{claim}\label{cl:four-vertex}
    For every  graph $H$ with $3$ or $4$ vertices, we have $|p(H,G) - p(H,W_p)| \leq 50\eps_2$.
\end{claim}

\begin{claimproof}
First, consider the case when $v(H)=4$ and $H$ contains at least one edge. We will estimate the global density of $H$ by averaging the local densities of flags rooted at edges. Recall that $\gamma=(2,\{01\})$. Let $\V \beta$ be the vector indexed by $\C F^{\gamma}_4$ with its $J$-th coordinate being $\beta_J:=1$ if $J\in\C F_4^\gamma$ is isomorphic to $H$ after the root labels are removed and $\beta_J:=0$ otherwise. 
We now compute, in two different ways, the number of triples $(X, x, y)$, where $G[X]$ is isomorphic to $H$ and $x,y\in X$ are adjacent. We have
\begin{align*}
\binom{n}{4} p(H,G) \cdot 2e(H) &= \sum_{g: \gamma \hookrightarrow G} \sum_{J\in \C F_4^\gamma} \beta_J\cdot \binom{n-2}{2}p(J,(G,g))  \\
&=\binom{n-2}{2}\cdot\sum_{g: \gamma \hookrightarrow G} \left\langle \V \beta,\V v_{(G,g)}^{\gamma,4}\right\rangle\\
&= \binom{n-2}{2}\left( 2\binom{n}{ 2}(p(K_2,W_p)\pm 2\eps_2) \left(\left\langle \V \beta,\V v_{W_p}^{\gamma,4}\right\rangle \pm \eps_2\right)\pm \eps_2 \falling{n}{2}\right)\\
&=2e(H) \binom{n}{4} \paren{p(H,W_p) \pm 5\eps_2},
\end{align*}
 where the third equality uses~\eqref{eq:corank1} and \Cref{cl:edge-density}.
 Thus we proved that $p(H,G) = p(H,W_p) \pm 5\eps_2$ for every $4$-vertex graph $H$ with at least one edge.

Since the sum of densities of $4$-vertex graphs is $1$, we have that the densities of the edgeless 4-vertex graph $\O K_4$ in $G$ and $W_p$ differ by at most $(|\C F_4^0|-1)\cdot 5\eps_2=50 \eps_2$. Furthermore, the density of a $3$-vertex graph $H$ is an affine function of $4$-vertex graph densities excluding $\O K_4$ with each coefficient having absolute value at most 1, so its values on $G$ and $W_p$ also differ by at most~$50\eps_2$.
\end{claimproof}

If we have a cherry with edges $ab$ and $ac$ then we say that $bc$ is its \emph{base} or that the cherry \emph{is based} on $bc$.
Note that all pairs of non-adjacent vertices $u_0u_1$ in $W_p$ are of two types: if the vertices are in two different parts then there are no cherries  based on them, otherwise the density of cherries based on $u_0u_1$ is approximately $p^2/2$. Let us show that a similar classification of non-adjacent pairs is possible in~$G$. Define $B$ (resp.\ $C$) to consist of those $u_0u_1\in E(\O G)$ such that the number of cherries based on $u_0u_1$ is $p^2n/2\pm \eps_4n$ (resp.\ at most $\eps_4n$). 

\begin{claim}\label{cl:cherry-density}
    $\left|E(\O G)\setminus (B\cup C)\right|\le \eps_4\binom{n}{ 2}.$
\end{claim}
\begin{claimproof} Let $\sigma:=(4,\{02,21,13,30\})$ be the 4-cycle type, where it is convenient to have $0$ and $1$ non-adjacent. Call an embedding $f:\O\gamma\hookrightarrow G$ (or an ordered pair $(x_0,x_1)$ of non-adjacent vertices of $G$) \emph{$\sigma$-bad} if it extends to at least  $\eps_3 \falling{n}{2}$ bad embeddings $\sigma\hookrightarrow G$. Since we have in total at most $\eps_2\falling{n}{4}$ bad embeddings of $\sigma$ by \Cref{cor:convex-comb}, the number of $\sigma$-bad pairs $(x_0,x_1)$ is at most $\eps_2\falling{n}{4}/(\eps_3 \falling{n}{2})<(\eps_4/2)\falling{n}{2}$. 

Also, call a vertex of $G$ \emph{$K_3$-bad} if it is in at least $\eps_3 \binom{n}{ 2}$ triangles. By \Cref{cl:four-vertex}, the number of $K_3$-bad vertices is at most $3\, P(K_3,G)/ (\eps_3 \binom{n}{ 2})\le \eps_3n$. Consequently, the number embeddings $\O\gamma\hookrightarrow G$ that use at least one $K_3$-bad vertex is at most $2n$ times this, which is at most, say, $(\eps_4/2) \falling{n}{2}$. 

Thus the number of embeddings $\O\gamma\hookrightarrow G$ which are $\sigma$-bad or contain at least one $K_3$-bad vertex is at most $\eps_4\falling{n}{2}$. Hence it is enough to show that every remaining pair $x_0x_1$ of non-adjacent distinct vertices of $G$ is in $B\cup C$. Take any such $x_0x_1$. 
The number of cherries based on $x_0x_1$ is the size of the common neighbourhood $\Gamma(x_0x_1):=\Gamma_G(x_0)\cap\Gamma_G(x_1)$. Suppose that $|\Gamma(x_0x_1)|> \eps_4n$, as otherwise $x_0x_1\in C$ and we are done. Since e.g.~$x_0$ is not $K_3$-bad, the set $\Gamma(x_0x_1)\subseteq \Gamma(x_0)$ spans at most $\eps_3 \binom{n}{ 2}$ edges. Every choice of non-adjacent distinct $x_2,x_3\in \Gamma(x_0x_1)$ gives an embedding of the 4-cycle $\sigma$. At most $\eps_3 \falling{n}{2}$ of these embeddings can be bad (since $x_0x_1$ is not $\sigma$-bad). Since $\eps_4\gg \eps_3$, we can find $(x_2,x_3)$ giving a good embedding $f$ of $\sigma$. By definition, this means that~\eqref{eq:corank1} holds, that is,  
 \[
 \left\|\V v_{(G,f)}^{\sigma,5}-\V v_{W_p}^{\sigma,5}\right\|_1\le \eps_2.
 \]
 In particular, the densities of cherries on $01\in E(\O \sigma)$ are $\eps_2$-close in these two cases since we can compute them by taking the scalar product with the vector $\V \beta$ indexed by $\C F^{\sigma}_5$ whose $J$-th entry is $1$ if and only if  the (unique) unlabelled vertex of $J$ is adjacent to both $0$ and~$1$. Thus the density of cherries on $x_0x_1$ in $G$ is within $2\eps_2$ from $p^2/2$, the analogous value we compute from~$\V v_{W_p}^{\tau,5}$. We conclude that $x_0x_1\in B$, finishing the proof.
\end{claimproof}

By Claim~\ref{cl:four-vertex}, the density of the cherry $(3,\{01,02\})$ in $G$ is within $50\eps_2$ of $3p^2/4$, its density in $W_p$. On the other hand, the total number of cherries can be lower bounded by the sum of the number of cherries based on $u_0u_1$ for all $u_0u_1\in B$, with each summand being at least $(p^2/2-\eps_4)n$. Thus
\begin{equation}\label{eq:B} |B|\le \frac{(3p^2/4+50\eps_2)\binom{n}{3}}{(p^2/2-\eps_4)n}\le\left(\frac{1}{2}+\eps_5\right)\binom{n}{ 2}.
\end{equation}

In order to finish the proof of Part~\ref{it:QRStab} it is enough to show that, say, $p(K_3,H)\le 4\eps_5$, where
\[
H := (V(G), E(G) \cup C).
\]
Indeed, by \Cref{cl:cherry-density} and~\eqref{eq:B}, the number of edges in $H$ is at least $(\frac12-2\eps_5)\binom{n}{ 2}$. The Erd\H os--Simonovits Stability Theorem~\cite{Erdos67a,Simonovits68} for the Tur\'an problem for $K_3$ combined with the standard supersaturation results (\cite{ErdosSimonovits83}) or the Triangle Removal Lemma~(\cite{RuzsaSzemeredi78}) implies that there is a partition $V_0\cup V_1$ such that, for $j=0,1$, $|V_j|=(1/2\pm\eps)n$ and $H$ (and thus $G$) has at most $\eps \binom n2$ edges inside $V_j$. (The remaining portion of Part~\ref{it:QRStab}, namely that for every $4$-vertex graph its densities in $G$ and $W_p$ differ by at most  $\eps$, follows from \Cref{cl:four-vertex}.)

Thus it remains to bound the number of triangles in $H$. These are of four different kinds, depending on how many edges  of a triangle are in $G$. So it is enough to bound the number of triangles in $H$ of each kind by $\eps_5 \binom n3$. \Cref{cl:four-vertex} takes care of the triangles with all three edges from $G$.

The number of triangles in $H$ with exactly two edges from $G$ is at most $\binom n2\cdot \eps_4n\le \eps_5\binom n3$, since  every pair in $C\subseteq E(\O G)$ is in at most $\eps_4 n$ cherries by definition.

We will also need the following auxiliary claim in the two remaining cases.

\begin{claim}
\label{prop:find_base_of_many_cherries}
Let $\tau$ be a bipartite $4$-vertex type with at least one edge and let  $f$ be
a good embedding of $\tau$ into $G$. Let
$S \subseteq \binom{V(\tau)}{2}$ be a set of non-adjacent pairs in $\tau$ such that
for every bipartition of $\tau$, there is a pair $ab \in S$ with both
$a$ and $b$ in the same part. Then, there is a pair in $f(S)$ that is
not in $C$.
\end{claim}
\begin{claimproof}
Let $x_j:=f(j)$ for $j\in [4]$ and fix scalars $\alpha_{\V v}$ that sum up to 1 and satisfy~\eqref{eq:close_to_convex_combination}. Choose a vertex $x_4$ uniformly at random from $V(G) \setminus \{x_0, x_1, x_2, x_3\}$. Notice that for every pair $ab \in S$, the probability that the random vertex $x_4$ is adjacent to both $x_a$ and $x_b$ can be written as the scalar product $\langle \V \beta^{ab},
\B{v}^{\tau, 5}_{(G,f)} \rangle$, where the $J$-th coordinate of $\V \beta^{ab}$ is $1$ if $\{a,4\},\{b,4\}\in E(J)$ and 0 otherwise (and each $J\in\C F_{\tau}^5$ is assumed to have $V(J)=[5]$ with $4$ being the unique unlabelled vertex). Letting
$\V\beta := \sum_{ab \in S} \V\beta^{ab}$, we
have
\begin{align*}
\sum_{ab \in S} \pr{x_4x_a, x_4x_b \in E(G)} &= \left\langle \V{\beta},
\B{v}^{\tau, 5}_{(G,f)} \right\rangle \\
&=
\left\langle
\V{\beta},
\sum_{\B{v}
\in
                                                                \mathcal{L}}
                                                                \alpha_{\B{v}}
                                                                \B{v}
                                                                \right\rangle
                                                                + \left\langle \V{\beta},
                                                                \B{v}^{\tau,
                                                                5}_{(G,f)}
                                                                - \sum_{\B{v}
                                                                \in
                                                                \mathcal{L}}
                                                                \alpha_{\B{v}}
                                                                \B{v}
                                                                \right\rangle \\
                                                              &\geq \sum_{\B{v} \in \mathcal{L}} \alpha_{\B{v}} \langle
                                                                \V{\beta}, \B{v}
                                                                \rangle - \eps_2
                                                                \|\V{\beta}\|_\infty 
                                                              \ \geq\ 
                                                                \frac{p^2}{2}
                                                                -
                                                                \eps_2,
\end{align*}
 where the last inequality uses our assumption on~$S$.
This implies that there
are at least $6\eps_4 n$ vertices
$x_4 \in V(G) \setminus \set{x_0, x_1, x_2, x_3}$ for which there exists
a pair $ab \in S$ such that $x_4$ is adjacent to both $x_a$ and
$x_b$ in $G$. Hence, at least one pair in $f(S)$ is the base of more than $\eps_4 n$ cherries in $G$, as desired.
\end{claimproof}

Take any triangle $x_0x_1x_2$ in $H$ with exactly one edge from $G$,
say $x_0x_1\in E(G)$ and any vertex $x_3\in V(G)\setminus\{x_0,x_1,x_2\}$. If $G[\{x_0, x_1, x_2, x_3\}]$ is bipartite then let $\tau$ be the type on $[4]$ such that the map $f:[4]\to V(G)$ that sends $i$ to $x_i$ is an embedding and note that $\tau$ has at least one edge (namely $01\in E(\tau)$) and $f$ cannot be good. Indeed, otherwise, by  \Cref{prop:find_base_of_many_cherries} applied to the set $S=\{02, 12\}$, at least one of
$x_0x_2$ and $x_1x_2$ is not in $C$, a contradiction. In other words, for every choice of $x_3$, we have that the induced subgraph $G[\{x_0,x_1,x_2,x_3\}]$ is non-bipartite or that the map $f$ is a bad embedding. In the former case, the number of choices of $(x_0,x_2,x_3,x_4)$ is at most $|\C F_4^0|\cdot 50\eps_2 \falling{n}{4}$ by \Cref{cl:four-vertex}. In the latter case, the number of choices is at most $\eps_2\falling{n}{4}$ by~\Cref{cor:convex-comb}. Thus we have at most $\eps_3 \falling{n}{4}$ choices of $(x_0,x_2,x_3,x_4)$ in total. This, when divided by $2n$, gives an upper bound on the number of triangles in $H$ with one edge from $G$, as desired.

Finally, it remains to prove that the number of  triangles in $H$ with no edges from $G$ is at most $\eps_5\binom{n}{3}$. Let us first start with the following result about
 \[
 L:=\{v\in V(G): \deg_{G}(v)\le \eps_5n\}.
 \]

\begin{claim}
  \label{prop:few_vtx_of_low_deg}
  It holds that $|L|\le \eps_4n$.\end{claim}
\begin{claimproof}
   Let $J:=(3,\{01\})$ be the $3$-vertex graph with exactly one edge. One can easily check that $J$ is induced subgraph of $F$ so $G$ contains many copies of $J$. Suppose to the contrary that $|L|>\eps_4n$. Then a familiar counting argument shows that there is a bipartite $4$-vertex type $\tau\supseteq J$ and a good embedding $f:\tau\hookrightarrow G$ with $f(3)\in L$. Let us now choose the fifth vertex $w$ uniformly at random in $V(G)\setminus f([4])$. Then
   \begin{align*}
       \mathbb{P}[\{w,f(3)\}\in E(G)]&=\left\langle \V{\beta},\B{v}^{\tau, 5}_{(G,f)} \right\rangle\\
       &= \left\langle \V{\beta},\sum_{\B{v}\in\mathcal{L}}\alpha_{\B{v}}\B{v} \right\rangle + \left\langle \V{\beta}, \B{v}^{\tau, 5}_{(G,f)} - \sum_{\B{v}\in \mathcal{L}} \alpha_{\B{v}}\B{v}\right\rangle\\
       &\ge \sum_{\B{v}\in \mathcal{L}}\alpha_{\B{v}}\langle\V{\beta},\B{v}\rangle-\eps_2\|\V{\beta}\|_\infty\ \ge\ \frac{p}{2}-\eps_2,
   \end{align*}
where $J'$-th coordinate of $\V{\beta}$ for $J'\in\C F_{\tau}^5$ is 1 if the unlabelled vertex is adjacent to Root $3$ and 0 otherwise. Thus $f(3)$ has too large degree to be in $L$, a contradiction.
\end{claimproof}

Suppose for contradiction
that the number of  triangles in $C$ is at least $\eps_5
\binom{n}{3}$. By \Cref{prop:few_vtx_of_low_deg}, at most $|L|\cdot \binom{n}{2} < \frac12 \eps_5
\binom{n}{3}$ of them intersect the low-degree set~$L$. If we take one of the remaining triangles, say on $\{x_0,x_1,x_2\}$ and $x_3\in\Gamma_G(x_0)\setminus\{x_1,x_2\}$ then we have at least $\frac12 \eps_5
\falling{n}{3}\cdot (\eps_5n-2)-|\C F_4^0|\cdot 50\eps_2 \falling{n}{4}>\eps_4\falling{n}{4}$ choices of $(x_0,x_1,x_2,x_3)$, each giving an embedding $f:\tau\to G$ of some 4-vertex bipartite type $\tau$ (which has at least one edge by $x_0x_3\in E(G)$). Thus some such type $\tau$ appears for strictly more than $\eps_2 \falling{n}{4}$ embeddings and, by \Cref{cor:convex-comb}, at least one resulting embedding~$f$ is good. Fix one such $\tau$ and $f$. We apply
\Cref{prop:find_base_of_many_cherries} with the set
$S:=\set{01, 02, 12}$ to conclude that at least one pair in $\{f(0),f(1),f(2)\}$ is not in $C$, a contradiction. This concludes
the proof of this case and thus of Part~\ref{it:QRStab}.

Let us turn to Part~\ref{it:QRExact}.
Since its proof is an easy adaptation of the proof of~\cite[Theorem~4.2]{BodnarPikhurko25}, we will be rather brief and use the asymptotic notation $o(1)$ for negligible terms as $n\to\infty$.
Consider an arbitrary graph $G$ with $n\to\infty$ vertices and
$p(F,G)=p(F,n)$.

By Part~\ref{it:QRStab}, there is a partition $V(G)=U_0\cup U_1$ such that each part has $(1/2+o(1))n$ vertices and spans $o(n^2)$ edges and the bipartite graph $G[U_0,U_1]$ is $p$-quasirandom. Let $V_0\cup V_1$ be a partition of $V(G)$ that maximises the number of crossing edges. This number is at least $|E(G[U_0,U_1])|=p(n/2)^2+o(n^2)$ and at most $|E(G)|=p(n/2)^2+o(n^2)$. Thus almost all edges of $G$ lie between $V_0$ and $V_1$. By the quasirandomness, we have up to swapping the parts that $|V_i\triangle U_i|=o(n)$ for $i\in [2]$. This means that all conclusions of Part~\ref{it:QRStab}  also hold for the partition $V_0\cup V_1$ (with the error terms being worse but still $o(1)$).

For a vertex $v\in V(G)$, let $P_v(F,G)$ be the number of induced
copies of $F$ in $G$ that contain $v$. Since
\[
  \frac1n\sum_{v\in V(G)} P_v(F,G)
  =p(F,G)\binom{n-1}{5}
  =(\lambda_F+o(1))\binom{n-1}{5},
\]
the standard cloning argument gives
\[
  P_v(F,G)\ge (\lambda_F-o(1))\binom{n-1}{5}
  \qquad\text{for every }v\in V(G).
\]
Indeed, if some vertex had contribution smaller than this, then replacing it by a clone of a vertex of average
contribution would strictly increase the number of induced copies of $F$ (even if we upper bound the number of destroyed copies that use both vertices by $\binom{n-1}{4}$).

We next consider the contribution of a new
vertex $u$ when added to a $p$-quasirandom biparite graph with parts $U_0$ and $U_1$, each of size $n/2$. Suppose that  $u$ has $x_in$
neighbours in $U_i$ for $i=0,1$.
Then the number of induced copies of $F$ containing $u$ is
\[
  \bigl(q_F(x_0,x_1)+o(1)\bigr)\binom n5,
\]
 for some polynomial $q_F$ of degree at most~$5$. Namely, $q_F$ can be obtained by summing, over the choice of the
vertex of $F$ mapped to $u$ and over all ordered bipartitions of the
remaining five vertices, the corresponding monomials in
$x_0,x_1,1/2-x_0,1/2-x_1,p$ and $1-p$.

The accompanying script computes this polynomial and verifies over exact arithmetic, that for each $i\in\{13,70,75,77\}$,
\[
  q_F(x_0,x_1)\le \lambda_F
  \qquad\text{for all }(x_0,x_1)\in[0,1/2]^2,
\]
with equality only at
\[
  (x_0,x_1)=(0,p/2)\quad\text{and}\quad (x_0,x_1)=(p/2,0).
\] 

Now take any $v\in V(G)$ and put
\[
  x_i(v):=\frac{|\Gamma_G(v)\cap V_i|}{n},\,\quad\mbox{for $i=0,1$}.
\]
The Quasirandom Counting Lemma, applied to various Boolean combinations of
$V_0$, $V_1$, $\Gamma_G(v)$ and $\Gamma_{\O G}(v)$, gives
\[
  P_v(F,G) = \bigl(q_F(x_0(v),x_1(v))+o(1)\bigr)\binom n5.
\]
The above statements imply by the continuity of $q_F$  that
$(x_0(v),x_1(v))$ is $o(1)$-close to
$(0,p/2)$ or $(p/2,0)$.
Since $V_0\cup V_1$ was chosen to maximise the number of crossing
edges, moving any single vertex to the other side cannot increase the
cut. Thus every $v\in V_i$ has at least as many neighbours in
$V_{1-i}$ as in $V_i$ giving that
\begin{equation}\label{eq:QrDegrees}
  |\Gamma_G(v)\cap V_i|=o(n) \qquad \text{and}\qquad|\Gamma_G(v)\cap V_{1-i}|=(p/2+o(1))n
\end{equation}
for every $i \in [2]$ and $v\in V_i$, proving one of the conclusions of Part~\ref{it:QRExact}

It remains to prove, for $i\in\{70,77\}$, that the parts can be chosen
to be independent. Suppose to the contrary that
$uv\in E(G[V_0])$ (a similar argument works for $uv \in E(G[V_1])$). Define their relative common neighbourhood size to be
\[
  q := \frac{|\Gamma_G(u)\cap\Gamma_G(v)\cap V_1|}{n/2}.
\]
Both $u$ and $v$ have
$(p/2+o(1))n$ neighbours in $V_1$ by~\eqref{eq:QrDegrees}, and therefore
\[
  2p-1-o(1)\le q \le p+o(1).
\]
Partition $V_1$ into the four sets
\[
\begin{array}{ll}
  A:=\Gamma_G(u)\cap\Gamma_G(v)\cap V_1, &
  B:=(\Gamma_G(u)\setminus\Gamma_G(v))\cap V_1,\\ 
  C:=(\Gamma_G(v)\setminus\Gamma_G(u))\cap V_1, &
  D:=V_1\setminus(\Gamma_G(u)\cup\Gamma_G(v)),
\end{array}
\]
whose sizes are therefore
\[
  |A|=\frac{q n}{2}+o(n),\quad
  |B|=|C|=\frac{(p-q)n}{2}+o(n),\quad
  |D|=\frac{(1-2p+q)n}{2}+o(n).
\]
Let $G'$ be obtained from $G$ by deleting the edge $uv$. Only copies of
$F$ using both $u$ and $v$ can change. The number of such copies that
also use another edge inside $V_0$ or $V_1$ is $o(n^4)$. Hence the main term of
$P(F,G)-P(F,G')$ is obtained from the contribution of the parts
\[
  V_0\setminus\{u,v\},\quad A,\quad B,\quad C,\quad D,
\]
where between $V_0\setminus\{u,v\}$ and $A\cup B\cup C\cup D$
we have a quasirandom structure with edge probability $p$, the vertex $u$ is complete to $A\cup B$, the vertex
$v$ is complete to $A\cup C$, and the only difference between the two
constructions is whether the pair $uv$ is an edge.

The accompanying script computes asymptotically this difference, as a function of $q$. More precisely, it obtains a polynomial $d_F(q)$, such that
\[
  P(F,G)-P(F,G') =
  \bigl(d_F(q)+o(1)\bigr) \binom n4.
\]

For both $i \in \{70, 77\}$, we obtain that $d_{F_i}(q) < 0$ for every $q$ in the interval $[2p-1, p]$. Thus  it holds for large enough $n$ that
\[
  P(F_i,G)-P(F_i,G')<0
\]
and deleting the edge $uv$
strictly increases the number of induced copies of $F$. This contradicts
the optimality of $G$, giving that $G[V_0]$ is empty.
\hide{
The average number of embeddings of $F$ into $G$ that use a uniformly random vertex is $(\lambda_F+o(1))\binom{n-1}{ 5}$.
Since every two distinct vertices of $G$ are simultaneously in at most $O(n^4)=o(n^5)$ embeddings of $F$ into $G$ and we cannot increase $p(F,G)$ by replacing a vertex by a clone of another vertex, we conclude that every vertex of $G$ is in least $(\gamma_F+o(1))\binom{n-1}{ 5}$ copies of $F$.

Suppose that we add a new vertex $u$ to $G$ such that $u$ has $x_j n$ neighbours in $V_j$  for $j=0,1$.
 \hide{By the $p$-quasirandomness of $G$ proved in Part~\ref{it:QRStab}, the number of embeddings $f$ of $F$ containing $u$ is
\begin{equation}
    \label{eq:NewVertex}
\sum_{j\in V(F)} \sum_{(A_0,A_1)} p^{|F[A_1,A_2]|}(1-p)^{|\O F[A_1,A_2]|}\prod_{h=0}^1 (x_hn)^{|A_h\cap \Gamma(j)|} ((1/2-x_h)n)^{|A_h\setminus \Gamma(j)|}+o(n^5),
\end{equation}
 where the second sum is over all ordered bipartitions $(A_0,A_1)$ of $F-j$. Here $j$ is the vertex of $F$ which is mapped to $u$ and $A_h$ is the set of vertices of $F$ that are mapped into $V_h$ (with embeddings where $A_h$ is not independent incorporated into the error term). If we divide the expression in~\eqref{eq:NewVertex} by $5!\,n^5$ and ignore the error term, we get a polynomial $q(x_0,x_1)$.
 }%
 By the $p$-quasirandomness of $G$ proved in Part~\ref{it:QRStab}, one can write an explicit polynomial $q(x_0,x_1)$ such that the number of copies of $F$ containing the vertex $u$ is $(q(x_0,x_1)+o(1))\binom{n}{ 5}$.
 It follows from Part~\ref{it:QRExact} that the maximum of $q$ over $[0,1/2]^2$ is $\lambda_F$ and it is attained at $(0,p/2)$ and $(p/2,0)$. (For example, if the maximum were larger than $\lambda_F$ for some $(x_0,x_1)$ then adding $\Omega(n)$ copies of $u$ we could push $F$-density above $\lambda_F$, a contradiction.)
 
Let us say that the $F$-problem is \emph{strict} if $(0,p/2)$ and $(p/2,0)$ are the only elements of $[0,1/2]^2$ on which $q$ attains the (maximum) value $\lambda_F$. Our script computes the polynomial $q$ and verifies that it satisfies this property for every $i\in\{13,70,75,77\}$. 

The polynomial $q$ also approximates well the number of copies of $F$ per any vertex $v$ of $G$. Thus strictness  combined with out observation that $v$ is in $(\lambda_F+o(1))\binom{n}{ 5}$ copies of $F$ implies that $v$ has $o(n)$ neighbours in one part and $\paren{p/2 + o(1)}n$ neighbours in the other part. Furthermore, by the choice of the partition $V_0\cup V_1$, each vertex $v$ has $(p/2+o(1))n$ neighbours across, this proving one of the conclusions of Part~\ref{it:QRExact}.

It remains to show that each part $V_j$ spans no edges. Suppose on the contrary that $uv$ is an edge $G[V_j]$.  Let
\begin{equation*}
    y \coloneqq \frac{|V_{1 - j} \cap \Gamma_G(u) \cap \Gamma_G(w)|}{n / 2}
\end{equation*}
be (approximately) the proportion of vertices of $V_{1 - j}$ that are adjacent to both $u$ and $w$. Given that the bipartite graph $G[V_0, V_1]$ is almost $p$-regular, we have $\max\{0,2p - 1\} \leq y + o(1) \leq p$.

Let us now consider the case where $i={70}$, and recall that $E(F_{70}) = \set{01, 12, 23, 30, 04, 42, 05}$.  
Here, we let $G'$ be obtained from $G$ by removing the edge $uv$. By the extremality of $G$, we have that $P(F,G)-P(F,G')$ is non-negative. 
This difference counts only copies of $F$ that use both $u$ and $v$. 
There are only $o(n^4)$ embeddings of $F$ into $G$ that use an edge inside a part different from the pair $uw$, so we will assume that the only edge inside parts is $uw$. Every embedding has to map $\set{0, 5}$ to $\set{u, w}$, having $2$ choices for this. Also, the images of $1, 3, 4 \in V(F)$ have to belong to the set of non-neighbours of $f(5)$ intersected with the set of neighbours of $f(0)$, whose size is $\paren{p - y} n / 2 + o(n)$. Finally, the image of $2$ has to be adjacent to $f(1)$, $f(3)$ and $f(4)$, a set that by quasirandomness has size $p^3 n / 2 + o(n)$. Therefore, the total number of the embeddings considered is at most $2 \cdot \paren{\paren{p - y} n / 2}^3 \cdot p^3 \cdot n / 2 + o(n^4)$. We now lower bound the number of embeddings of $F$ into $G'$ that use the non-edge $uw$. It is enough to count those embeddings where no edge is mapped inside the same part, and we only consider embeddings that map the non-edge $\set{3, 4}$ to $\set{u, w}$. There are $2$ choices for the bijection between $\set{3, 4}$ and $\set{u, w}$. The vertices $f(0)$ and $f(2)$ must both belong to the common neighbourhood of $f(3)$ and $f(4)$, a set of size $y n / 2 + o(n)$. Vertex $f(5)$ must be adjacent to $f(0)$ but not to $f(2)$, so it can be one of $p (1 - p) n / 2 + o(n)$ possible vertices. Finally, vertex $f(1)$ must be adjacent to both $f(2)$ and $f(0)$, and there are $p^2 n / 2 + o(n)$ possible choices. Therefore, the number of embeddings that use the pair $uw$ is at least $2\cdot \paren{y n / 2}^2 \cdot p (1 - p) n / 2 \cdot p^2 n / 2$. This lower bound suffices for obtaining a contradiction later. Consider the difference between the number of embeddings of $F$ into $G'$ and $G$, and normalise by $n^3$, then the main term is at least
\begin{equation*}
    \frac{p^3}{8} \paren{y^2 (1 - p) - (p - y)^3} \eqqcolon r(y).
\end{equation*}
This polynomial is increasing in \jared{something} $[2p - 1, p]$, so so its minimum value in this range is $r(2p - 1) =  > 0$ \jared{something}, contradicting the maximality of $G$ for the case where $F = F_{70}$.

Finally, let $i=77$. 
Notice that every edge $ab$ of $F$ lies on some cycle. Thus, given that we consider embeddings of the bipartite graph $F$ into $G$ that use the edge $uw$ in $G[V_i]$ and that the maximum degree inside each part is $o(n)$, there are only $o(n^4)$ embeddings of $F$ into $G$ that use an edge inside a part different from the pair $uw$. Then, the number of embeddings under consideration is at most $o(n^4)$. On the other hand, if $G'$ is obtained from $G$ by removing the edge $uv$ then $G'$ must also have $o(n^4)$ copies of $F$ containing both $u$ and $w$, which is easily seen to imply that $y=p^2+o(1)$.
\hide{Replacing the neighbourhoods of $u$ and $v$ in $V_{1-j}$ by typical subsets of size $pn/2$ indeed creates $\Omega(n^4)$ copies via $uw$ but this affects $\Omega(n)$ edges (which affects $\Omega(n^5)$ copies of $F$), so it is not a contradiction yet. Probably, the best messy strategy is to get a contradiction to $y=p^2+o(1)$ is to do smaller changes to $G$ (in adding to flipping the pair $uw$), like removing one edge connecting $w$ to a (random) neighbour of $u$ and adding one  edge connecting $w$ to a (random) non-neighbour of $u$\ldots
}\end{proof}

Our proof (as is) fails to prove that $V_i$ is independent for $i\in \{13,75\}$. The reason is that if the adjacent vertices $u$ and $v$ in the same part $V_j$ have the same neighbourhood in $G$ then flipping the pair $uv$ does not create any new copy of $F_i$ because this graph is twin-free. It is possible that this issue can perhaps be overcome by doing a different type of transformation (e.g.\ increasing the symmetric difference of the neighbourhoods of $u$ and $v$); however, then we need to control lower order terms for which more precise bounds on the distribution of $F$-subgraphs containing of $u$ and $v$ may be needed, etc. We decided not to pursue this direction any further. 

For $i\in\{70,77\}$, Part~\ref{it:QRExact} of Theorem~\ref{th:QR} reduces the $F$-inducibility
problem for large $n$ to its bipartite version (modulo the issue of finding optimal part sizes).
Resolving this bipartite problem exactly seems challenging and we do not pursue this direction here.}
\end{proof}

Let us remark that Part~\ref{it:QRStab} of \Cref{th:QR} implies that $W_{p_i}$ is the unique (up to weak isomorphism) $F$-extremal graphon. 

The last argument uses the strict negativity of the
polynomial $d_F(q)$ on the whole allowed interval for $q$. For
$i\in\{13,75\}$ the analogous polynomial has a root at the endpoint
$q=p$ (this calculation is included in our script as well), corresponding to the case when the two vertices inside the same part have identical neighbourhoods. Note that the edge-deletion argument does not give an immediate contradiction in these cases, as both $F_{13}$ and $F_{75}$ are twin-free, hence the contribution is $0$ with and without the edge between the two vertices. While a contradiction may be possible by considering different transformations (e.g.\ increasing the symmetric difference between $\Gamma(u)$ and $\Gamma(v)$), this would probably require more careful analysis of lower order terms so we did not pursue this direction.

\subsection{\texorpdfstring{Special case of $F_{65}$}{Special case of F65}}

The case of $F_{65}$, where 
\begin{equation}\label{eq:65}
 F_{65}:=(6,\{02, 04, 05, 12, 14, 15, 45\}),
 \end{equation}
 is somewhat special in the sense that, although we can determine the inducibility constant and it comes from blowups of a (random-free) pattern $B$, we cannot establish perfect stability by applying Theorem~\ref{th:PST7.1} since no graph $\tau$ with at most $7$ vertices can satisfy Items~\ref{it:PST7.12b} and~\ref{it:PST7.12c} of Theorem~\ref{th:PST7.1}.
 Specifically, we have that
 \begin{equation}\label{eq:65B}
 B:=C_6^*\sqcup C_6^*,\quad \mbox{where $C_6^*:=(6,\{00,11,22,33,44,55,01,12,23,34,45,50,03,14,25\})$}.
 \end{equation}
Thus $B$ consists of two disjoint copies of $C_6^*$ which in turn is obtained from the $6$-cycle by adding loops at all six vertices and adding all three main diagonals as edges. (Equivalently, $C_6^*$ is the pattern complement of the graph $K_3\sqcup K_3$.) As it is routine to see, no three roots can identify all six parts of $C_6^*$-blowups and, consequently, no seven roots can identify all 12 parts of $B$-blowups. 

As we will show, the maximising vector is unique to up an automorphism of $B$ (in fact, up to swapping the two copies of $C_6^*$): namely, we take a uniform blowup of each copy of $C_6^*$ but these blowups occupy $\alpha n$ and $(1-\alpha)n$ vertices respectively, where 
\begin{equation}
\label{eq:alpha}
\mbox{$\alpha :=0.16776\ldots$ is a root of
$\alpha^4 - 2\alpha^3 + \frac73\, \alpha^2 - \frac43\, \alpha + \frac16$.}
\end{equation}
 Thus, assuming that the first copy of $C_6^*$ in $B$ has vertex set $[6]$, the maximiser is
 \begin{equation}\label{eq:x}
     \V \alpha:=\left(\frac{\alpha}{6},\frac{\alpha}{6},\frac{\alpha}{6},\frac{\alpha}{6},\frac{\alpha}{6},\frac{\alpha}{6},
     \frac{1-\alpha}{6},\frac{1-\alpha}{6},\frac{1-\alpha}{6},\frac{1-\alpha}{6},\frac{1-\alpha}{6},\frac{1-\alpha}{6}\right).     
 \end{equation}

Nonetheless, we can establish perfect $B$-stability by using some additional properties from the obtained flag algebra certificate. In this task, we first establish the following (weaker) property. Following~\cite{PikhurkoSliacanTyros19}, we say that an $F$-inducibility problem is \emph{robustly $B$-stable} if there is a constant $C>0$ such that every graph $G$ with $n\ge C$ vertices 
 \begin{equation}\label{eq:RobustStab}
 \dedit(G,\blow{B}{})\le C\max\set{n,(\lambda_{F}-p(F,G))n^2}.
 \end{equation}
  Note that we can replace $\lambda_{F}$  by $p(F,n)$ in this definition since $p(F,n)=\lambda_{F}+O(1/n)$ as $n\to\infty$ by
\cite[Lemma~2.2]{PikhurkoSliacanTyros19}.

\begin{theorem}\label{thm:graph_65}
Let $F_{65}$, $B$, $\alpha$ and $\V \alpha$ be defined by~\eqref{eq:65}--\eqref{eq:x}. Let $F:=F_{65}$. Then the following statements hold.
 \begin{enumerate}[1)]
\item\label{it:65a} We have that $\lambda_{F}=\frac{50}{27} \alpha^{2} - \frac{50}{27} \alpha + \frac{10}{27}$, where 
the upper bound can be proved via a flag algebra identity~\eqref{eq:FAMain} with $N=7$ so that
 \begin{enumerate}[(i)]
 \item\label{it:65Sigma} the co-rank of $X^\sigma$ is $2$, where
 $\sigma:=(5,\{01,12,23,30\})$ 
 is the $4$-cycle plus an isolated vertex, and
 \item\label{it:65Slack} every $7$-vertex graph $H$ with slack $c_H=0$ admits a homomorphism to~$B$.
\end{enumerate}
\item \label{it:65Unique} The vector  $\V \alpha$ is the unique (up to an automorphism of $B$) maximiser of $p(F,\blow{B}{\V x})$ in~$\I S_{12}$.
\item\label{it:65Robust} The $F$-inducibility problem is robustly $B$-stable.
\item\label{it:65Perfect} The $F$-inducibility problem is perfectly $B$-stable.
\end{enumerate}
\end{theorem}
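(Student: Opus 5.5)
The plan is to treat the four parts in order: Part~\ref{it:65Unique} is a finite optimisation, while Parts~\ref{it:65Robust}--\ref{it:65Perfect} replace the single identifying type $\tau$ of Theorem~\ref{th:PST7.1} by a two-level identification using $\sigma$.

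\emph{Part~\ref{it:65a}.} The lower bound is the value $p(F,\blow{B}{\V\alpha})$, which comes out of the computation for Part~\ref{it:65Unique}. The upper bound, together with \ref{it:65Sigma} and \ref{it:65Slack}, is a finite check of the certificate by the script, exactly as for the theorems of the previous subsection.

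\emph{Part~\ref{it:65Unique}.} The graph $F$ is $D\sqcup K_1$, where $D$ is the $5$-vertex component on $\{0,1,2,4,5\}$. Since $D$ is connected and the two copies of $C_6^*$ are disconnected from each other in $B$, every embedding of $F$ into a blowup of $B$ puts $D$ inside one copy of $C_6^*$ and the isolated vertex inside the other copy. It is convenient to complement. We have $\O D=K_2\sqcup P_3$, and a blowup of $C_6^*$ is the complement of a blowup of $K_3\sqcup K_3$, that is, of a disjoint union of two complete tripartite graphs.

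Write the parts of one copy as $y\V z$, where $y$ is the total weight of that copy and $\V z\in\I S_6$. Then the density contributed by $D$ in that copy is $y^5 g(\V z)$ for an explicit symmetric-type polynomial $g$. Counting embeddings of $K_2\sqcup P_3$ into the union of the two tripartite pieces gives this polynomial. I would show that $g$ is uniquely maximised at the uniform vector in two steps.
\begin{enumerate}[(a)]
\item Fix the masses of the two triangles $\{0,2,4\}$ and $\{1,3,5\}$ and use a Proposition~\ref{pr:F2}-type or AM--GM argument inside each triangle to force equal parts.
\item Optimise the one-variable split between the two triangles; the answer should be $1/2$.
\end{enumerate}
If this gets messy, I would verify the inequality by exact symbolic computation in the script.

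Once $g$ is handled, we get $p(F,\blow{B}{\V x})=c\,(y^5(1-y)+y(1-y)^5)$ for a constant $c$. Proposition~\ref{pr:ClaimF3} then gives $y\in\{\alpha,1-\alpha\}$, which yields the claimed formula for $\lambda_F$ and the uniqueness of $\V\alpha$. It also shows that $B$ is $F$-minimal, since all maximisers are interior to the simplex.

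\emph{Part~\ref{it:65Robust}.} Here I would follow the proof of \cite[Theorem~7.1]{PikhurkoSliacanTyros19}, but with a two-level identification, since no single type can separate all $12$ parts. First I would verify that, up to automorphisms of $B$, the homomorphisms $\sigma\to B$ place the non-adjacent pairs $\{0,2\}$ and $\{1,3\}$ of the $4$-cycle into the two triangles $\{0,2,4\}$ and $\{1,3,5\}$ of one copy of $C_6^*$. The isolated root $4$ of $\sigma$ then goes into the other copy. This gives exactly two rooted density vectors, depending on which copy contains the $4$-cycle. These vectors lie in $\ker X^\sigma$, and by \ref{it:65Sigma} they span it.

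As in \Cref{cor:convex-comb}, a typical embedding $f:\sigma\hookrightarrow G$ therefore has rooted densities close to a combination of these two vectors. I would then use the neighbourhood pattern of an outside vertex with respect to $f([5])$. That pattern decides which copy of $C_6^*$ the vertex belongs to, via adjacency to the $4$-cycle roots versus the isolated root. Within the copy containing the cycle, it also locates the vertex among the six parts. For the copy containing only the isolated root, I would use a second typical $\sigma$-embedding with the roles of the copies swapped and glue the two resulting partial partitions.

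The vertices that are misclassified, or that fall into the wrong neighbourhood classes, are counted via the slack-zero condition \ref{it:65Slack} and the usual averaging. This should give $\dedit(G,\blow{B}{})=O(n+(\lambda_F-p(F,G))n^2)$. I expect this gluing and counting to be the main obstacle: one has to make sure that the two partial partitions are consistent, and that the co-rank being $2$ rather than $1$ does not let typical embeddings mix the two kernel vectors. I would handle the mixing issue by noting that the two vectors have disjoint supports on flags whose unlabelled vertex is adjacent to root $4$, which forces the coefficients to be close to $0$ or $1$.

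\emph{Part~\ref{it:65Perfect}.} Finally, I would upgrade from robust to perfect stability as in \cite{PikhurkoSliacanTyros19}. The inputs are the $F$-minimality of $B$ and the uniqueness of $\V\alpha$ from Part~\ref{it:65Unique}. Take $G$ within $O(n)$ edits of a near-optimal blowup of $B$. It then suffices to check two local facts.
\begin{enumerate}[(a)]
\item Each vertex whose attachment deviates from its part's neighbourhood loses $\Omega(n^5)$ copies of $F$ compared with cloning a typical vertex.
\item Each wrong adjacency between correctly placed vertices costs $\Omega(n^4)$ copies.
\end{enumerate}
Both are finite computations of polynomials in the part weights at $\V\alpha$, which I would check in the script.
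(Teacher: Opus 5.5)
Your overall plan matches the paper's architecture. Part~1 comes from the certificate. One copy of $C_6^*$ is identified by a typical $\sigma$-embedding and the other by a second one, wrong pairs are controlled by the zero-slack condition, and robust stability is then upgraded to perfect stability. Proving Part~2 by direct optimisation is a legitimate alternative to the paper's argument. However, step (a) of Part~2 is false as stated. After complementing, let the parts $0,2,4$ of a copy carry weights $a$ (total $s$) and the parts $1,3,5$ carry weights $b$ (total $t$). The copy then contributes a constant times $e_2(a)\phi(b)+e_2(b)\phi(a)$, where $\phi(b)=\sum_{i\ne j}b_ib_j^2$ counts the $P_3$'s (both leaves must lie in the same part). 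The factor $\phi$ is not maximised at equal parts: $\phi(\tfrac12,\tfrac12,0)=\tfrac14>\tfrac29=\phi(\tfrac13,\tfrac13,\tfrac13)$. With $a$ uniform, replacing the uniform $b$ by $(t/2,t/2,0)$ changes the value by $s^2t^2(t-2s)/108$, which is positive whenever $t>2s$. So for unbalanced splits the within-triangle optimum is not uniform, no AM--GM argument inside one triangle can work, and the order ``(a) then (b)'' breaks down. You would need a genuinely joint argument, or a certified computation of the full $6$-variable statement (which is true). The paper avoids this calculation entirely. The two limit vectors of the $\V\alpha$-blowup span the $2$-dimensional kernel of $X^\sigma$, and both have equal coordinates on the six flags that identify the parts of $C_6^*$; hence so does every kernel vector. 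Evaluating the identity on a blowup with an arbitrary maximiser $\V x$ then forces $x_0=\cdots=x_5$ and $x_6=\cdots=x_{11}$, and Proposition~\ref{pr:ClaimF3} finishes.

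In Part~3, your resolution of the ``mixing'' issue rests on a false claim. Both kernel vectors put positive weight, namely $\tfrac23(1-\alpha)$ and $\tfrac23\alpha$, on the same flag (the unlabelled vertex adjacent to root $4$ only). So their supports are not disjoint, and nothing in the kernel pushes the coefficients towards $0$ or $1$. Mixing is in fact harmless, because the edit bound does not use the kernel at all. Every $g$-wrong pair together with $g([5])$ spans a graph with no homomorphism to $B$, so the zero-slack condition alone gives $\mathbb{E}_g|W^g|=O(\max\{n,(\lambda_F-p(F,G))n^2\})$. What your sketch leaves open are the non-degeneracy steps that make the two-level identification work.
(1) An embedding $g$ with few wrong pairs may classify only $o(n)$ vertices. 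One must show that almost every $g$ has $\Omega(n)$ classified common neighbours of $g(0),g(2)$; an unclassified common neighbour together with $g([5])$ is again not $B$-homomorphic.
(2) A second $\sigma$-embedding with its $4$-cycle in $Y:=V(G)\setminus X$ exists because $G[Y]$ must contain $\Omega(n^4)$ induced $4$-cycles. Otherwise replacing $G[Y]$ by a uniform $C_6^*$-blowup would raise $p(F,G)$ by $\Omega(1)$, and the same argument applies to $G[X]$.
(3) The vertices of $Y$ left unclassified by the second embedding are bounded by counting copies of $P_3\sqcup P_3\sqcup K_1$, which admits no homomorphism to $B$.
With these, the gluing is automatic because $G$ has at most $|W^g|$ edges between $X$ and $Y$.
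For Part~4 no local computations are needed. The paper applies Theorem~5.13 of~\cite{PikhurkoSliacanTyros19} (robust stability plus minimality of the pattern) to the complements, since that theorem is stated for loopless patterns.
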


\begin{proof}[Proof of \cref{thm:graph_65}]
Item \ref{it:65a} is verified by the provided script and certificate.

Next, we prove Item~\ref{it:65Unique}, namely that $\V\alpha$ is the unique (up to an automorphism of $B$) maximiser. While a direct proof is possible, we decided to present a calculation-free proof based on extra properties of our flag algebra certificate (as some intermediate claims will be used also in the proof of Item \ref{it:65Robust}). 

\begin{claim}\label{cl:65UniqueHom} 
\begin{enumerate}[(a)]
\item\label{it:65C4} The $4$-cycle $C_4$ admits a unique homomorphism to $C_6^*$, up to an automorphism of $C_6^*$. Moreover, its image uniquely identifies all vertices of $C_6^*$.
\item\label{it:65SigmaInB}
 The graph $\sigma$ (which is a 4-cycle plus an isolated vertex) admits a unique homomorphism to $B$, up to an automorphism of~$B$. Moreover, it maps $C_4\subseteq \sigma$ to one copy of $C_6^*$, uniquely identifying (inside $B$) all vertices of this $C_6^*$, and maps the isolated vertex to the other copy of~$C_6^*$.
 \end{enumerate}\end{claim}

\begin{claimproof} 
By looking at the graph complements in Item~\ref{it:65C4}, we have to map the matching of size 2 to the (loopless) graph $K_3\sqcup K_3$; clearly, these two edges must use two different copies of~$K_3$ and all such maps are the same up to an automorphism of the target pattern. Also, with $C_6^*$ labelled as in~\eqref{eq:65B}, the 6 vertices of $C_6^*$ have the following neighbourhoods in the set $[4]$ (which spans a $4$-cycle in $C_6^*$):
 \begin{equation}\label{eq:6Parts}
 \{0,1,3\},\ \{0,1,2\},\ \{1,2,3\},\ \{0,2,3\},\ \{1,3\},\ \{0,2\}.
 \end{equation}
 These sets are pairwise different, so every homomorphic image of $C_4$ in $C_6^*$ uniquely identifies all 6 vertices. This proves the first part.

Every homomorphism of $\sigma$ to $B$ has to map the connected subgraph $C_4\subseteq \sigma$ to a copy of $C_6^*$ in~$B$. Thus the second part follows by the first part, the fact that each set in~\eqref{eq:6Parts} is non-empty, and
the vertex-transitivity of $C_6^*$. \end{claimproof}

Take any maximiser $\V x\in\I S_{12}$ of $p(F,\blow{B}{\V x})$. Let $n\to\infty$ and let $G=\blow{B}{V_0,\ldots,V_{11}}$ be a blowup of $B$, with  $|V_j|=x_jn+O(1)$ for $x_j>0$ and $V_j=\emptyset$ for $x_j=0$. Thus $G$ is almost extremal and both sides of the flag algebra identity~\eqref{eq:FAMain} (that proves the sharp upper bound on $\lambda_F$ with $N=7$) are $o(n^7)$.

Since $C_4$ is an induced subgraph of $F$, we have that $t(C_4,G)\ge t(C_4,F)t(F,G)=\Omega(1)$, that is, the graph $G$ must have $\Omega(n^4)$ induced $4$-cycles. So assume, by applying an automorphism of $B$, that $x_0,x_1,x_2,x_3$ are all non-zero. We must have $x_6+\ldots+x_{11}>0$, say $x_6>0$, as otherwise we would not have any single copy of $F$.

Consider $\Omega(n^5)$ embeddings $f:\sigma\to G$ with $f(i)\in V_i$ for $i\in [4]$ and $f(4)\in V_6$. The corresponding vector $\V v_{(G,f)}^{\sigma,6}$, which is the same for all these $f$, must be $o(1)$-close to the kernel of $X^\sigma$ as otherwise the right-hand side of the flag algebra identity~\eqref{eq:FAMain}
 would be $\Omega(n^7)$. 
Thus the following claim implies
that $x_0=\ldots=x_5$, 
since each of the corresponding parts $V_0,\ldots,V_5$ is uniquely identified by its adjacencies to $f([4])$ by \cref{cl:65UniqueHom}.

\begin{claim}\label{cl:65Kernel}
For every vector $\V v$ in the kernel of $X^\sigma$, its $6$ coordinates that correspond to those $H\in\C F_6^\sigma$ where the intersection of the neighbourhood of the unlabelled vertex with $V(\sigma)=[5]$ is a set listed in~\eqref{eq:6Parts}, are all the same.
\end{claim}
\begin{claimproof}
 We evaluate~\eqref{eq:FAMain}  
 on growing blowups of $B$ where the part sizes are given by the vector $\V \alpha$ defined in~\eqref{eq:x}. There are two kinds of embeddings of $\sigma$, depending on which copy of $C_6^*$ contains the image of $C_4\subseteq \sigma$. They give, in the limit as $n\to\infty$, two non-zero vectors in the kernel of $X^\sigma$, which are different. (For example, the $H$-coordinates of these vectors are $1-\alpha$ and $\alpha$, where $H\in\C F_6^\sigma$ is the $\sigma$-flag in which the unlabelled vertex is isolated.) 
 Since the kernel of $X^\sigma$ has dimension 2 by Item~\ref{it:65Sigma} of the theorem, these two vectors span it. The claim follows since the 6 coordinates of interest are all equal in each vector (namely,
 are  $\alpha/6$ in the first vector and  $(1-\alpha)/6$ in the second vector).
\end{claimproof}

Next, let us show that $x_6=\ldots=x_{11}$. Recall that $x_6+\ldots+x_{11}>0$.
If $G[V_6\cup\ldots\cup V_{11}]$ contains no induced $C_4$ then all embeddings of $F$ into $G$ send the 4-cycle to the first 6 parts and the isolated vertex to the last 6 parts; however, by replacing each of $x_6,\ldots,x_{11}$ by their arithmetic mean, we add $\Omega(n^6)$ new copies of $F$, a contradiction to the maximality of~$\V x$. Thus some 4 indices in $\{6,\ldots,11\}$ that span $C_4$ in $B$ are positive and the same argument as before gives that $x_6=\ldots=x_{11}$.

We conclude that the vector $\V x$ consists of 6 entries $x/6$ followed by 6 entries $(1-x)/6$. Let $F'$ be obtained from $F$ by removing the isolated vertex. Thus $F$ is $T_{2,3}$ with an edge added to the larger part. The number of homomorphisms of $F'$ to $C_6^*$ is equal to $72$, which may be easier to see by passing to complements (when we map $T_{1,2}\sqcup K_2$ to $K_3\sqcup K_3$). Thus the homomorphism density of $F'$ in (uniform blowups of) $C_6^*$ is $72/6^5$. Therefore, by~\eqref{eq:t}, we have
 \[ 
  p(F,G)= \frac{6!}{4}\, t(F,G) =  180  \cdot \frac{72}{6^5} \paren{x^5(1-x)+(1-x)^5x}+o(1).
 \]
 By \Cref{pr:ClaimF3}, this polynomial is maximal if and only if $x\in\{\alpha,1-\alpha\}$. This implies  Item~\ref{it:65Unique}.

Next, let us prove Item \ref{it:65Robust}. Suppose on the contrary that robust $B$-stability does not hold, that is, for every real $C>0$ there is a graph $G$ violating~\eqref{eq:RobustStab}. For brevity, we will use asymptotic notation with respect to $C\to\infty$. Let $n:=v(G)$. Thus $n\ge C$ also tends to infinity and, since~\eqref{eq:RobustStab} fails, we have that $p(F,G)\ge \lambda_F-1/C=\lambda_F+o(1)$, that is, $G$ is almost extremal. By evaluating the flag algebra identity~\eqref{eq:FAMain} 
on the graph $G$ and using Item~\ref{it:65Slack}, we conclude that \begin{equation}\label{eq:65RobustH}
 p(H,G)\le O\left(\max\{1/n,\,\lambda_F-p(F,G)\}\right),\quad\mbox{for all $H\in \C F_7^0$ with no homomorphism to $B$}.
 \end{equation} 
\hide{every $7$-vertex graph $H$ with $c_H>0$ satisfies \XL{$\lambda_F(\blow{B}{\V x}) \rightarrow p(F,\blow{B}{\V x})$}
\begin{equation}\label{eq:65RobustH}
 p(H,G)\le O\left(\max\{1/n,\lambda_F-\lambda_F(G)\}\right),\quad\mbox{for every $H\in \C F_7^0$ with $c_H>0$}.
 \end{equation}
 By Item~\ref{it:65Slack}, the inequality in~\eqref{eq:65RobustH} applies to every $7$-vertex graph $H$ that does not admit a homomorphism to the pattern~$B$. }
 
 Our proof will be based on this, as well as Claim~\ref{cl:65UniqueHom}.
 Informally speaking, we will use a typical copy of $\sigma$ in $G$ to identify the first 6 parts $V_0,\ldots, V_5
\subseteq V(G)$ of a blowup of $B$ and derive by~\eqref{eq:65RobustH}  that ``most'' pairs intersecting these parts have the correct adjacency in $G$. Then we will argue that $V(G)\setminus \cup_{j=0}^5 V_j$ spans many induced copies of $C_4$ and use a typical copy of a such $4$-cycle (extended by a typical vertex of $\cup_{j=0}^5 V_j$ to form a copy of $\sigma$) to identify the remaining 6 parts. 
 
Let us provide details. For every embedding $g:\sigma\to G$, we define 6 disjoint subsets $V_0^g,\ldots,V_5^g\subseteq V(G)$ using the 6 possible attachments in~\eqref{eq:6Parts}; formally, for $j\in [6]$, we let 
\[
V_j^g:=\left\{u\in V(G)\setminus g([5])\colon g^{-1}(\Gamma_G(u))=\Gamma_{C_6^*}(j)\right\}.
\]
Also, we let $R^g:=V(G)\setminus  (g([5])\cup (\cup_{j=0}^5 V_j^g))$. (We leave $g([5])$ outside of these parts for convenience.) Let 
\[
W^g:=\left(E(G)\bigtriangleup \blow{C_6^*}{V_0^g,\ldots,V_5^g}\right) \cup G[\cup_{j=0}^5 V_j^g,R^g].
\]
 In other words, $W^g$ is defined to consist of those pairs in $V(G)\setminus g([5])$ that intersect $\cup_{j=0}^5 V_j^g$ and whose adjacencies in the graph $G$ and the blowup $\blow{(C_6^*\sqcup K_1)}{V_0^g,\ldots,V_5^g,R^g}$ mismatch. We call such pairs \emph{$g$-wrong}. 
 
 By Claim~\ref{cl:65UniqueHom}, every $g$-wrong pair $xy$ together with $g([5])$ spans a subgraph in $G$ that does not admit a homomorphism to~$B$. (In fact, we could have taken $g([4])$, the image of $C_4\subseteq \sigma$, instead of $g([5])$ in the previous statement.) Since $\sigma$ is an induced subgraph of $F$, there are $\Omega(n^5)$ embeddings of $\sigma$ into~$G$. 
  Thus, for a uniformly random embedding $g:\sigma\to G$, the expectation $\I E_g|W_g|$, by~\eqref{eq:65RobustH}, is at most 
  \begin{equation}\label{eq:65|W|}
  O\left(\max\left\{n,\,(\lambda_F-p(F,G)\right)n^2\right\}).
  \end{equation} 

Thus we can find $g$ such that $|W^g|$ is small. This alone is not enough for our forthcoming arguments since we also need exclude the case that $|\cup_{j=0}^5 V_j^g|=o(n)$ (which may happen if e.g.\ the image of $g$ happens to contain an isolated cycle in $G$) when we have not identified any significant part of the structure of $G$. With this in mind, let us show that almost every embedding $g:\sigma\to G$ satisfies that 
\begin{equation}\label{eq:GoodGamma02}
\left|(\Gamma_G(g(0))\cap \Gamma_G(g(2)))\setminus R^g\right|=\Omega(n).
\end{equation}
(Recall that $\{0,2\}$ is a diagonal of $C_4\subseteq \sigma$.)
The inequality in~\eqref{eq:GoodGamma02} can fail in two ways. First, it may happen that $\Gamma_G(g(0))\cap \Gamma_G(g(2))$ has $o(n)$ vertices. We can generate  all such embeddings $g$ by first choosing the image $\{g(0),g(2)\}$, after which we have 
 $o(n)$ choices for $g(1)$ (and also for $g(3)$). Second, the set $U:=\Gamma_G(g(0))\cap \Gamma_G(g(2))\cap R^g$ may have $\Omega(n)$ vertices. Then every vertex $v$ of this set (by not being assigned to $V_1^g\cup V_3^g\cup V_5^g$) has ``wrong'' adjacencies to the set $\{g(1),g(3)\}$.  This means that the graph induced by the 6-set $\{v\}\cup g([5])$ in $G$ does not admit a homomorphism to~$B$. Of course, this applies to every 7-vertex set containing it. In this case, $g$ generates $\Omega(n^2)$ such sets so, by~\eqref{eq:65RobustH}, we have $o(n^5)$ such embeddings, as desired.

Thus we can fix an embedding $g:\sigma\to G$ such that $|W_g|$ is at most, say, twice its expected value and~\eqref{eq:GoodGamma02} holds.
Given this (fixed) map $g$, we define $J:=\blow{C_6^*}{V_0^g,\ldots,V_5^g}$. Let $X:=V(J)=\cup_{j=0}^5 V_j^g$ and $x:=|X|/n$. By~\eqref{eq:GoodGamma02}, we have $x=\Omega(1)$. 

Let us show that $p(C_4,J)=\Omega(1)$. Suppose on the contrary that this is false. Since (up $o(n^2)$ wrong pairs) $G[X]$ is essentially $J$ and $G$ has essentially no edges between $X$ and $V(G)\setminus X$, all but $o(n^6)$ copies of $F$ in $G$ have at most one vertex from $X$. But then all such copies remain when we replace $G[X]$ by a uniform blowup of $C_6^*$ while we get $\Omega(n^6)$ new copies of $F$ (those that have exactly $5$ vertices in $X$). This contradicts the fact that $G$ is  almost extremal. 

By applying Claims~\ref{cl:65UniqueHom} and~\ref{cl:65Kernel} to the $\Omega(n^5)$ embeddings of $\sigma$ into $G$ that map $C_4\subseteq \sigma$ into $J$, we conclude that $|V_j|=(x/6+o(1))n$ for each $j\in [6]$.

Note that $1-x=\Omega(1)$ as otherwise $p(F,G)=o(1)$, a contradiction. Also, the set $Y:=V(G)\setminus X$ induces $\Omega(n^4)$ copies of $C_4$ for otherwise all but $o(n^6)$ copies of $F$ have exactly one vertex in $Y$ and, by replacing $G[Y]$ by a uniform blowup of $C_6^*$, we increase $p(F,G)$ by $\Omega(1)$, a contradiction. 

As before, we can find an embedding $f:\sigma \to G$ which maps $C_4\subseteq \sigma$ inside $Y$ so that $|\cup_{j=0}^5 V_j^{f}|=\Omega(n)$ and the size of the set $W^f$ of $f$-wrong pairs is upper bounded by~\eqref{eq:65|W|}. Additionally, by $G$ having at most $|W^g|$ edges between $Y$ and $V(G)\setminus Y=X$, we can assume that $|(\cup_{j=0}^5 V_j^f)\setminus Y|=O(\max\{1,(\lambda_F-p(F,G))n\})$. For $j\in [6]$, let $V_j:=V_j^g$ and, for $j=6,\ldots,11$, let $V_j:=V_j^f\cap Y$. Also, let $R:=Y\setminus (\cup_{j=0}^5 V_j^f)$. 

Similarly as before, the graph $J':=C_6^*(V_6,\ldots,V_{11})$ has $\Omega(n^4)$ induced $4$-cycles.
In particular, we can find $\Omega(n^3)$ embeddings $P_3\sqcup P_3\to G$ which are also embeddings into $J\sqcup J'$. If we add an isolated vertex to $P_3\sqcup P_3$ then the obtained graph $H$ does not admit a homomorphism to~$B$. Thus, by~\eqref{eq:65RobustH}, $|R|\le O(\max\{1,(\lambda_F-p(F,G))n\})$. It follows that we can make $G$ into a blowup of $B$ by changing at most
 $$
 |W^g|+\left|W^f\cap \binom{Y}{ 2}\right|+n\,|R|=O(\max\{n,(\lambda_F-p(F,G))n^2\}),
 $$
 edges. This contradicts our choice of $G$ when $C$ is sufficiently large, proving Item~\ref{it:65Robust}.

Finally, in order to prove Item~\ref{it:65Perfect}, we prove perfect $\O B$-stability for the $\O F$-inducibility problem
by applying \cite[Theorem 5.13]{PikhurkoSliacanTyros19}. Clearly, taking graph complements preserves perfect stability (and we do this so that the involved pattern has no loops and thus \cite[Theorem 5.13]{PikhurkoSliacanTyros19} can be formally applied). To derive  perfect $\O B$-stability from the theorem,
 it suffices to check that the $\O F$-inducibility problem is robustly $\O B$-stable (which clearly follows from Item~\ref{it:65Robust}) and  the pattern $\O B$ is $\lambda_{\O F}$-minimal (that is, if we delete any vertex from $\O B$ then the maximum limit density of $\O F$ in $\O B$-blowups strictly decreases). By the compactness of $\I S_{12}$, the last property is equivalent to every maximiser of $\lambda_{\O F}(\blow{\O B}{\V x})$ having no zero entries. This holds here by Item~\ref{it:65Unique} since the graph complementation preserves the set of maximisers. Thus, by applying \cite[Theorem 5.13]{PikhurkoSliacanTyros19} and taking graph complements, the $F$-inducibility problem is perfectly $B$-stable.\end{proof}

\section{Conjectures}
\label{sec:conjectures}

In this section we present the still open cases when the upper and lower bounds on $\lambda_{F}$ are very close to each other but we could not round the floating-point matrices returned by computer. It is possible that the exact value of $\lambda_{F}$ can be determined in some of these cases by the `plain' flag algebra method.

The candidate graphons are specified by pairs $(P, E)$, resulting in $W(P, E)$. The vector $P = (p_0, p_1, \ldots, p_{k-1})$ consists of positive reals summing to $1$, representing a measurable partition of the vertex space into $k$ parts $V_0, V_1, \ldots, V_{k-1}$, with the measure of $V_i$ being~$p_i$. The map $E : \{(i, j) : 0 \leq i \leq j < k\} \to [0, 1] \cup \{R\}$ assigns to each unordered pair $\{i, j\}$ the constant value of the graphon on $V_i\times V_j$ (and, by symmetry, on $V_j\times V_i$). Also, $E(i, j) = R$ is allowed only for  diagonal entries (when $i=j$) and indicates that the graphon is \emph{recursive} inside $V_i$, that is,  $V_i$ contains a scaled copy of the entire graphon. 

We systematically looked through all small candidate graphons to find the conjectures. In particular, the family we searched through contained every $W(P, E)$ with $|P| \leq 6$ and $E$ being $\{0, 1, R\}$-valued. In addition, we searched through $W(P, E)$ with $|P| \leq 4$ with off-diagonal $E$ entries in $[0, 1]$ and diagonal entries restricted to $0$ or $1$. Note that for a given linear target $\lambda$, the expression $\lambda (W(P, E))$ is a polynomial in the parameters $(P, E)$ if $E$ does not map to $R$, otherwise it is a quotient of polynomials in $(P, E)$. For each $6$-vertex graph $F$ from this section, we numerically approximated $\max_{P, E} \lambda_F (W(P, E))$ over each template using gradient descent in the parameters.

After this systematic search, we carried out an additional AI-assisted heuristic search for improved candidate constructions for the unresolved cases using ChatGPT 5.5 Pro. The AI assistant was used to suggest candidate templates and numerical parameters; every resulting construction and objective value reported below was independently verified, and this verification is included in the ancillary notebook. This refinement improved several entries in Table~\ref{tab:guess6}, in particular, it improved all constructions except $F_{35}$, $F_{39}$, $F_{40}$, $F_{41}$, $F_{54}$, and $F_{63}$. Additionally, for $F_{23}$ and $F_{67}$, the improved constructions agreed with the numerical flag algebra upper bounds, hence we moved them to the conjectures with the exact algebraic descriptions.

The resulting conjectures are summarised below, while the best guesses are in Section~\ref{sec:Concluding}.

\setlength{\tabcolsep}{3pt}
\renewcommand{\arraystretch}{2.25}

\begin{longtable}{r c c C{2.4cm} c c c}
\caption{Conjectures}\label{tab:conj6}\\
\HIDE{\hline
$i$ & $F_i$ & $\overline{F_i}$ & $W$ & $\leq \lambda$ & $\lambda \leq$ & $N$ \\
\hline
\endfirsthead
\hline
$i$ & $F_i$ & $\overline{F_i}$ & $W$ & $\leq \lambda$ & $\lambda \leq$ & $N$ \\
\hline
\endhead
\hline
\endfoot
\ConjRow{9}{0/1,0/2,0/4,0/5}{0/3,1/2,1/3,1/4,1/5,2/3,2/4,2/5,3/4,3/5,4/5}{0.175872,0.132313,0.035453,0.656362}{}{0/3/{1}, 1/2/{1}}{0.1677211390}{0.1677211390}{34003165/202736311}{N}{8}
\ConjRow{11}{0/1,0/3,0/5,1/5}{0/2,0/4,1/2,1/3,1/4,2/3,2/4,2/5,3/4,3/5,4/5}{0.492901,0.507099}{}{0/0/{1}, 0/1/{0.12115}}{0.1265831105}{0.1265831107}{66640818/526459001}{N}{7}
\ConjRow{19}{0/1,0/2,0/3,0/4,1/5}{0/5,1/2,1/3,1/4,2/3,2/4,2/5,3/4,3/5,4/5}{0.114119,0.189303,0.218049,0.083457,0.395073}{}{0/1/{1}, 1/2/{1}, 0/4/{1}, 2/3/{1}, 3/4/{1}}{0.0873819110}{0.0873819111}{327955157/3753124105}{N}{8}
\ConjRow{20}{0/1,0/2,0/4,1/2,1/5}{0/3,0/5,1/3,1/4,2/3,2/4,2/5,3/4,3/5,4/5}{0.47321,0.52679}{}{0/0/{1}, 0/1/{0.262633}}{0.0543206982}{0.0543206983}{5034053/92672833}{N}{8}
\ConjRow{22}{0/1,0/3,0/4,1/2,1/5}{0/2,0/5,1/3,1/4,2/3,2/4,2/5,3/4,3/5,4/5}{0.166681,0.166681,0.333319,0.333319}{0,1}{0/1/{1}, 1/2/{1}, 0/3/{1}}{0.0617310412}{0.0617310537}{21298978/345028583}{N}{8}
\ConjRow{23}{0/2,0/4,0/5,1/2,1/5}{0/1,0/3,1/3,1/4,2/3,2/4,2/5,3/4,3/5,4/5}{0.073032,0.073032,0.213484,0.426968,0.213484}{}{0/1/{0.832503}, 2/3/{0.796813}, 3/4/{0.796813}}{0.1116212578}{0.1116212592}{}{N}{8}
\ConjRow{29}{0/3,0/4,1/3,1/5,4/5}{0/1,0/2,0/5,1/2,1/4,2/3,2/4,2/5,3/4,3/5}{0.166667,0.166667,0.166667,0.166667,0.166667,0.166667}{0,1,2,3,4,5}{0/1/{1}, 1/2/{1}, 2/3/{1}, 3/4/{1}, 4/0/{1}}{0.0154340836}{0.0155317467}{5836020/375747823}{N}{8}
\ConjRow{34}{0/1,0/2,0/3,0/4,1/2,1/5}{0/5,1/3,1/4,2/3,2/4,2/5,3/4,3/5,4/5}{0.111112,0.222221,0.111112,0.222221,0.111112,0.222221}{0,2,4}{0/1/{1}, 0/2/{1}, 4/3/{1}, 0/4/{1}, 1/2/{1}, 2/3/{1}, 2/4/{1}, 0/5/{1}, 4/5/{1}}{0.0650311042}{0.0650311104}{23290569/358145030}{N}{8}
\ConjRow{57}{0/1,0/3,0/4,0/5,1/4,1/5,4/5}{0/2,1/2,1/3,2/3,2/4,2/5,3/4,3/5}{0.666005,0.333995}{}{0/0/{1}, 0/1/{0.126425}}{0.1297872768}{0.1297872775}{25757455/198459013}{N}{7}
\ConjRow{62}{0/1,0/3,0/4,0/5,1/2,1/3,4/5}{0/2,1/4,1/5,2/3,2/4,2/5,3/4,3/5}{0.222221,0.111112,0.222221,0.111112,0.222221,0.111112}{1,3,5}{0/1/{1}, 1/2/{1}, 2/3/{1}, 3/4/{1}, 4/5/{1}, 5/0/{1}, 2/4/{1}, 2/0/{1}, 4/0/{1}, 0/0/{1}, 2/2/{1}, 4/4/{1}}{0.0650311042}{0.0650311132}{42639211/655674014}{N}{8}
\ConjRow{67}{0/2,0/3,0/5,1/2,1/4,1/5,4/5}{0/1,0/4,1/3,2/3,2/4,2/5,3/4,3/5}{0.237293,0.237293,0.262707,0.262707}{}{0/0/{1}, 0/1/{1}, 0/3/{0.468005}, 1/2/{0.468005}, 2/3/{1}, 1/1/{1}}{0.0390894375}{0.0390894387}{}{N}{8}
\ConjRow{76}{0/1,0/3,0/4,0/5,1/3,1/4,3/5,4/5}{0/2,1/2,1/5,2/3,2/4,2/5,3/4}{0.277411,0.277411,0.277411,0.055922,0.055922,0.055922}{}{0/2/{1}, 0/1/{1}, 1/2/{1}, 3/4/{1}, 3/5/{1}, 4/5/{1}}{0.1490854569}{0.1490854570}{129569313/869094247}{N}{8}

}
\end{longtable}

\begin{conjecture}\label{conj:9}
    Let $F_{9} := (6, \{01, 02, 04, 05\})$. Then \[
    \lambda_{F_{9}} =-\frac{25}{8} \alpha^{6} - \frac{125}{48} \alpha^{4} + \frac{425}{288} \alpha^{2} + \frac{95}{576},
    \] with the optimum attained at the graphon $W(P, E)$ where \[
    P = \left( \beta \gamma, \beta (1-\gamma), (1-\beta)\gamma, (1-\beta)(1-\gamma) \right) 
    \] and $E(0,1)=1, E(2,3)=1$, otherwise zero. Here 
    \begin{align*}
        \beta &= \frac{135}{8} \alpha^{7} + \frac{369}{16} \alpha^{5} + \frac{225}{32} \alpha^{3} - \frac{489}{64} \alpha + \frac{1}{2}, \\ 
        \gamma &= \frac{135}{8} \alpha^{7} + \frac{369}{16} \alpha^{5} + \frac{225}{32} \alpha^{3} - \frac{425}{64} \alpha + \frac{1}{2}
    \end{align*} and $\alpha$ is the root around $\longreal{0.043559135202965838704769}$ of the polynomial $$0 = 1296 \alpha^{8} + 1728 \alpha^{6} + 504 \alpha^{4} - 528 \alpha^{2} + 1.$$
\end{conjecture}

\begin{conjecture}\label{conj:11}
    Let $F_{11} := (6, \{01, 03, 05, 15\})$. Then \begin{align*}
        \lambda_{F_{11}} = & -\frac{1762538900422265}{308668025241} \alpha^{4} - \frac{26188848008374735}{617336050482} \alpha^{3} \\ & + \frac{11676184155575045}{308668025241} \alpha^{2} - \frac{2304679050460405}{205778683494} \alpha +   \frac{3253746756658615}{3704016302892},
    \end{align*}
    with the optimum attained at the graphon $W(P, E)$ where $P = \left( \beta, 1-\beta\right)$ and $E(0,0)=1, E(0,1)=\alpha$, otherwise zero. Here 
    \begin{align*}
        \beta &= \frac{9}{361} \alpha^{4} - \frac{49}{722} \alpha^{3} + \frac{113}{361} \alpha^{2} - \frac{278}{361} \alpha + \frac{210}{361}, \\ 
        0 &= \alpha^{5} - \frac{13}{18} \alpha^{4} + \frac{64}{9} \alpha^{3} - \frac{52}{9} \alpha^{2} + \frac{5}{3} \alpha - \frac{7}{54}
    \end{align*} and $\alpha$ is around $\longreal{0.12115013822476591245544}$.
\end{conjecture}

\begin{conjecture} \label{conj:19}
    Let $F_{19} := (6, \{01, 02, 03, 04, 15\})$. Then $\lambda_{F_{19}} = \alpha \approx 0.087381910951487916660924,$ with the optimum attained at the graphon $W(P, E)$, where 
    \[
    \begin{aligned}
    P &= (\beta_0, \beta_1, \beta_2, \beta_3, \beta_4) \\
      &\approx (\longreal{0.39507240031226174845617}, \longreal{0.11411929017788614010104},\longreal{0.18930253359278366699069}, \longreal{0.21804932261115469857164},\longreal{0.08345645330591374588046}),
    \end{aligned}
    \] and $E(0,1)=1, E(1,2)=1, E(2,3)=1, E(3,4)=1, E(4,0)=1$, otherwise zero. Here $\beta_i$ denotes the unique maximiser, up to the natural symmetries of \[ \sum_{i\in \I Z/5\I Z}
    \beta_i \beta_{i+1} \beta_{i+2} \beta_{i+3} \left( \beta_{i}^2 + \beta_{i+3}^2 \right)
    \] under the constraint $\beta_i \geq 0$ and $\sum_i \beta_i = 1$.
\end{conjecture}
Note that in Conjecture~\ref{conj:19}, the $\alpha, \beta_i$ values are algebraic, however we do not record their exact expressions as the smallest description we found places them in the splitting field of a degree-$221$ polynomial.

\begin{conjecture}\label{conj:20}
    Let $F_{20} := (6, \{01, 02, 04, 12, 15\})$. Then \begin{align*}
        \lambda_{F_{20}} = & -\frac{397582286503493}{168915385692} \alpha^{4} + \frac{56388167496659}{18768376188} \alpha^{3}  \\ & - \frac{717902501474065}{506746157076} \alpha^{2} + \frac{47085600013265}{168915385692} \alpha -  \frac{2364666553933}{126686539269},
    \end{align*}
    with the optimum attained at the graphon $W(P, E)$ where $P = \left( \beta, (1-\beta)\right)$ and $E(0,0)=1, E(0,1)=\alpha$, otherwise zero. Here 
    \begin{align*}
        \beta &= \frac{9}{460} \alpha^{4} - \frac{43}{460} \alpha^{3} + \frac{31}{92} \alpha^{2} - \frac{223}{276} \alpha + \frac{229}{345}, \\ 
        0 &= 27 \alpha^{5} - 75 \alpha^{4} + 207 \alpha^{3} - 185 \alpha^{2} + 66 \alpha - 8
    \end{align*} and $\alpha$ is around $\longreal{0.26263329690919386542210}$.
\end{conjecture}

\begin{conjecture}\label{conj:22}
    Let $F_{22} := (6, \{01, 03, 04, 12, 15\})$. 
    Then $\lambda_{F_{22}} = \frac{1440}{23327},$ with the optimum attained at the graphon $W(P, E)$ where 
    $P = \left( \frac{1}{6}, \frac{1}{6}, \frac{1}{3}, \frac{1}{3} \right) $ and 
    $E(0, 1) = 1, E(0, 2) = 1, E(1, 3) = 1, E(0, 0) = R, E(1, 1) = R$, otherwise zero. 
\end{conjecture}

\begin{conjecture}\label{conj:23}
    Let $F_{23} := (6, \{02, 04, 05, 12, 15\})$. 
    Then $\lambda_{F_{23}} \approx \longreal{0.111621257872924},$ with the optimum attained at the graphon $W(P,E)$ where
    \[
        P = (\alpha,\alpha,\beta,2\beta,\beta)
        \approx \left(
        \longreal{0.07303173354167806715739804},
        \longreal{0.07303173354167806715739804},
        \longreal{0.21348413322916096642130098},
        \longreal{0.42696826645832193284260196},
        \longreal{0.21348413322916096642130098}
        \right),
    \]
    and
    \[
        E(0,1)=\gamma,\qquad E(2,3)=\delta,\qquad E(3,4)=\delta,
    \]
    otherwise zero. Here
    \[
        2\alpha+4\beta=1,\qquad
        \alpha \approx\longreal{0.07303173354167806715739804},\qquad
        \gamma \approx\longreal{0.8325025367933598053297472},\qquad
        \delta \approx\longreal{0.7968129023368565201152813}.
    \]
    The values $\alpha,\gamma,\delta$ are algebraic; their smallest description is in a splitting field of a degree-$60$ polynomial.
\end{conjecture}

\begin{conjecture}\label{conj:29}
    Let $F_{29} := (6, \{03, 04, 13, 15, 45\})$. 
    Then $\lambda_{F_{29}} = \frac{24}{1555},$ with the optimum attained at the graphon $W(P, E)$ where 
    $P = \left( \frac{1}{6}, \frac{1}{6}, \frac{1}{6}, \frac{1}{6}, \frac{1}{6}, \frac{1}{6} \right) $ and 
    $E(0, 1) = 1, E(0, 3) = 1, E(1, 2) = 1, E(2, 5) = 1, E(3, 5) = 1, E(0, 0) = R, E(1, 1) = R, E(2, 2) = R, E(3, 3) = R, E(4, 4) = R, E(5, 5) = R$, otherwise zero. 
\end{conjecture}

\begin{conjecture}\label{conj:34}
    Let $F_{34} := (6, \{01, 02, 03, 04, 12, 15\})$. 
    Then $\lambda_{F_{34}} = \frac{5760}{88573},$ with the optimum attained at the graphon $W(P, E)$ where 
    $P = \left( \frac{1}{9}, \frac{2}{9}, \frac{1}{9}, \frac{2}{9}, \frac{1}{9}, \frac{2}{9} \right) $ and 
    $E(0, 1) = 1, E(0, 2) = 1, E(0, 3) = 1, E(0, 4) = 1, E(1, 4) = 1, E(2, 3) = 1, E(2, 4) = 1, E(2, 5) = 1, E(4, 5) = 1, E(0, 0) = R, E(2, 2) = R, E(4, 4) = R$, otherwise zero. 
\end{conjecture}

\hide{
\begin{conjecture}\label{conj:43}
    Let $F_{43} := (6, \{01, 02, 05, 13, 15, 45\})$. 
    Then $\lambda_{F_{43}} = \frac{24}{1555},$ with the optimum attained at the graphon $W(P, E)$ where 
    $P = \left( \frac{1}{6}, \frac{1}{6}, \frac{1}{6}, \frac{1}{6}, \frac{1}{6}, \frac{1}{6} \right) $ and 
    $E(0, 2) = 1, E(1, 2) = 1, E(1, 3) = 1, E(1, 5) = 1, E(2, 3) = 1, E(3, 4) = 1, E(0, 0) = R, E(1, 1) = R, E(2, 2) = R, E(3, 3) = R, E(4, 4) = R, E(5, 5) = R$, otherwise zero. 
\end{conjecture}

We also include (for completeness) the special case $m=6$ of the well-known conjecture of 
Pippenger and Golumbic~\cite{PippengerGolumbic75} that, for every $m\ge 5$, the inducibility constant of the $m$-cycle is $m!/(m^m-m)$; see~\cite{BaloghHuLidickyPfender16,HefetzTyomkyn18,KralNorinVolec19} for partial results towards the general conjecture.

\begin{conjecture}[Pippenger and Golumbic~\cite{PippengerGolumbic75}]\label{conj:51}
    Let $F_{51} := (6, \{02, 04, 12, 13, 35, 45\})$. 
    Then $\lambda_{F_{51}} = \frac{24}{1555},$ with the optimum attained at the graphon $W(P, E)$ where 
    $P = \left( \frac{1}{6}, \frac{1}{6}, \frac{1}{6}, \frac{1}{6}, \frac{1}{6}, \frac{1}{6} \right) $ and 
    $E(0, 3) = 1, E(0, 4) = 1, E(1, 2) = 1, E(1, 5) = 1, E(2, 4) = 1, E(3, 5) = 1, E(0, 0) = R, E(1, 1) = R, E(2, 2) = R, E(3, 3) = R, E(4, 4) = R, E(5, 5) = R$, otherwise zero. 
\end{conjecture}
}

\begin{conjecture}\label{conj:57}
    Let $F_{57} := (6, \{01, 03, 04, 05, 14, 15, 45\})$. Then \begin{align*}
        \lambda_{F_{57}} = & -\frac{36031628837696693165}{198512458926872706} \alpha^{4} + \frac{97077727483421103955}{297768688390309059} \alpha^{3} \\ & - \frac{505067184288007657625}{2382149507122472472} \alpha^{2} + \frac{4815168475066367665}{85076768111516874} \alpha -  \frac{10122272828133955255}{2382149507122472472},
    \end{align*}
    with the optimum attained at the graphon $W(P, E)$ where $P = \left( \beta, (1-\beta)\right)$ and $E(0,0)=1, E(0,1)=\alpha$, otherwise zero. Here 
    \begin{align*}
        \beta &= -\frac{27}{1771} \alpha^{4} - \frac{151}{3542} \alpha^{3} - \frac{1607}{10626} \alpha^{2} - \frac{320}{759} \alpha + \frac{3835}{5313} \\ 
        0 &= 162 \alpha^{5} - 33 \alpha^{4} + 248 \alpha^{3} - 341 \alpha^{2} + 142 \alpha - 13
    \end{align*} and $\alpha$ is around $\longreal{0.12642524715750795560578}$.
\end{conjecture}

\begin{conjecture}\label{conj:62}
    Let $F_{62} := (6, \{01, 03, 04, 05, 12, 13, 45\})$. 
    Then $\lambda_{F_{62}} = \frac{5760}{88573},$ with the optimum attained at the graphon $W(P, E)$ where 
    $P = \left( \frac{2}{9}, \frac{1}{9}, \frac{1}{9}, \frac{2}{9}, \frac{2}{9}, \frac{1}{9} \right) $ and 
    $E(0, 1) = 1, E(0, 5) = 1, E(1, 2) = 1, E(1, 4) = 1, E(1, 5) = 1, E(2, 3) = 1, E(2, 4) = 1, E(2, 5) = 1, E(3, 5) = 1, E(0, 0) = 1, E(1, 1) = R, E(2, 2) = R, E(3, 3) = 1, E(4, 4) = 1, E(5, 5) = R$, otherwise zero. 
\end{conjecture}

\begin{conjecture}\label{conj:67}
    Let $F_{67} := (6, \{02, 03, 05, 12, 14, 15, 45\})$. Then
    \[
        \lambda_{F_{67}}
        =
        -\frac{3635}{5832}\alpha^{4}
        + \frac{124565}{46656}\alpha^{3}
        - \frac{154295}{46656}\alpha^{2}
        + \frac{13165}{11664}\alpha
        - \frac{25}{2916},
    \]
    with the optimum attained at the graphon $W(P,E)$ where
    \[
        P=\left(\beta,\beta,\frac{1}{2}-\beta,\frac{1}{2}-\beta\right)
    \]
    and
    \[
        E(0,0)=1,\quad E(0,1)=1,\quad E(1,1)=1,\quad E(2,3)=1,\quad
        E(0,3)=\alpha,\quad E(1,2)=\alpha,
    \]
    otherwise zero. Here
    \[
        \beta
        =
        \frac{7}{12}\alpha^{4}
        - \frac{79}{36}\alpha^{3}
        + \frac{283}{36}\alpha^{2}
        - \frac{775}{72}\alpha
        + \frac{15}{4},
    \]
    and $\alpha$ is the root around $\longreal{0.46800543305264288739873}$ of the polynomial
    \[
        0
        =
        \alpha^{5}
        - \frac{13}{3}\alpha^{4}
        + \frac{47}{3}\alpha^{3}
        - \frac{157}{6}\alpha^{2}
        + \frac{49}{3}\alpha
        - \frac{10}{3}.
    \]
    Numerically,
    \[
        \lambda_{F_{67}}\approx\longreal{0.039089437510310693712390},
        \qquad
        \beta\approx\longreal{0.23729269561009103376264}.
    \]
\end{conjecture}

\begin{conjecture}\label{conj:76}
    Let $F_{76} := (6, \{01, 03, 04, 05, 13, 14, 35, 45\})$. Then \[
    \lambda_{F_{76}} =\frac{200}{81} \alpha^{2} - \frac{200}{81} \alpha + \frac{40}{81},
    \] with the optimum attained at the graphon $W(P, E)$ where \[
    P = \left( \alpha/3, \alpha/3, \alpha/3, (1-\alpha)/3, (1-\alpha)/3, (1-\alpha)/3 \right) 
    \] and $E(0,1)=1, E(1,2)=1, E(2,0)=1, E(3,4)=1, E(4,5)=1, E(5,3)=1$, otherwise zero. Here $\alpha$ is the root around $\longreal{0.16776573020222127904066}$ of the polynomial
    \[
        0 = \alpha^{4} - 2 \alpha^{3} + \frac{7}{3} \alpha^{2} - \frac{4}{3} \alpha + \frac{1}{6}.
    \]
\end{conjecture}

\section{Concluding Remarks}
\label{sec:Concluding}

Here we present the numerical upper and lower bounds that we have in the remaining 27 cases where the value of $\lambda_{F_i}$ is still unknown. We just summarise them in Table~\ref{tab:guess6}. The stated upper bounds are the values returned by the \texttt{csdp} solver for the `plain' flag algebra SDP with $N=8$.
One should be able to turn the obtained floating-point matrices into mathematical proofs with small loss in the bound. However, we do not do this since we do not expect this to provide any insights.

For the lower bounds, we use the same search as described in Section~\ref{sec:conjectures}, without doing any heuristic or other partial search in a larger set of graphons. So it is possible that some of these bounds can be easily improved.

Fox, Huang and Lee~\cite{FoxHuangLee15} and, independently, Yuster~\cite{Yuster19} proved that almost every $F$ of order $\kappa\to\infty$ is a \emph{fractalizer}, that is, the value of $\lambda_F$ is attained by iterated uniform blowups of $F$, where we repeat recursion inside each part. The known values for $F_{43}$ and $F_{51}$ (and therefore also for their complements) are attained in this way. The results collected here show that, excluding the trivial case of $\O F_0=K_6$, the only further complementary pair that could be a fractalizer is the pair indexed by $i=29$. Thus there are between $4$ and $6$ non-trivial fractalizers with $6$ vertices. It is not surprising that the value $\kappa=6$ is too small to indicate the general trend as $\kappa\to\infty$.

The flag algebra method can also be applied to the inducibility of graphs with at most $8$ vertices but we have not done any calculations in that direction.

\begin{longtable}{r c c C{2.4cm} c c c}
\caption{The remaining cases where the inducibility constant is unknown}\label{tab:guess6}\\
\HIDE{\hline
$i$ & $F_i$ & $\overline{F_i}$ & $W$ & $\leq \lambda$ & $\lambda \leq$ & $N$ \\
\hline
\endfirsthead
\hline
$i$ & $F_i$ & $\overline{F_i}$ & $W$ & $\leq \lambda$ & $\lambda \leq$ & $N$ \\
\hline
\endhead
\hline
\endfoot
\GuessRow{5}{0/1,0/4,1/5}{0/2,0/3,0/5,1/2,1/3,1/4,2/3,2/4,2/5,3/4,3/5,4/5}{0.069146,0.232714,0.232714,0.232714,0.232713}{}{0/0/{0.303462}, 0/1/{0.186078}, 0/2/{0.186072}, 0/3/{0.186072}, 0/4/{0.186066}, 1/2/{0.171807}, 1/3/{0.171807}, 1/4/{0.442063}, 2/3/{0.442063}, 2/4/{0.171807}, 3/4/{0.171807}}{0.0999820004}{0.1121327653}{}{N}{8}
\GuessRow{10}{0/1,0/2,0/4,1/5}{0/3,0/5,1/2,1/3,1/4,2/3,2/4,2/5,3/4,3/5,4/5}{0.277092,0.060399,0.222908,0.277092,0.162510}{}{0/2/{0.551926}, 1/2/{1}, 1/3/{0.551926}, 2/4/{1}, 3/4/{0.551927}}{0.0856553997}{0.1026174578}{}{N}{8}
\GuessRow{16}{0/1,0/2,1/3,4/5}{0/3,0/4,0/5,1/2,1/4,1/5,2/3,2/4,2/5,3/4,3/5}{0.133585,0.133585,0.332077,0.133585,0.133585,0.133585}{}{0/0/{0.508964}, 0/3/{1}, 0/4/{1}, 1/1/{0.508986}, 1/4/{1}, 1/5/{1}, 2/2/{1}, 3/3/{0.508908}, 3/5/{1}, 4/4/{0.508923}, 5/5/{0.508984}}{0.0639429360}{0.0685473237}{}{N}{8}
\GuessRow{18}{0/1,0/2,0/4,0/5,1/5}{0/3,1/2,1/3,1/4,2/3,2/4,2/5,3/4,3/5,4/5}{0.165053,0.172986,0.165053,0.166803,0.165053,0.165053}{}{0/0/{1}, 0/1/{1}, 1/1/{0.166807}, 1/2/{1}, 1/4/{1}, 1/5/{1}, 2/2/{1}, 3/3/{0.498663}, 4/4/{1}, 5/5/{1}}{0.0937184727}{0.0966144422}{}{N}{8}
\GuessRow{25}{0/1,0/4,0/5,1/3,4/5}{0/2,0/3,1/2,1/4,1/5,2/3,2/4,2/5,3/4,3/5}{0.333333,0.093699,0.333333,0.239635}{}{0/0/{1}, 0/1/{0.090909}, 0/2/{0.090909}, 0/3/{0.090909}, 1/1/{1}, 1/2/{0.090908}, 1/3/{1}, 2/2/{1}, 2/3/{0.090909}, 3/3/{1}}{0.1038500443}{0.1061720736}{}{N}{8}
\GuessRow{26}{0/1,0/2,0/5,1/3,4/5}{0/3,0/4,1/2,1/4,1/5,2/3,2/4,2/5,3/4,3/5}{0.205250,0.205250,0.303497,0.080752,0.205250}{}{0/0/{1}, 0/2/{0.471923}, 1/1/{1}, 1/2/{0.471921}, 2/3/{1}, 2/4/{0.471924}, 3/3/{0.000511}, 4/4/{1}}{0.0293174175}{0.0432004682}{}{N}{8}
\GuessRow{30}{0/2,0/4,1/3,1/5,4/5}{0/1,0/3,0/5,1/2,1/4,2/3,2/4,2/5,3/4,3/5}{0.241941,0.307071,0.129020,0.321969}{}{0/0/{1}, 0/3/{0.534133}, 1/2/{1}, 1/3/{0.450686}, 2/2/{0.202405}, 3/3/{0.171403}}{0.0284858934}{0.0432388287}{}{N}{8}
\GuessRow{33}{0/1,0/2,0/4,0/5,1/2,1/5}{0/3,1/3,1/4,2/3,2/4,2/5,3/4,3/5,4/5}{0.164448,0.354270,0.373633,0.107649}{}{0/1/{1}, 1/1/{1}, 1/2/{0.582889}, 3/3/{0.599620}}{0.0688329004}{0.0759651323}{}{N}{8}
\GuessRow{35}{0/2,0/4,0/5,1/2,1/3,1/5}{0/1,0/3,1/4,2/3,2/4,2/5,3/4,3/5,4/5}{0.5,0.5}{}{0/1/{0.750000}}{0.0625705719}{0.0668123578}{}{N}{8}
\GuessRow{39}{0/1,0/3,0/4,0/5,1/3,4/5}{0/2,1/2,1/4,1/5,2/3,2/4,2/5,3/4,3/5}{0.08805,0.328608,0.08805,0.328608,0.166685}{4}{0/1/{1}, 0/3/{1}, 1/2/{1}, 2/3/{1}, 0/0/{1}, 1/1/{1}, 2/2/{1}, 3/3/{1}}{0.0627947169}{0.0628954465}{}{N}{8}
\GuessRow{40}{0/1,0/2,0/4,0/5,1/3,4/5}{0/3,1/2,1/4,1/5,2/3,2/4,2/5,3/4,3/5}{0.166915,0.166404,0.166404,0.166915,0.166681,0.166681}{4,5}{0/5/{1}, 1/4/{1}, 2/4/{1}, 3/5/{1}, 4/5/{1}, 0/0/{1}, 1/1/{1}, 2/2/{1}, 3/3/{1}}{0.0617310419}{0.0655364347}{}{N}{8}
\GuessRow{41}{0/1,0/2,0/5,1/4,1/5,4/5}{0/3,0/4,1/2,1/3,2/3,2/4,2/5,3/4,3/5}{0.166663,0.166663,0.166663,0.166663,0.166663,0.166685}{5}{0/1/{1}, 0/2/{1}, 1/4/{1}, 2/3/{1}, 3/4/{1}, 0/0/{1}, 1/1/{1}, 2/2/{1}, 3/3/{1}, 4/4/{1}}{0.0771621481}{0.0772859011}{}{N}{8}
\GuessRow{42}{0/1,0/2,0/3,1/4,1/5,4/5}{0/4,0/5,1/2,1/3,2/3,2/4,2/5,3/4,3/5}{0.171738,0.135070,0.184589,0.104069,0.324730,0.079806}{}{0/0/{1}, 0/1/{1}, 0/2/{1}, 0/5/{1}, 1/1/{1}, 1/2/{1}, 1/3/{0.530157}, 2/2/{1}, 2/3/{0.530105}, 3/4/{1}, 4/5/{1}}{0.0646104850}{0.0657267553}{}{N}{8}
\GuessRow{44}{0/1,0/3,0/4,1/3,1/5,4/5}{0/2,0/5,1/2,1/4,2/3,2/4,2/5,3/4,3/5}{0.167622,0.416189,0.416189}{}{0/0/{0.599385}, 1/1/{1}, 1/2/{0.333334}, 2/2/{1}}{0.0331231529}{0.0408582489}{}{N}{8}
\GuessRow{45}{0/1,0/2,0/4,1/3,1/5,4/5}{0/3,0/5,1/2,1/4,2/3,2/4,2/5,3/4,3/5}{0.404949,0.095052,0.095052,0.404947}{}{0/2/{1}, 0/3/{0.488153}, 1/1/{0.000436}, 1/2/{1}, 1/3/{1}, 2/2/{0.009550}}{0.0404606484}{0.0481709303}{}{N}{8}
\GuessRow{46}{0/1,0/2,0/3,1/3,1/5,4/5}{0/4,0/5,1/2,1/4,2/3,2/4,2/5,3/4,3/5}{0.283329,0.135311,0.135310,0.223026,0.223025}{}{0/0/{1}, 0/1/{0.451660}, 0/2/{0.451660}, 0/3/{0.043754}, 0/4/{0.043752}, 1/1/{1}, 1/4/{1}, 2/2/{1}, 2/3/{1}, 3/3/{1}, 3/4/{0.437784}, 4/4/{1}}{0.0405785145}{0.0602613357}{}{N}{8}
\GuessRow{47}{0/2,0/3,0/4,1/3,1/5,4/5}{0/1,0/5,1/2,1/4,2/3,2/4,2/5,3/4,3/5}{0.208333,0.166667,0.208334,0.208333,0.208333}{}{0/1/{1}, 0/3/{0.666666}, 1/1/{0.008960}, 1/2/{1}, 2/4/{0.666667}, 3/4/{1}}{0.0279081647}{0.0432004546}{}{N}{8}
\GuessRow{48}{0/1,0/2,0/3,1/2,1/3,4/5}{0/4,0/5,1/4,1/5,2/3,2/4,2/5,3/4,3/5}{0.395589,0.120882,0.120882,0.120882,0.120882,0.120882}{}{0/0/{0.903422}, 1/2/{1}, 1/3/{1}, 1/4/{1}, 1/5/{1}, 2/3/{1}, 2/4/{1}, 2/5/{1}, 3/4/{1}, 3/5/{1}, 4/5/{1}}{0.2004503964}{0.2178297373}{}{N}{8}
\GuessRow{52}{0/1,0/3,0/4,0/5,1/3,1/4,1/5,3/5,4/5}{0/2,1/2,2/3,2/4,2/5,3/4}{0.302583,0.132465,0.167555,0.132465,0.132466,0.132465}{}{0/0/{0.829766}, 0/1/{1}, 0/3/{1}, 0/4/{1}, 0/5/{1}, 1/3/{1}, 1/4/{1}, 1/5/{1}, 2/2/{0.900000}, 3/4/{1}, 3/5/{1}, 4/5/{1}}{0.1917521577}{0.2067706555}{}{N}{8}
\GuessRow{54}{0/1,0/2,0/4,0/5,1/2,1/3,1/5}{0/3,1/4,2/3,2/4,2/5,3/4,3/5,4/5}{0.338717,0.408615,0.252668}{}{0/0/{1}, 0/1/{0.738941}, 0/2/{0.738941}}{0.0452784522}{0.0469245022}{}{N}{8}
\GuessRow{59}{0/1,0/3,0/4,0/5,1/3,1/5,4/5}{0/2,1/2,1/4,2/3,2/4,2/5,3/4,3/5}{0.165574,0.278141,0.278150,0.278135}{}{0/0/{0.699830}, 1/1/{0.833137}, 1/2/{0.628554}, 1/3/{0.628553}, 2/2/{0.833128}, 2/3/{0.628554}, 3/3/{0.833140}}{0.0544309552}{0.0582381881}{}{N}{8}
\GuessRow{60}{0/1,0/2,0/4,0/5,1/3,1/5,4/5}{0/3,1/2,1/4,2/3,2/4,2/5,3/4,3/5}{0.106434,0.249998,0.250003,0.249997,0.143568}{}{0/0/{1}, 0/2/{0.798951}, 0/3/{1}, 0/4/{1}, 1/3/{0.798960}, 2/4/{0.798957}, 3/3/{1}, 3/4/{1}, 4/4/{1}}{0.0288465597}{0.0311475322}{}{N}{8}
\GuessRow{61}{0/1,0/2,0/3,0/4,1/3,1/5,4/5}{0/5,1/2,1/4,2/3,2/4,2/5,3/4,3/5}{0.223458,0.294181,0.129452,0.223458,0.129451}{}{0/1/{0.244741}, 0/2/{1}, 0/3/{1}, 0/4/{0.156048}, 1/2/{1}, 1/3/{0.244741}, 1/4/{1}, 2/3/{0.156048}, 2/4/{0.070968}, 3/4/{1}}{0.0371530157}{0.0511213761}{}{N}{8}
\GuessRow{63}{0/1,0/2,0/3,0/5,1/2,1/3,4/5}{0/4,1/4,1/5,2/3,2/4,2/5,3/4,3/5}{0.141435,0.141435,0.141435,0.217129,0.141435,0.217129}{}{0/1/{1}, 0/2/{1}, 0/3/{1}, 1/4/{1}, 1/5/{1}, 2/3/{1}, 2/4/{1}, 4/5/{1}}{0.0688716894}{0.0905122900}{}{N}{8}
\GuessRow{66}{0/1,0/2,0/4,0/5,1/2,1/4,1/5,4/5}{0/3,1/3,2/3,2/4,2/5,3/4,3/5}{0.180582,0.180582,0.110213,0.180582,0.180582,0.167458}{}{0/0/{1}, 0/2/{1}, 0/3/{1}, 0/4/{1}, 1/1/{1}, 1/2/{1}, 1/3/{1}, 1/4/{1}, 2/2/{1}, 2/3/{1}, 2/4/{1}, 3/3/{1}, 4/4/{1}, 5/5/{0.799974}}{0.1115873437}{0.1120734209}{}{N}{8}
\GuessRow{73}{0/1,0/2,0/4,1/2,1/3,3/5,4/5}{0/3,0/5,1/4,1/5,2/3,2/4,2/5,3/4}{0.166667,0.208334,0.208333,0.208333,0.208333}{}{0/0/{0.927668}, 0/3/{1}, 0/4/{1}, 1/1/{1}, 1/2/{1}, 1/3/{0.666668}, 2/2/{1}, 2/4/{0.666666}, 3/3/{1}, 4/4/{1}}{0.0279081647}{0.0289172242}{}{N}{8}
\GuessRow{74}{0/1,0/3,0/4,1/2,1/3,3/5,4/5}{0/2,0/5,1/4,1/5,2/3,2/4,2/5,3/4}{0.166666,0.249999,0.250001,0.166667,0.166667}{}{0/0/{0.999759}, 0/1/{1}, 1/2/{0.499997}, 1/4/{1}, 2/3/{1}, 2/4/{1}, 3/3/{0.003092}, 4/4/{0.194182}}{0.0260416809}{0.0487738848}{}{N}{8}

}
\end{longtable}

\section*{Acknowledgements}
We thank Yixiao Zhang for pointing out the known results on the inducibility of $C_6$ and the net graph.

\section*{Declaration on the use of AI}

ChatGPT Pro 5.5 was used to assist in proofreading and to search for candidate graphon constructions for the remaining unsolved cases. In particular, it improved several entries in Table~\ref{tab:guess6} and contributed the conjectured constructions for $F_{23}$ and $F_{67}$. All mathematical arguments, results, and conclusions were written and verified by the authors, or by a deterministic computer program included in the ancillary files.


\providecommand{\bysame}{\leavevmode\hbox to3em{\hrulefill}\thinspace}

\end{document}